\newcommand\tetr{\mathcal{T}}
\newcommand\bonds{\mathrm{Bo}}
\newcommand\circu{\mathrm{circ}}
\newcommand\Span{\mathrm{span}}
\newcommand\interior{\mathrm{int}}
\newcommand\clust{\mathrm{Cl}}
\newcommand\cyl{T}
\newcommand\conv{\mathop{\mathrm{conv}}}
\newcommand\psiC{\psi_\C}
\newcommand{\discxi}{{{\zeta}}}
\newcommand{\discbeta}{{{\xi}}}
\newcommand{\contbeta}{{{\beta}}}
\newcommand{\conteta}{{{\eta}}}
\newcommand{\burgersbi}[1]{{b_{#1}}}
\newcommand\Ce{\mathrm{Core}_\eps}
\renewcommand\ln{\log}
\newcommand\pl{\mathrm{pl}}
\newcommand\el{\mathrm{el}}
\newcommand\contbetapl{\contbeta^\pl}
\newcommand\discbetapl{\discbeta^\pl}
\newcommand\discbetael{\discbeta^\el}
\newcommand\R{\mathbb{R}}
\newcommand\C{\mathbb{C}}
\newcommand\Z{\mathbb{Z}}
\newcommand\N{\mathbb{N}}
\newcommand\calT{\mathcal{T}}
\newcommand\calH{\mathcal{H}}
\newcommand\calB{\mathcal{B}}
\newcommand\calM{\mathcal{M}}
\newcommand\calN{\mathcal{N}}
\newcommand\calL{\mathcal{L}}
\newcommand\calA{\mathcal{A}}
\newcommand\calC{\mathcal{C}}
\newcommand\weakstarto{{\displaystyle\mathop{\rightharpoonup}^*}}
\newcommand\weakto{\mathop{\rightharpoonup}}
\newcommand\e{\varepsilon}
\newcommand\supp{\mathop{\mathrm{supp}}}
\newcommand\Div{\mathop{\mathrm{div}}}
\newcommand\Curl{\mathop{\mathrm{curl}}}
\newcommand\curl{\mathop{\mathrm{curl}}}
\newcommand\dist{{\mathrm{dist}}}
\newcommand\rel{{\mathrm{rel}}}
\newcommand\Id{{\mathrm{Id}}}
\newcommand\SO{{\mathrm{SO}}}
\newcommand\sym{{\mathrm{sym}}}
\newcommand\skw{{\mathrm{skew}}}
\newcommand\loc{{\mathrm{loc}}}
\newcommand\eps{{\varepsilon}}
\newcommand\Ade{{\calA_\eps^{\mathrm d}}}
\newcommand\Adek{{\calA_{\eps_k}^{\mathrm d}}}
\newcommand\diam{\mathop{\mathrm{diam}}}
\newcommand{\LM}[1]{\hbox{\vrule width.2pt \vbox to#1pt{\vfill \hrule width#1pt height.2pt}}}
\newcommand{\vertici}{\mathop{\mathrm{vert}}}
\newcommand{\edges}{\mathop{\mathrm{edges}}}
\newcommand\EC{E_\calC}
\newcommand{\LL}{{\mathchoice
{\,\LM7\,}{\,\LM7\,}{\,\LM5\,}{\,\LM{3.35}\,}}}
\newtheorem{theorem}{Theorem}[section]
\newtheorem{lemma}[theorem]{Lemma}
\newtheorem{definition}[theorem]{Definition}
\newtheorem{remark}[theorem]{Remark}
\newtheorem{proposition}[theorem]{Proposition}
\newtheorem{example}[theorem]{Example}
\numberwithin{equation}{section}
\newcommand\ccorerad{C_\mathrm{core}}  \renewcommand\ccorerad{k_*}  
\begin{document}
\begin{center}
    {\LARGE  A discrete crystal model in three dimensions:\\
    the line-tension limit for dislocations} \\[5mm]
{\today}\\[5mm]
Sergio Conti$^{1}$, Adriana Garroni$^{2}$, and Michael Ortiz$^{1,3}$
\\[2mm]
{\em $^1$ Institut f\"ur Angewandte Mathematik,
Universit\"at Bonn\\ 53115 Bonn, Germany }\\
{\em $^{2}$ Dipartimento di Matematica, Sapienza, Universit\`a di Roma\\
00185 Roma, Italy}\\
 {\em $^{3}$
        Division of Engineering and Applied Science,
        California Institute of Technology \\
        Pasadena, CA 91125, USA
}\\[4mm]
\begin{minipage}[c]{0.8\textwidth}
We propose a discrete lattice model of the energy of dislocations in three-dimensional crystals which properly accounts for lattice symmetry and geometry, arbitrary harmonic interatomic interactions, elastic deformations and discrete crystallographic slip on the full complement of slip systems of the crystal class. Under the assumption of diluteness, we show that the discrete energy converges, in the sense of $\Gamma$-convergence, to a line-tension energy defined on Volterra line dislocations, regarded as integral vector-valued currents supported on rectifiable curves. Remarkably, the line-tension limit is of the same form as that derived from semi-discrete models of linear elastic dislocations based on a core cutoff regularization. In particular, the line-tension energy follows from a cell relaxation and differs from the classical {\sl ansatz}, which is quadratic in the Burgers vector. 
\end{minipage}
\end{center}

\tableofcontents

\section{Introduction}

The passage from discrete models
governed by interatomic potentials to
 continuum elasticity models is a long-standing problem in mathematical physics and the calculus of variations, and already proving that for realistic potentials the ground state is periodic is a major challenge (see \cite{LewinBlanc2015crystallization} and references therein).
To make the problem tractable, it is usual to
  assume crystallization and the existence of a global reference configuration. The variational approach via $\Gamma$-convergence permits to derive continuum limiting energies under the assumption that interactions are rapidly decaying
 \cite{AlicandroCicalese2004}.

In systems with defects, it is not clear that one can use a reference configuration to label the individual atoms. If defects are sufficiently dilute, one can identify locally an ordered configuration, but not a global one. A model accounting for local order was proposed and studied in \cite{LuckhausMugnai2010}.
The typical localized defect which plays a crucial role in the analysis of
lattice-invariant transformations and
the plastic behavior of metals is represented by dislocations.
In a very simplified scalar setting one can still use periodic interactions and a reference configuration, as was done for parallel screw dislocations in \cite{AlicandroDelucaGarroniPonsiglione2014}, see also
\cite{SalmanTruskinovsky2011}.
Treating general dislocations in three dimensions at the discrete level is a very challenging goal.
A major contribution in this direction was given by Ariza and Ortiz \cite{ArizaOrtiz06}, who proposed a reference-configuration based discrete model which encodes lattice-invariant transformations via an additional field representing plastic slip, and accounts for realistic geometrical properties of crystal lattices. A simplification of this model was used in \cite{Ponsiglione2007} for a scalar two-dimensional study of dislocations, in \cite{GiulianiTheil2021} for investigating statistical mechanical properties of defects, and in  \cite{AlicandroDelucaLazzaroniPalombaroPonsiglione2023} for edge dislocations in two dimensions.

We formulate here a variant of the model by Ariza and Ortiz \cite{ArizaOrtiz06}, which has been chosen so that it permits to rigorously address
the discrete-to-continuum limit in the presence of dilute dislocations in three dimensions, without geometric restrictions on the crystal lattice and the active slip systems.

For low-energy states  the physical interatomic potential is approximately quadratic and the limiting energy, in the sense of $\Gamma$--convergence, has an integral representation in $W^{1,2}(\Omega)$. However, the presence of Volterra line dislocations inserts line singularities into the continuum equilibrium problem for the elastic strain and solutions, when they exist, do not belong to the energy space $W^{1,2}(\Omega)$.
This singular structure raises the fundamental question of how to characterize the energetics of Volterra line dislocations, including an appropriate functional setting in which to represent such energetics, and the identification of the precise form of the energy as a continuum limit of discrete models.
The classical approach to this question is represented by a class of models, so-to-say semidiscrete, in which one assumes a quadratic energy density away from the singularity, and an \emph{ad hoc} regularization of the singularity at the dislocation core, where the continuum approximation fails. All these models lead to energies which diverge logarithmically as the radius of the core region (which models the lattice spacing) tends to zero.

A typical semidiscrete model of dislocations is the
one proposed by Peierls and Nabarro, then extended in \cite{KoslowskiCuitinoOrtiz2002, KoslowskiOrtiz2004}, which permits a full mathematical analysis when dislocations are confined to a plane \cite{GarroniMueller2005, GarroniMueller2006,ContiGarroniMueller2011,ContiGarroniMueller2023JEMS}.
For more general, but still well-separated, dislocation geometries one normally uses a core-cutoff regularization.
The analysis in \cite{ContiGarroniOrtiz2015}, reveals that, in the dilute regime the presence of the singularity of the dislocations determines the leading-order term in the energy. Specifically, if $\varepsilon$ denotes the characteristic length of the regularization, which loosely stands in for the lattice size of the crystal, the energy scales logarithmically as $\eps^2\log(1/\eps)$ to leading order for small $\varepsilon$. Furthermore, as proved in \cite{ContiGarroniMarziani}, the details of the regularization do not affect the limiting energy, which is of the {\sl line tension} type,
\begin{equation}\label{MaApOG}
    E(\mu)
    =
    \int_{\gamma\cap\Omega} \psi^\rel_\C(b,t)d\calH^1,
\end{equation}
where $\mu=b\otimes t\calH^1\LL\gamma$ is the dislocation measure supported on a one-rectifiable curve $\gamma\subseteq\Omega$, $t\in L^\infty(\calH^1\LL\gamma; S^{2})$ is the tangent to $\gamma$, $b\in L^1(\calH^1\LL\gamma;\calB)$ is the Burgers vector, which belongs to a lattice $\calB$ of possible Burgers vectors determined by the crystal, and $\psiC^\rel$ is the $\calH^1$-elliptic envelope \cite{ContiGarroniMassaccesi2015} of the line-tension energy of an infinite straight dislocation $\psiC$ (see Section~\ref{seccellproblem} below).
One aim of this work is to compare these semi-discrete models with a fully discrete model. Actually,
the main result of this paper, proved in Theorem~\ref{theomainresult} in Section~\ref{secgammaconvresult}, is that, despite the extensive generalization and added physical fidelity of the present crystalline-slip lattice model relative to the semi-discrete model of \cite{ContiGarroniOrtiz2015}, remarkably, the {\sl dilute limit} of the energy  remains of the same form, namely (\ref{MaApOG}).
In particular, the effective line tension $\psiC^\rel$ follows from the same $\calH^1$-elliptic envelope construction and depends solely on the effective elastic moduli $\C$ of the crystal. We note, however, that $\C$ is an input in the semi-discrete model of \cite{ContiGarroniOrtiz2015}, whereas here $\C$ is obtained from the discrete model, and arises from the crystal geometry and the specifics of the interatomic interactions.
Indeed, the energetics of dislocations in crystals depends sensitively on the geometry of the lattice and the physics of the atomic interactions, including multibody terms and finite range of interaction (see, e.~g., \cite{OrtizPhillips:1999, RamaArizaOrtiz2007, Finnis:2010, Moriarty:2023}).

A key contribution of the present work is the mathematical formulation of the discrete dislocation model, which requires careful but efficient book-keeping schemes to be put in place enabling the representation of general crystal classes and atomic interactions in a manner that is amenable to analysis. In particular, a rigorous description of discrete kinematics, in which crystallographic slip systems are explicitly taken into account, is required. The present formulation is a streamlined subset of the Ariza-Ortiz theory \cite{ArizaOrtiz06}, which regards crystal lattices as discrete differential complexes where discrete lattice dislocations arise as the result of discrete plastic slip, understood as lattice-preserving deformations \cite{ericksen:1979,PitteriZanzottoBook2003,SalmanTruskinovsky2011,BaggioArbibBiscariContiTruskinovskyZanzottoSalman2019-Landau} restricted to well-defined crystallographic planes and slip directions. This kinematics introduces an additional variable, the {\sl plastic slip variable}, that keeps track of the slip activity on the various slip systems and operates mainly on the atomic bonds that cross the slip planes. 

The energy of the crystal is then built on the kinematics of plastic slip and the displacements of the atoms. We specifically envision harmonic finite-range interactions such that the global energy of the crystal is the sum over the lattice of local contributions. The local energies are defined on local clusters of interacting atoms and satisfy the usual properties of material-frame indifference, or invariance under superimposed translations and linearized rotations. In addition, on infinite lattices the energy is translation-invariant and invariant under the symmetry group of the crystal. 
Focusing on cluster interactions also makes it possible for coercivity of the energy to be obtained and localization to be used without requiring the individual interactions to be nonnegative.
The cluster energy thus arises as the natural object on which to enforce invariance under (linearized) rotations and define the action of plastic slip.
This selects a subset within the class of admissible discrete energies in the class proposed by Ariza and Ortiz \cite{ArizaOrtiz06}, guaranteeing non-degeneracy of the model.

As in the other three-dimensional results on semidiscrete models of dislocations, 
we are concerned here with well-separated distributions of dislocations. In this limit, dislocations are assumed to be polygonal lines on a scale $h$ that is intermediate between the lattice scale $\eps$ and the size $L$ of the domain $\Omega$. 
Indeed, 
in three dimensions the energy of small-scale dislocation microstructures may be small, with the result that the total length of the dislocation lines is not controlled by the energy. 
In two dimensions, the well-separation condition has been removed in many recent results by a careful study of local clustering over many scales, see, e.~g., \cite{Ponsiglione2007,DelucaGarroniPonsiglione2012,Ginster1,Ginster2,AlicandroDelucaLazzaroniPalombaroPonsiglione2023}, building upon ideas first developed for the Ginzbug-Landau functional \cite{SandierSerfaty2007}. We remark that a restriction of our model to  two-dimensional lattices results in a general framework for discrete dislocation models in two dimensions to which similar strategies could be applied in order to remove the well-separation condition. In this paper, however, we focus on the three-dimensional case.

In closing this introduction, we point out a number of limitations of the present theory that suggest possible directions for further extensions. Firstly, as a matter of convenience we restrict attention to single-species centrosymmetric crystals in which all atoms are of the same kind and 'see' identical neighborhoods, or clusters. This simplification results in considerable notational convenience but rules out alloys or materials such as graphene, characterized by a number of different local clusters.
Secondly, while harmonic models of discrete dislocations such as considered here suffice to characterize crystal elasticity \cite{sengupta:1988}, including crystal acoustics \cite{musgrave:1970} and the topology of stable low-energy dislocation structures \cite{kuhlmann:1989}, consideration of anharmonic effects---and the attendant core relaxations---is important for achieving quantitative and material-specific accuracy \cite{GallegoOrtiz1993, WoodwardRao2002, Arca:2019}. Thirdly, our analysis is predicated on the assumption that the crystal is outfitted with a complete set of crystallographic slip systems, see Definition~\ref{defbetapl2}. However, some crystal classes are deficient as regards the number of operative slip systems (see, e.~g., \cite{Britton2015} for a recent review) and, therefore, are excluded by the present analysis. 
Finally, the issue of well-separation of line dislocations
remains open.

The paper is structured as follows. In
Section~\ref{secdiscretemodel} we introduce our discrete model including the kinematics of plastic slip and the corresponding energy. Moreover, we describe in detail the assumptions needed for the discrete formulation and the passage to the continuum limit, including in particular the  assumption of well-separation.
In Section~\ref{seclocalcontinuum} we relate the discrete energy to a continuum energy in the regions without dislocations.
In Section~\ref{sectionextensionrigidity} we prove compactness for a
semidiscrete continuum model with well-separated dislocations. A key ingredient in the proof is the construction of an extension of the elastic strain inside the core, permitting to apply a rigidity result. Some of the details of the extension are presented in Appendix~\ref{secappext}. Finally,
Section~\ref{secgammaconvresult} is devoted to the statement and proof of the main $\Gamma$-convergence result, Theorem~\ref{theomainresult}.
As a preliminary step before proving Theorem~\ref{theomainresult}, we show in Theorem~\ref{theomainresultelastic} that our discrete energy without defects $\Gamma$-converges to a linearized elastic model, which in particular identifies the elastic constants $\C$ appropriate for the description of the line-tension energy $\psi_\C$.

\section{The discrete model}

We consider systems of interacting atoms arranged as a crystal lattice away from line defects, or {\sl dislocations}, and seek to characterize the low-energy configurations of the system under suitable diluteness conditions. The main elements of the model concern the geometrical description of the local neighborhoods of the atoms, or {\sl clusters}, and their possible deformations, including crystallographic slip and incompatibility in the core regions; the local energetics of the clusters and the attendant global energetics of the crystal, including traction-free boundary conditions; scaling of the crystal lattice and attendant continuum limit; and appropriate notions of diluteness of the dislocation lines. 

\label{secdiscretemodel}
\begin{figure}
 \begin{center}
  \includegraphics[width=11cm]{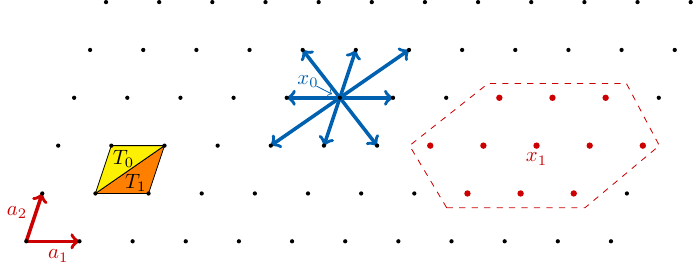}
 \end{center}
\caption{Sketch of the geometry in the simple example discussed in Section \ref{seckinematics}.
The two vectors $a_1$ and $a_2$ are displayed in red in the lower-left corner, a copy of each of the two simplexes is drawn in yellow and orange, and the bonds starting from a point $x_0$  (i.e., the points $x_0+h$, $h\in\calN$) are drawn in blue. The points marked red are the copy  $x_1+\calC$ of a  cluster  $\calC$, it consists of eleven points, the nine in $\{x_1+\sum_i \lambda_i a_i: \lambda\in\{-1,0,1\}^2\}$ as well as  $\{x_1\pm 2 a_1\}$.
}
\label{fig-lattice}
\end{figure}

\subsection{Discrete kinematics in $\R^n$}
\label{seckinematics}
For the entirety of the paper, we shall assume that a lattice $\calL$, a set of bonds $\calN\subseteq\calL$, and a cluster $\calC\subseteq\calL$ are fixed, defined as follows, cf.~Figure~\ref{fig-lattice}. 
 
\begin{definition}[Discrete crystal]\label{deflattice}
By a discrete crystal we shall understand the combination of the following objects:
\begin{itemize}
\item[i)]{Bravais lattice.} A Bravais lattice $\calL\subseteq\R^n$ is a set of the form $\calL = \{\sum_{i=1}^n z_i a_i : z\in\Z^n\}$, where $\{a_1, \dots, a_n\}$ is a basis of $\R^n$.
\item[ii)]{Bond set.} A set of bonds $\calN$ is a finite subset of $\calL$ such that $\calN=-\calN$, $0 \not \in \calN$. 
\item[iii)]{Cluster.} A cluster $\calC$ is a finite subset of $\calL$ such that $0\in\calC$.
\end{itemize}
\end{definition}

The sets $\calN$ and $\calC$ are the building blocks used to define the kinematics and then the energetics of defects, and need to be sufficiently rich (for example, Definition~\ref{def-tetrahedra}). 

The bond set $\calN$ is determined by the atomic interactions, in the sense that if atoms $x,y\in\calL$ interact then necessarily $x-y\in\calN$. In particular, discrete displacement gradients and deformations are defined on such pairs, and not on all possible pairs, so that the kinematics already encodes a finite interaction range. We remark that $\calN$ is, in general, larger than the set of first neighbours, and can be chosen to have the same symmetry of $\calL$. 

The cluster $\calC$ is the basic unit on which the energy is defined. The energies under consideration are quadratic, but not necessarily the sum of pairwise interactions, so that, in particular, three-point and higher-order interactions are also possible.
One crucial aspect in the choice of the cluster is that the energy of a single cluster  needs to be nonnegative and coercive, in a sense made precise below.
These concepts are further formalized in Section \ref{seccluster}, see, in particular, Definition~\ref{defcluster}.

\begin{example}[Two-dimensional discrete crystal]
{\rm 
A pair of vectors $a_1$, $a_2$ spanning $\R^2$ defines a two-dimensional discrete crystal as follows: i) A Bravais lattice $\calL:=\{a_1z_1+a_2z_2: z\in\Z^2\}$; ii) a bond set $\calN:=\{\pm a_1, \pm a_2, \pm (a_1+a_2), \pm (a_1-a_2)\}$; iii) a cluster
$\calC:=\{0, \pm a_1, \pm a_2,  \pm (a_1+a_2), \pm (a_1-a_2),\pm 2a_1\}$, cf.~Figure~\ref{fig-lattice}.}\hfill$\square$
\end{example}

We next introduce displacement fields and strains on discrete crystals without regard for domains or boundaries. In Section \ref{seclocalresc}, we rescale and localize fields to subsets of $\R^n$ defining the domain of a crystalline solid.

\begin{definition}[Displacements and strains]\label{eqdefduxi}
A displacement field on scale $1$ is a map $u:\calL\to \R^n$. Its discrete gradient is the map $du:\calL\times \calN\to\R^n$ defined by
\begin{equation*}
    du(x,h):=u(x+h)-u(x), \quad x\in\calL, h\in\calN.
\end{equation*}
We verify that $du(x,h)=-du(x+h,-h)$.

A strain field, or discrete deformation, on $A\subseteq\calL$ is a function $\discbeta:A \times \calN\to \R^n$. We say that $\discbeta$ is admissible if $(x,h) \in A$ implies $(x+h,-h)\in A$ and
\begin{equation}\label{eqbetacompatible1}
    \discbeta(x,h)=-\discbeta(x+ h, -h), \quad \text{for all} \; (x,h)\in A.
\end{equation}
\end{definition}

We recall that, in the elastic case, discrete deformations $\xi$ are set directly by the displacement field $u$, in the sense that $\discbeta(x,h)=du(x,h)$, and are always admissible.

In order to relate discrete to continuous deformations, it will be useful to fix a decomposition of a unit cell of $\calL$ into simplices. By unit cell we understand a set of the form $\{\sum_{i=1}^n \lambda_i a'_i : \lambda \in [0,1]^n\}$ for some choice of the vectors $a_i'$ such that $\calL = \{\sum_{i=1}^n z_ia'_i: z\in\Z^n\}$. We require simplices to have vertices in $\calL$ and edges in $\calN$. Evidently, the existence of this set of simplices is only possible if the sets $\calN$ and $\calC$ are not too small.

\begin{definition}[Simplicial cover]\label{def-tetrahedra}
A set $T\subseteq\R^n$ is a lattice simplex if there is a set  $\{x_1, \dots, x_{n+1}\} \subseteq\calL$, denoted by   $\vertici(T)$, such that $x_i-x_j\in\calN$ for all $i\ne j$, $T=\conv\{x_1,\dots, x_{n+1}\}$ and $T$ has nonempty interior.

A $(\calC,\calN)$-admissible cover of  the lattice $\calL$ is a finite set $\{T_0, \dots, T_{N_T}\}$ of lattice simplices such that they have disjoint interior, $T_*:=\cup_{i=0}^{N_T} T_i$ is a unit cell of $\calL$, $0\in T_0$, $\vertici(T_i)\subseteq\calC$ for all $i$, and the set $\{x+T_i: x\in \calL, i\in\{0,\dots, N_T\}\}$ is a conforming mesh. This means that if $T',T''$ are two elements of the mesh then $T'\cap T''=\conv(\vertici(T')\cap \vertici(T''))$.

We also define the constants
\begin{equation}\label{eq-def-dT}
	d_{T_*}:= 2\max \{|x|: x\in T_*\}\geq \diam(T_*)
\end{equation}
and 
\begin{equation}\label{eq-def-dC}
	d_{\calC}:= 2\max \{|x|: x\in \calC\}\geq \diam(\calC).
\end{equation} 
\end{definition}
 
We shall therefore assume that $\calC$ and $\calN$ are sufficiently rich to ensure the possibility of constructing the mesh in a way which is compatible with them. This is certainly {satisfied} if they contain all interactions up to a given range, which is larger than the diameter of the unit cell. We stress that the choice of the cover is, in general, not unique, it may have lower symmetry than $\calL$, $\calC$, and $\calN$. Both the discrete and the limiting energy do not depend on the choice of the cover, but many steps in the proof, and the constants in many estimates, will depend on it. One can easily see that, for a given triplet $(\calL,\calC,\calN)$ there are only finitely many possible choices of the set of simplices, therefore the constants can be made to depend only on $(\calL,\calC,\calN)$ with only notational changes.

\begin{remark}
The definition above implies that the closed set $T_i$ does not contain any lattice point except for the vertices, $\# (T_i\cap\calL)=n+1$ and $\calL^n(T_i)=\calL^n(T_*)/n!$ for all $i$ and $N_T=n!-1$. As these results are not used in the following, we do not provide details of the proofs.
\end{remark}

For example, $T_0, \dots, T_{N_T}$ could be obtained as the Freudenthal partition of a unit cell $\{ \sum_i  \lambda_i a'_i, \lambda\in[0,1]^n\}$.
Specifically, for any permutation $\sigma$ of $\{1, \dots, n\}$ one takes the closed simplex
\begin{equation}
 T_\sigma:=\{ \sum_{i=1}^n \lambda_i a'_i: \lambda\in[0,1]^n, \lambda_{\sigma(1)}
\le \lambda_{\sigma(2)}\le \dots\le \lambda_{\sigma(n)}\},
\end{equation}
these $n!$ sets have disjoint interior. This automatically produces a conforming mesh. In the two-dimensional situation sketched above one obtains
$T_0=\{\lambda_1a_1+\lambda_2a_2: 0\le \lambda_1\le \lambda_2\le 1\}$ and
$T_1=\{\lambda_1a_1+\lambda_2a_2: 0\le \lambda_2\le \lambda_1\le 1\}$, see Figure~\ref{fig-lattice}.

\subsection{Interaction energy in a cluster}
\label{seccluster}
The interaction energy is defined clusterwise. We start by a single cluster. The elastic energy of a cluster is a map $\EC$ defined on the set of deformations on the cluster $\calC$.  Precisely,
given $\calL$, $\calC$ and $\calN$ as in Definition~\ref{deflattice} such that a $(\calC,\calN)$-admissible cover $\{T_0, \dots, T_{N_T}\}$ in the sense of Definition~\ref{def-tetrahedra} exists, we denote by
\begin{equation*}
 \calC_\calN:=\{(x,h)\in\calC\times\calN: x+h \in\calC\}
\end{equation*}
the set of bonds within the cluster, and by
\begin{equation}
    D_\calC:=\{\discbeta: \calC_\calN\to\R^n, \text{ $\discbeta$ is admissible}\}
\end{equation}
the set of possible cluster deformations.
We assume that $\EC$ is a quadratic function on $D_\calC$, invariant under linearized rotations, and coercive, in a sense that we now make precise.

We start by introducing a simplified reference energy.
Let $\calC^0:=\bigcup_i\vertici T_i\subseteq\calC$, and $\calC_\calN^0:=\{(x,h)\in \calC^0\times \calN:x+h\in\calC^0\}\subseteq \calC_\calN$.
We define the reference energy  $E^0_\calC:D_\calC\to\R$ as
\begin{equation}\label{eqdefE0C}
 E^0_\calC[\discbeta]:=
 \min\{ \sum_{(x,h)\in  \calC_\calN^0}|\discbeta(x,h)-Sh|^2: S\in \R^{n\times n}_\skw\}.
\end{equation}
Here and in the entire paper, we use the Euclidean norm on all finite-dimensional vector spaces, such as for example $D_\calC$, after identifying them with $\R^k$ as appropriate.
{Given a matrix $S\in\R^{n\times n}$, representing a continuum deformation gradient, one can obtain the corresponding discrete strain field by
\begin{equation}
 \xi^S(x,h):=Sh.
\end{equation}}
\begin{definition}\label{defcluster}
A cluster elastic energy is a quadratic function $\EC: D_\calC\to\R$ which is invariant under linearized rotations and coercive, in the sense that
\begin{equation}\label{eqclusterlinearrotations}
\EC[\discbeta+\discbeta^S]=\EC[\discbeta], \hskip5mm\text{ for every $\discbeta\in D_\calC$ and  $S\in\R^{n\times n}_\skw$},
\end{equation}
and there is $\alpha >0$ such that
\begin{equation}\label{eqasscoerc}
  \alpha E^0_\calC\le  \EC,
\end{equation}
with $E^0_\calC$ as in \eqref{eqdefE0C}.
From $\EC$ we define $\C\in\R^{n\times n\times n\times n}_\sym$  as
\begin{equation}\label{eqdefC}
  \frac12 \C A\cdot A = \frac{1}{\calL^n(T_*)}\EC[\discbeta^A]
\end{equation}
for all $A\in\R^{n\times n}$. 
\end{definition}
Since $E_\calC$ is a quadratic form, there is $c>0$ such that
\begin{equation}\label{eqclusterfromabove}
    \frac1c E_C[\discbeta]\le |\xi|^2= \sum_{(y,h)\in \calC_\calN}
    |\discbeta(y,h)|^2 \text{ for all }\xi\in D_\calC.
\end{equation}
Here and subsequently, $c$ denotes a generic constant that may depend on $\calL$, $\calC$, $\calN$, $E_\calC$ and the tetrahedra, but not on $\eps$, $u$ or $\xi$.
We stress that these quantities are only defined if a
 $(\calC,\calN)$-admissible cover $\{T_0, \dots, T_{N_T}\}$ in the sense of Definition~\ref{def-tetrahedra} exists.
\begin{remark}
The definition of $\C$ is well posed since $A\mapsto\discbeta^A(x,h):=A h$ is linear and $\EC$ is quadratic, so the right-hand side of \eqref{eqdefC} defines a quadratic form on $\R^{n\times n}$. From the invariance under linearized rotations in \eqref{eqclusterlinearrotations} we obtain $\C A=0$ whenever $A+A^T=0$. From the coercivity in \eqref{eqasscoerc} and the definition~\eqref{eqdefE0C} applied to $\xi^A$ we obtain 
\begin{equation}\label{eqpropC}
    \C A\cdot A \ge c_0|A+A^T|^2 \hskip2mm\text{ and } 
    \hskip2mm \C(A-A^T)=0 \hskip2mm\text{ for all } A\in\R^{n\times n}\,.
\end{equation}
\end{remark}

\subsection{Localization and rescaling}
\label{seclocalresc}
We shall be interested in a crystal occupying a finite domain $\Omega\subseteq\R^n$, with atomic distance $\eps$ much smaller than the size domain 
\begin{definition}[Scaled displacements and deformations]\label{defdepsu}
Let $\Omega\subseteq\R^n$ be open, $\eps>0$. A displacement field on scale $\eps$ on $\Omega$ is a map $u:\Omega\cap\eps\calL\to\R^n$, its discrete gradient $d_\eps u: \bonds_\eps^\Omega\to\R^n$ is defined by
\begin{equation}\label{eqdefdepsu}
 d_\eps u(x,h):=\frac{u(x+\eps h)-u(x)}{\eps},\end{equation}
where
\begin{equation}
 \bonds_\eps^\Omega := \{(x,h)\in (\eps\calL\cap\Omega)\times\calN:  x+\eps h\in\Omega\}
\end{equation}
denotes the set of bonds in $\Omega$. A deformation is a map $\discbeta : \bonds_\eps^\Omega\to\R^n$, it is admissible if 
\begin{equation}\label{eqbetacompatible}
    \discbeta(x,h)=-\discbeta(x+\eps h, -h) 
    \text{ for all $(x,h)\in  \bonds_\eps^\Omega $.}
\end{equation}
\end{definition}

Furthermore, we denote by 
\begin{equation}\tetr_\eps^\Omega:=\{x+\eps T_i \subseteq\Omega: x\in\eps \calL,\, 0\le i\le N_T\}
\end{equation}
 the set of all scaled lattice simplices contained in $\Omega$. For $T\in \mathcal{T}_\eps^\Omega$ we denote by 
\begin{equation}
\vertici(T):=T\cap \eps \calL
\end{equation}
its $n+1$ vertices, and by 
\begin{equation}
\edges(T):=\{(x,y)\in \vertici(T)\times \vertici(T): x\ne y\}
\end{equation} its $n(n+1)$ oriented edges.
The clusters contained in $\Omega$ are represented by the points
\begin{equation}\label{eqdefclustepsomega}
 \clust_\eps^\Omega := \{x\in\eps\calL: x+\eps\calC\subseteq\Omega\}.
\end{equation}

We stress that the bond $h\in\calN$ in the expression $d_\eps u(x,h)$ is not scaled, as it is mainly used to denote a direction and to label a type of connection between lattice atoms. If $u$ is continuously differentiable, then $d_\eps u(x,h)$ is the average of {$(Du)h$} over the segment $[x,x+\eps h]$.

\begin{figure}
 \begin{center}
  \includegraphics[width=6cm]{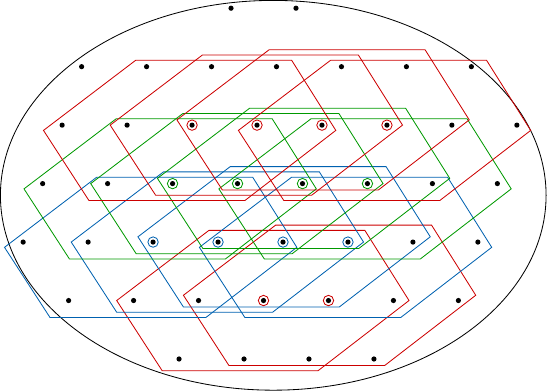}
 \end{center}
\caption{Sketch of a set $\Omega$, the points $\eps\calL\cap\Omega$,
and the set $\clust_\eps^\Omega$ (circled dots) of centers of clusters contained in $\Omega$, for the lattice in Figure~\ref{fig-lattice}. The colored lines mark the sets $x+\eps\calC$, for each $x\in \clust_\eps^\Omega$.}
\label{fig-lattice3}
\end{figure}

The energy in the open set $\Omega\subseteq\R^n$ is then defined, for an admissible deformation $\discbeta:\bonds_\eps^\Omega\to\R^n$, by
\begin{equation}\label{eqdeftotalenergy}
    E_\eps[\discbeta, \Omega]:=\sum_{x\in\clust_\eps^\Omega } \eps^n
    \EC[\discbeta(x+\eps \cdot, \cdot)]=\sum_{x\in\clust_\eps^\Omega } \eps^n
    \EC[\tau^x_\eps\discbeta],
\end{equation}
where $\tau^x_\eps$ denotes the translation and scaling operator, defined by
\begin{equation}\label{eqdeftaux}
(\tau^x_\eps\discbeta)(y,h):=\discbeta(x+\eps y,h).
\end{equation}

This concludes the definition of the model in the elastic case.

\subsection{Slip and plastic strains}
In order to introduce plastic slip, we fix a set of slip systems, according to the crystal class of the lattice \cite{HirthLothe1968,HullBacon:2001}, and define the plastic strain $\xi^\pl$ so that the slip across each bond is an integer multiple of a possible slip vector.
Plastic slip can be seen as a \emph{reconstructive} phase transformation, bringing the material outside of the Ericksen-Pitteri neighbourhood \cite{ericksen1989weak,PitteriZanzottoBook2003,BaggioArbibBiscariContiTruskinovskyZanzottoSalman2019-Landau}, and therefore is not reversible. From a purely variational viewpoint, the relaxation of the continuum energy in the presence of dislocations is degenerate \cite{Fonseca1988}. In the present discrete setting, the quantization of plastic slip leads naturally to a finite energy cost for dislocations, and therefore to a penalization of incompatibilities, even without adding regularizing terms. In particular, the discreteness of plastic slip prevents a degenerate relaxation of the elastic energy.

\begin{definition}[Slip systems]\label{defslipsys}
A set of slip systems is a  finite set of pairs 
\begin{equation}
    (\burgersbi{1},m_1),\dots, (\burgersbi{N_s}, m_{N_s})\in \calL\times \calL^*,
\end{equation} 
with $\burgersbi{\ell}\ne0$, $m_\ell\ne0$ and  $\burgersbi{\ell}\cdot m_\ell=0$ for all $\ell$. Here $\calL^*$ denotes the dual lattice,
\begin{equation}
    \calL^*:=\{y\in \R^n: y\cdot x\in \Z \text{ for all } x\in \calL\}.
\end{equation}
The $\burgersbi{\ell}$ are called Burgers vectors, the $m_\ell$ are called slip-plane normals. We let $\calB:=\{\sum_{\ell=1}^{N_s} z_\ell \burgersbi{\ell}: z\in\Z^{N_s}\}\subseteq \calL$.
\end{definition}

\begin{definition}[Discrete plastic strain]\label{defbetapl} 
We say that  $\discbetapl:\bonds_\eps^\Omega\to\calB$ is a discrete plastic strain if, for all $(x,h)\in\bonds_\eps^\Omega$, $\discbetapl(x,h) = -\discbetapl(x+\eps h,-h)$ and there is $\discxi(x,h)\in\R^{N_s}$ such that $\discxi_\ell(x,h)m_\ell\cdot h\in\Z$ for all $\ell$ and
\begin{equation}\label{eqbetapldisc}
    \discbetapl(x,h)
    =
    \sum_{\ell=1}^{N_s}
    \discxi_\ell(x,h) (m_\ell\cdot h)
    {\burgersbi{\ell}}.
\end{equation}
\end{definition}
From \eqref{eqbetapldisc}, we get $\discbetapl(x,h)\in\calB\subseteq \calL$ for all $x$ and $h$. From $\calB\subseteq \calL$, we see that $\calB$ is a lattice that can be generated by $n'$ linearly independent vectors, for some $n'\le n$.

\begin{definition}\label{defbetapl2}
We say that the set of slip systems is complete if there is a set of $n'$ independent vectors $\hat b_1$, $\hat b_2, \dots, \hat b_{n'}$, with $n'$ the dimension of $\calB$, such that $\calB=\Span_\Z\{\hat b_1, \hat b_2, \dots, \hat b_{n'}\}$ and for every $j\in\{1,2,\dots, n'\}$ there are $n-1$ linearly independent vectors $m_j^1,\dots, m_j^{n-1}$ such that $(\hat b_j, m_j^k)$  is a slip system for all $j$ and $k$.
\end{definition} 

The completeness condition states that for every vector $h\in\R^n$ which is not parallel to $\hat b_j$ there is at least one index $k$ such that $m_j^k$ is not orthogonal to $h$. This condition will be important for the discretization of continuum plastic strains in the upper bound construction in Section \ref{secdiscreteupperbound}. It is easy to see that the classical sets of slip systems for simple cubic, face-centered cubic (fcc) and body-centered cubic (bcc) lattices fulfill this condition.
 
Given a deformation $u:\Omega\cap \eps \calL\to\R^n$ and a discrete plastic strain $\discbetapl$, the elastic strain $\discbetael:\bonds_\eps^\Omega\to\R^n$ can be determined by
\begin{equation*}
    \discbetael (x,h)
    :=
    d_\eps u(x,h)-\discbetapl(x,h)
    =
    \frac{u(x+\eps h)-u(x)}{\eps}-\sum_{\ell=1}^{N_s} 
    \discxi_\ell(x,h)\burgersbi{\ell} m_\ell\cdot h.
\end{equation*}
The total elastic energy in~\eqref{eqdeftotalenergy} can then be understood as only depending on the elastic strain, $E_\eps[\discbetael, \Omega]= E_\eps[d_\eps u-\discbetapl,\Omega]$.

We remark that, if $\discxi(x,h)=\discxi^*\in\R^{N_s}$ is constant and $u(x):=Fx$ is affine, then
\begin{equation*}
    \discbetael(x,h)
    =
    (F-\sum_{\ell=1}^{N_s} \discxi^*_\ell \burgersbi{\ell}\otimes m_\ell)h.
\end{equation*}

In order to detect the presence of dislocations we shall examine the circulation of the deformation $\discbeta$ along discrete paths. In terms of the theory of de Rham chain complexes, one can identify the lattice as a 0-chain and the discrete path as a 1-chain, see \cite{ArizaOrtiz06} for discussions thereof. We adopt here instead a more explicit notation that is better adapted to treating boundary and localization effects, including general finite-range energies.

\begin{definition}\label{definitioncurlfree}
Let $\omega\subseteq\R^n$. An $\eps$-discrete path in $\omega$ is a tuple $P=(x_0, x_1, \dots, x_K)$ such that  $ x_k\in\omega\cap \eps \calL$ and $x_k-x_{k-1}\in\eps\calN$ for all $k$. The path $P$ is closed if $x_0=x_K$. A path may consist of a single point. An edge of $P$ is a pair $(x_{k-1}, x_{k})$, for $1\le k\le K$.

The circulation of 
$\discbeta:\bonds_\eps^\omega\to\R^n$ over the closed path $P=(x_0, \dots, x_K)$ is defined as
\begin{equation}
\circu(\discbeta, P):= \sum_{k=1}^K \eps \discbeta(x_{k-1}, \frac{x_k-x_{k-1}}\eps ) .
\end{equation}

We say that $\discbeta:\bonds_\eps^\omega\to\R^n$ is \emph{exact}
if for every closed $\eps$-discrete path  $P$ in $\omega$ 
the circulation of $\discbeta$ over $P$ vanishes.

We say that an \emph{$\eps$-discrete path $P$ is elementary} if for each edge $(x_{k-1}, x_{k})$ of $P$ there is $T\in \calT^{\R^n}_\eps$  such that $(x_{k-1},x_{k})\in\edges(T)$.

We write $\oplus$ for the concatenation operation, $(x_1,\dots, x_K)\oplus(x_K,\dots, x_H)=(x_1,\dots, x_K,\dots x_H)$ and write briefly $P\subseteq A$, for $A\subseteq\R^n$, if all elements of $P$ are in $A$.
\end{definition}

\begin{remark}\label{remarkcircinL}
From this definition and the kinematic condition in \eqref{eqbetapldisc}, it follows that $\circu(\discbetapl,P)\in\eps \calB$ for any discrete plastic strain $\discbetapl$ and any closed $\eps$-discrete path $P$.
\end{remark}

\begin{lemma}\label{lemmadiscretepath}
Let $\omega\subseteq\R^n$. A map $\discbeta: \bonds_\eps^\omega\to\R^n$ is exact if and only if there is $u:\eps\calL\cap\omega\to\R^n$ such that $\discbeta=d_\eps u$ on $\bonds_\eps^\omega$.
\end{lemma}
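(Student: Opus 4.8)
The statement is a discrete Poincaré lemma, and the plan is to prove the two implications separately, treating the easy direction together with a preliminary observation that lets me dispense with any admissibility hypothesis. The implication ``$\discbeta=d_\eps u$ $\Rightarrow$ exact'' is immediate by telescoping: along any closed path $P=(x_0,\dots,x_K)$ with $x_K=x_0$, the definition of $\circu$ and of $d_\eps u$ in \eqref{eqdefdepsu} give $\circu(d_\eps u,P)=\sum_{k=1}^K\bigl(u(x_k)-u(x_{k-1})\bigr)=u(x_K)-u(x_0)=0$. For the converse I would first record a consequence of exactness: applying the definition to the two-step closed path $P=(x,x+\eps h,x)$, which is legitimate since $\calN=-\calN$, yields $\eps\discbeta(x,h)+\eps\discbeta(x+\eps h,-h)=0$. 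Thus every exact $\discbeta$ is automatically admissible, $\discbeta(x,h)=-\discbeta(x+\eps h,-h)$, and I need not assume \eqref{eqbetacompatible} in the hypothesis.

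Assuming now that $\discbeta$ is exact, I would construct $u$ by the standard ``integrate along a path'' recipe. First extend the circulation to open paths by setting $I(\discbeta,P):=\sum_{k=1}^K\eps\,\discbeta\bigl(x_{k-1},(x_k-x_{k-1})/\eps\bigr)$ for $P=(x_0,\dots,x_K)$, so that $I=\circu$ on closed paths and, by the admissibility just derived, reversing a path changes the sign, $I(\discbeta,\bar P)=-I(\discbeta,P)$, where $\bar P$ denotes $P$ traversed backwards. Next partition $\eps\calL\cap\omega$ into classes of points joined by $\eps$-discrete paths in $\omega$; in each class fix a base point $x_*$ and declare $u(x_*):=0$, any value being admissible. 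For a general $x$ in that class I choose any $\eps$-discrete path $P$ from $x_*$ to $x$ in $\omega$ and set $u(x):=I(\discbeta,P)$.

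It remains to check that $u$ is well defined and recovers $\discbeta$. Path-independence is exactly where exactness enters: if $P_1,P_2$ both run from $x_*$ to $x$, then $Q:=P_1\oplus\bar P_2$ is a closed path in $\omega$, and exactness together with the reversal identity gives $0=\circu(\discbeta,Q)=I(\discbeta,P_1)-I(\discbeta,P_2)$, so $u(x)$ does not depend on the chosen path. Finally, for $(x,h)\in\bonds_\eps^\omega$ the points $x$ and $x+\eps h$ lie in the same class, being joined by that single edge; taking a path $P$ to $x$ and appending this edge gives $u(x+\eps h)=I(\discbeta,P)+\eps\,\discbeta(x,h)=u(x)+\eps\,\discbeta(x,h)$, and dividing by $\eps$ yields $d_\eps u(x,h)=\discbeta(x,h)$.

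The only genuine content is the well-definedness of $u$, i.e.\ path-independence, which is precisely the place exactness is used; everything else is telescoping and bookkeeping. The one subtle point I would take care to flag is that the hypothesis does \emph{not} postulate admissibility, so the reversal identity $I(\discbeta,\bar P)=-I(\discbeta,P)$ cannot be invoked directly and must first be obtained from the two-step-path argument above. Handling a possibly disconnected $\omega$ by choosing base points class-by-class is routine but worth stating explicitly, since $u$ is then determined only up to an additive constant on each connected class.
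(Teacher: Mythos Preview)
Your proof is correct and follows the same approach as the paper's: telescoping for the forward implication, and path-integration over discretely connected components for the converse. Your argument is in fact more careful than the paper's, which simply asserts that ``since $\discbeta$ is exact, this definition is well posed'' without spelling out that the admissibility relation $\discbeta(x,h)=-\discbeta(x+\eps h,-h)$ (needed for the reversal identity in the path-independence step) is itself a consequence of exactness via the two-step closed path.
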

\begin{proof}
If $u$ exists, then for any closed path as in Definition \ref{definitioncurlfree} we have
\begin{equation}
\begin{split}
 \sum_{k=1}^K \discbeta( x_{k-1}, \frac{x_k-x_{k-1}}\eps) =&
 \sum_{k=1}^K   d_\eps u(x_{k-1}, \frac{x_k-x_{k-1}}\eps) \\=&
\sum_{k=1}^K \frac1\eps\left(  u(x_k)-u(x_{k-1})\right)=0.
\end{split}
\end{equation}
 
 Assume now that $\discbeta$ is exact. 
 We say that a set $A\subseteq\omega\cap \eps \calL\subseteq\R^n$ is $\eps$-discretely connected if for any $a,b\in A$,
there is a discrete path in $\omega$ with $x_0=a$ and $x_K=b$.
 Let $A_1,\dots, A_J$ be the discretely connected components of $\omega\cap \eps \calL$, defined via the equivalence relation $a\sim b$ if there is a discrete path in $\omega$ which contains both $a$ and $b$.
 We pick an element $a_j\in A_j$ for each connected component, and set $u(a_j)=0$.
 For $b\in A_j$, we consider any path $(a_j=x_0, x_1, \dots, x_K=b)$ and set
 \begin{equation}
  u(b):=\sum_{k=1}^K \eps\discbeta( x_{k-1}, \frac{x_k-x_{k-1}}{\eps}).
 \end{equation}
Since $\discbeta$ is exact, this definition is well posed.
\end{proof}

\begin{definition}\label{defLbeta}
For $\Omega\subseteq\R^n$ open and $\discbeta:\bonds^\Omega_\eps\to\R^n$ we define the interpolation
$L\discbeta\in L^\infty_\loc(\R^n;\R^{n\times n})$ as follows. Inside each simplex $T\in\tetr^\Omega_\eps$, $L\discbeta$ is constant and is defined as
the unique minimizer over all $F\in\R^{n\times n}$ of
\begin{equation}\label{eqdefLbeta}
 \calA_{T}(F):=\sum_{(x,x+\eps h)\in \edges(T)} |Fh-\discbeta(x,h)|^2.
\end{equation}
In the rest of $\R^n$ we set $L\discbeta=0$. We denote by $L\xi|_T$ the value of $L\xi$ inside $T$.
\end{definition}
As $L\xi$ is piecewise constant, $L\xi\in SBV_\loc(\R^n;\R^{n\times n})$, with $\nabla L\xi=0$ and $J_{L\xi}\subseteq \bigcup_{T\in \calT^{\Omega}_\eps}\partial T$.
\begin{remark}\label{remark-Lcsiexact}
	If $\discbeta$ is exact  on $T$, with $T\in \tetr_\eps^\Omega$,
	then the minimum in (\ref{eqdefLbeta}) is zero, the minimizer coincides with the gradient of the affine interpolation between the values of the map $u$  obtained in Lemma \ref{lemmadiscretepath}, and
\begin{equation}
 L\discbeta|_Th=\discbeta(x,h) \text{ if } (x, x+\eps h)\in \edges(T).
\end{equation}
In particular, if $(x,x+\eps h)$ is a common edge of two simplices $T$, $T'\in \tetr_\eps^\Omega$
and $\discbeta$ is exact on both, then $L\xi|_Th=L\xi|_{T'}h$.
If $\discbeta$ is exact on a set $\omega$, and two simplices
$T\ne T'\in \calT^\omega_\eps$ share a face, then
the jump $[L\discbeta]$ over the face $T\cap T'$ is orthogonal to $T\cap T'$.
	\end{remark}

The energy of a cluster gives a bound on the symmetric part of $L\discbeta$ and on the incompatibility of $L\discbeta$ on the boundary of the tetrahedra.

\begin{proposition}\label{prop-Lcsi}
There exists a constant $c>0$ such that, for every $\Omega'\subset\subset\Omega$ with $\dist(\Omega', \partial\Omega)\ge d_\calC\eps$ and 	every $\discbeta : \bonds^\Omega_\eps \to \R^n$, 
\begin{equation}\label{eqLbetasymEcint}
    \int_{\Omega'}|L\discbeta+(L\discbeta)^T|^2dy \le c E_\eps[\discbeta,\Omega].
\end{equation}
Moreover, if $T\ne T'\in \calT^\Omega_\eps$ share a face, with $T\subseteq z+\eps T_*$ and $T'\subseteq z'+\eps T_*$, then
\begin{equation}\label{eqcurlLbetatetr}
	|D L\discbeta-(DL\discbeta)^T|(T\cap T') 
    \le 
    c \eps^{n-1}(\EC[\tau^{z}_\eps\discbeta]
    +
    \EC[\tau^{z'}_\eps\discbeta])^{1/2}.
\end{equation}
\end{proposition}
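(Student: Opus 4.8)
The plan is to reduce both estimates to a single per-simplex bound relating the least-squares interpolant $L\discbeta$ on a lattice simplex to the cluster energy, and then either to integrate and sum (for (i)) or to track the jump across a common face (for (ii)).

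First I would fix a scaled simplex $T=z+\eps T_i$ with $z\in\eps\calL$ and rewrite the functional defining $F:=L\discbeta|_T$. Every edge $(x,x+\eps h)\in\edges(T)$ comes from an edge $(\tilde x,\tilde x+h)\in\edges(T_i)$ with $\tilde x,\tilde x+h\in\vertici(T_i)\subseteq\calC^0$, so
\begin{equation*}
\calA_T(G)=\sum_{(\tilde x,\tilde x+h)\in\edges(T_i)}|Gh-\tau^z_\eps\discbeta(\tilde x,h)|^2,\qquad \edges(T_i)\subseteq\calC^0_\calN .
\end{equation*}
Let $S^*\in\R^{n\times n}_\skw$ be the minimizer in $E^0_\calC[\tau^z_\eps\discbeta]$. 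Testing the minimality of $F$ against $S^*$ and using $\alpha E^0_\calC\le\EC$ gives
\begin{equation*}
\calA_T(F)\le\calA_T(S^*)\le \frac{n(n+1)}{\alpha}\,\EC[\tau^z_\eps\discbeta],
\end{equation*}
so in particular each edge residual satisfies $|Fh_e-\tau^z_\eps\discbeta(\tilde x_e,h_e)|\le c\,\EC[\tau^z_\eps\discbeta]^{1/2}$. Since the edge directions of $T_i$ span $\R^n$ with a condition number depending only on the finitely many admissible covers, $\sum_e|(F-S^*)h_e|^2\ge c|F-S^*|^2$, and together with $\sum_e|(F-S^*)h_e|^2\le 4\calA_T(S^*)$ this yields $|F-S^*|\le c\,\EC[\tau^z_\eps\discbeta]^{1/2}$; as $S^*$ is skew, $|\sym F|\le|F-S^*|\le c\,\EC[\tau^z_\eps\discbeta]^{1/2}$.

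For (i) I would integrate this over $T$, using $|T|=\eps^n|T_i|\le c\eps^n$, to obtain $\int_T|L\discbeta+(L\discbeta)^T|^2\,dy\le c\eps^n\EC[\tau^z_\eps\discbeta]$, and then sum over all $T\in\tetr_\eps^\Omega$ meeting $\Omega'$. The only geometric point is that the cluster $z+\eps\calC$ then lies inside $\Omega$: if $T$ meets $\Omega'$ then $\dist(z,\Omega')\le\tfrac12\eps d_{T_*}$, and since $d_{T_*}\le d_\calC$ (each extreme point of $T_*$ is a vertex, hence belongs to $\calC$) the hypothesis $\dist(\Omega',\partial\Omega)\ge d_\calC\eps$ forces $z\in\clust_\eps^\Omega$. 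Because the assignment $T\mapsto z(T)$ is at most $n!$-to-one, the sum is bounded by $n!\sum_{z\in\clust_\eps^\Omega}\eps^n\EC[\tau^z_\eps\discbeta]=n!\,E_\eps[\discbeta,\Omega]$, which is \eqref{eqLbetasymEcint}.

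For (ii), set $a:=L\discbeta|_{T'}-L\discbeta|_T=F'-F$ and let $\nu$ be the unit normal to the shared face. Since $L\discbeta$ is piecewise constant, $DL\discbeta$ restricted to $T\cap T'$ is $(a\otimes\nu)\,\calH^{n-1}\LL(T\cap T')$ (components $a_{ij}\nu_k$), and a direct computation shows that the density of its antisymmetric part equals $\sqrt2\,|a(\Id-\nu\otimes\nu)|$; in particular it involves $a$ only through its action on the tangent space $\nu^\perp$ of the face. This is where the main obstacle lies: the full jump $|F'-F|$ contains an a priori uncontrolled relative rotation, so one cannot bound it by the energy, and the point is precisely that the antisymmetric gradient does not see the normal component. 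I would then observe that the $n-1$ edges of the common face $T\cap T'=\conv(\vertici(T)\cap\vertici(T'))$ are edges of both $T$ and $T'$ carrying the same bond value $\discbeta(x,h)$, so the edge-residual bound of the first paragraph, applied to each simplex, gives $|(F'-F)h_e|\le c(\EC[\tau^z_\eps\discbeta]+\EC[\tau^{z'}_\eps\discbeta])^{1/2}$ for these $n-1$ directions, which span $\nu^\perp$ with a condition number depending only on the cover. Hence $|a(\Id-\nu\otimes\nu)|\le c(\EC[\tau^z_\eps\discbeta]+\EC[\tau^{z'}_\eps\discbeta])^{1/2}$, and combining with $\calH^{n-1}(T\cap T')\le c\eps^{n-1}$ yields \eqref{eqcurlLbetatetr}.
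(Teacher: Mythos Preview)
Your proof is correct and follows essentially the same approach as the paper: bound the per-simplex distance $|L\discbeta|_T-S^*|$ via the edge residuals and coercivity, then integrate and sum for \eqref{eqLbetasymEcint}, and for \eqref{eqcurlLbetatetr} observe that only the tangential part $[L\discbeta](\Id-\nu\otimes\nu)$ of the jump enters the antisymmetrized gradient and is controlled by the common-edge residuals. Two cosmetic remarks: the factor $n(n+1)$ in $\calA_T(S^*)\le n(n+1)\alpha^{-1}\EC[\tau^z_\eps\discbeta]$ is superfluous since $\edges(T_i)\subseteq\calC^0_\calN$ already gives $\calA_T(S^*)\le E^0_\calC[\tau^z_\eps\discbeta]$; and the shared face has $n$ vertices and hence $\binom{n}{2}$ (not $n-1$) unoriented edges, though of course $n-1$ independent directions suffice to span $\nu^\perp$.
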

In \eqref{eqcurlLbetatetr}, transposition acts on the last two indices of the third-order tensor $DL\discbeta$. If  $n=3$, then
$(D L\discbeta-(DL\discbeta)^T)_{ijk}=\sum_l \epsilon_{jkl} (\curl L\discbeta)_{il}$, where $\epsilon_{123} = 1$ and $\epsilon$ is antisymmetric in all pairs of indices, and similarly for $n=2$. In particular, for $n=2$ and $n=3$ we obtain
\begin{equation}
    |D L\discbeta-(DL\discbeta)^T|
    =
    \sqrt2 |\curl L\discbeta|.
\end{equation}
\begin{proof}
Consider any cluster $z+\eps\calC$, $z\in \clust_\eps^\Omega$. From \eqref{eqdefE0C} and \eqref{eqasscoerc}, there is $S\in \R^{n\times n}_\skw$ such that
\begin{equation}\label{eq224}
	\sum_{(x,h)\in\calC_\calN^0} 
    |Sh-\discbeta(z+\eps x, h)|^2
    =
    \EC^0[\tau^z_\eps\discbeta]
    \le 
    \frac1\alpha \EC[\tau^z_\eps\discbeta].
\end{equation}
Let $T$ be a simplex contained in $z+\eps T_*$. Since all edges of $T$ are contained in the sum in \eqref{eq224}, and $L\discbeta|_T$ is a minimizer of \eqref{eqdefLbeta}, we have
\begin{equation}\label{eq225}
	\sum_{(x,x+\eps h)\in\edges(T)} |L\discbeta|_Th-\discbeta(x,h)|^2  
    \le 
    \sum_{\edges(T)} |Sh-\discbeta(x,h)|^2  
    \le 
    \frac1\alpha \EC[\tau^z_\eps\discbeta].
\end{equation}
With a triangular inequality, $|L\discbeta|_T-S|^2\le c \EC[\tau^z_\eps\discbeta]$. Since $S+S^T=0$,
\begin{equation}\label{eqLbetasymEc}
	\left|L\discbeta|_T+(L\discbeta|_T)^T\right|^2
    \le 
    c \EC[\tau^z_\eps\discbeta],
\end{equation}
the same holds for all tetrahedra contained in $z+\eps T_*$. For every $y\in \Omega'$, there is $z\in \eps\calL$ such that $y\in z+\eps T_*$. By the assumption $\dist(\Omega', \partial\Omega)\ge d_\calC\eps$ we obtain $z+\eps\calC\subseteq\Omega$, so that $z\in \clust_\eps^\Omega$. We multiply \eqref{eqLbetasymEc} by $\calL^n(\eps T_*)$ and sum over all $z\in \clust_\eps^\Omega$ to obtain \eqref{eqLbetasymEcint}.

We now turn to \eqref{eqcurlLbetatetr}. As  $L\xi\in SBV_\loc(\R^n;\R^{n\times n})$, to estimate $DL\xi$ it suffices to consider the faces of the simplices. Fix two simplices $T$, $T'\in \tetr_\eps^\Omega$ with a common edge $(x,x+\eps h)$. Let $z,z'\in \eps\calL$ be nodes  (possibly equal) such that $T\subseteq z+\eps T_*$, $T'\subseteq z'+\eps T_*$. Then, the two values $L\xi|_T$ and $L\xi|_{T'}$  by \eqref{eq225} obey
\begin{equation}\label{eqjumplxih1}
    \bigl|L\discbeta|_Th-\discbeta(x,h)\bigr|^2 
    \le 
    c \EC[\tau^z_\eps\discbeta]
	\text{ and } \bigl|L\discbeta|_{T'}h-\discbeta(x,h)\bigr|^2 
    \le 
    c \EC[\tau^{z'}_\eps\discbeta].
\end{equation}
If $T\cap T'$ is an $n-1$-dimensional face, then the jump over $T\cap T'$ is  $[L\xi] = L\xi|_T-L\xi|_{T'}$ and \eqref{eqjumplxih1} gives
\begin{equation}\label{eqjumplxih}
	\bigl|[L\xi]h\bigr| \le c (\EC[\tau^{z}_\eps\discbeta]+\EC[\tau^{z'}_\eps\discbeta])^{1/2}.
\end{equation}
As the mesh is conforming, \eqref{eqjumplxih} holds for any edge of the face $T\cap T'$, and allows to control all components of the jump $[L\xi]$ except for the one along the normal $\nu$ to $T\cap T'$,
\begin{equation}\label{eqjumplxih2}
	\bigl|[L\xi]-[L\xi]\nu\otimes \nu\bigr| 
    \le 
    c (\EC[\tau^{z}_\eps\discbeta]+\EC[\tau^{z'}_\eps\discbeta])^{1/2}.
\end{equation}
Finally, using $(DL\xi)\LL (T\cap T')=[L\xi]\otimes \nu\calH^{n-1}\LL(T\cap T')$,
\begin{equation}\label{eqcurlLbetatetra}
	|DL\discbeta-(DL\discbeta)^T|( T\cap {T'}) \le2 \calH^{n-1}(T\cap T')
	\bigl|[L\xi]-[L\xi]\nu\otimes \nu\bigr| 
\end{equation}
and \eqref{eqcurlLbetatetr} follows.
\end{proof}

\subsection{Incompatibilities and core region}

The analysis of dislocations will be based on the assumption that the strain $\discbeta:\bonds_\eps^\Omega\to\R^n$ is compatible over most of the domain, with the exceptional set representing, in a continuum picture, the dislocation cores. We shall denote the region  which contains incompatibilities as $\Ce$, see Definition~\ref{definitioncore}. Since we work with a discrete model, the question of compatibility or incompatibility of $\xi$ can only be posed on a length scale larger than the maximal length of one of the segments on which it is defined. We first show that we can reduce any discrete path to an elementary  path, and in the process introduce the relevant length scale $\ccorerad\eps$.

\begin{figure}
 \begin{center}
  \includegraphics[width=9cm]{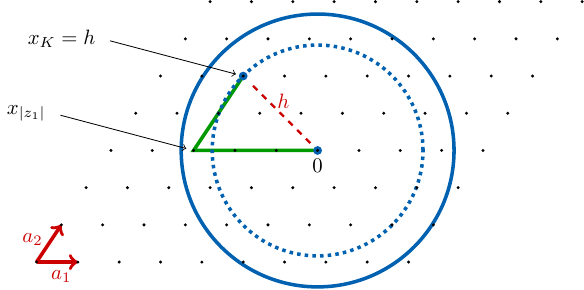}
 \end{center}
\caption{Sketch of the geometry in the construction of Lemma \ref{lemmaccorerad}. The dashed red line represents the segment joining $x_0=0$ with $x_K=h$, the green line represents the polygonal $P$, the blue circle has radius $\ccorerad$. The dotted circle shows that, at least with this construction, we cannot take $\ccorerad=\max\{|h|: h\in\calN\}$.}
\label{fig-lattice2}
\end{figure}%
\begin{lemma}\label{lemmaccorerad}
There is ${\tiny \ccorerad}\geq d_\calC$, depending only on $\calL$, $\calN$, $\calC$, and $\{T_0, \dots, T_{N_T}\}$, such that for all $h\in \calN$ and $\eps>0$ there is an elementary $\eps$-discrete path $P=(x_0, x_1, \dots, x_{K-1},x_K)$ which starts at $x_0=0$, ends at $x_K=\eps h$, and satisfies $|x_j|< \ccorerad\eps$ for all $j$.
\end{lemma}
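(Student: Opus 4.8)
The plan is to reduce to the case $\eps=1$ by scaling and then, for each fixed bond $h\in\calN$, to assemble the required path out of the edges of those mesh simplices that meet the segment $[0,h]$. By definition $\calT^{\R^n}_\eps=\{x+\eps T_i:x\in\eps\calL,\,0\le i\le N_T\}$, so a tuple $P=(x_0,\dots,x_K)$ is an elementary $\eps$-discrete path from $0$ to $\eps h$ if and only if $\frac1\eps P$ is an elementary path (in the sense of Definition~\ref{definitioncurlfree} with $\eps=1$) from $0$ to $h$, and $|x_j|<\ccorerad\eps$ is equivalent to $|\frac1\eps x_j|<\ccorerad$. Since $\calN$ is finite, once each bond $h$ is handled at scale $1$ with a radius bound depending only on $(\calL,\calN,\calC,\{T_0,\dots,T_{N_T}\})$, I may take $\ccorerad$ to be the maximum of $d_\calC$ and of these finitely many bounds.

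Fix then $h\in\calN$ and let $\sigma:=[0,h]$. I would let $\mathcal{S}\subseteq\calT^{\R^n}_1$ be the set of mesh simplices meeting $\sigma$; this set is finite, since every $T\in\mathcal{S}$ lies in the $d_{T_*}$-neighborhood of the bounded set $\sigma$, while the simplices have pairwise disjoint interiors and fixed positive volume. Define the graph $G$ with vertex set $V:=\bigcup_{T\in\mathcal{S}}\vertici(T)\subseteq\calL$ and with edges the pairs in $\edges(T)$ for $T\in\mathcal{S}$. Every edge of $G$ is an edge of a mesh simplex, so any walk in $G$ is an elementary path; moreover $0$ and $h$ are lattice points lying on $\sigma$, hence they are vertices of simplices of $\mathcal{S}$ and belong to $V$. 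For the radius bound, any $v\in V$ is a vertex of some $T\in\mathcal{S}$ which contains a point $p\in\sigma$, so $|v|\le|p|+\diam(T)\le|h|+d_{T_*}$; thus every vertex of $G$ has norm at most $|h|+d_{T_*}$.

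It remains to show that $G$ is connected, for then a walk in $G$ from $0$ to $h$ is the desired path; setting $\ccorerad:=\max\{d_\calC,\ 1+\max_{h\in\calN}(|h|+d_{T_*})\}$ yields the strict bound $|x_j|<\ccorerad\eps$. Connectivity is the main obstacle: because $\sigma$ may pass through low-dimensional faces of the mesh, one cannot simply step from one simplex to the next, and I would instead exploit the conforming structure of Definition~\ref{def-tetrahedra}. Within a single simplex all vertices are pairwise adjacent in $G$, so all vertices of each $T\in\mathcal{S}$ lie in one connected component of $G$; grouping the simplices of $\mathcal{S}$ by the component of their vertices gives $\bigcup_{T\in\mathcal{S}}T=U_1\cup\dots\cup U_m$, a partition into finite unions of closed simplices. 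If $T,T'\in\mathcal{S}$ belong to different groups they share no vertex (a common vertex would lie in two distinct components), and the conforming relation $T\cap T'=\conv(\vertici(T)\cap\vertici(T'))=\emptyset$ then shows that distinct $U_j$ are disjoint closed sets. Their union contains $\sigma$ and is connected, since each $T\in\mathcal{S}$ is connected and meets the connected set $\sigma$; hence $m=1$ and $G$ is connected, which completes the argument.
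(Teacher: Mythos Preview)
Your proof is correct and takes a genuinely different route from the paper's. The paper writes $h=\sum_i z_i a_i$ in the basis $\{a_1,\dots,a_n\}=\vertici(T_0)\setminus\{0\}$ and constructs an explicit ``staircase'' path: first $|z_1|$ steps along $\pm a_1$, then $|z_2|$ steps along $\pm a_2$, and so on; each step $(p,p\pm a_i)$ is an edge of the translate $p+T_0$, so the path is elementary, and the radius is bounded by $|A|\,|A^{-1}|\max_{h\in\calN}|h|$. Your argument is instead topological: the conforming property forces simplices with no common vertex to be disjoint, so the edge-graph of the simplices meeting $[0,h]$ is connected, and any path in that graph does the job. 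Your approach exploits the conforming hypothesis more directly and yields a path that hugs the segment, with the (typically smaller) radius $\max_{h\in\calN}|h|+d_{T_*}$ --- contrast Figure~\ref{fig-lattice2}, where the staircase path wanders farther. The paper's approach, on the other hand, is fully constructive and needs only that the edges of $T_0$ lie in $\calN$, not the full conforming condition. One minor point: your claim that $0$ and $h$ are vertices of simplices in $\mathcal S$ silently uses that every lattice point is a vertex of some mesh simplex containing it; this is immediate since $p+T_0$ has $p$ as a vertex (as $0\in\vertici(T_0)$) and contains $p\in\sigma$.
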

\begin{proof} By scaling, it suffices to consider the case $\eps=1$. We set $A:=\sum_{i=1}^n a_i\otimes e_i$, where the vectors $a_i$ are such that $\vertici(T_0)=\{0,a_1,\dots ,a_n\}$. For $h\in\calN$ we set $z:=A^{-1}h\in\Z^n$. We let $P_1$ be an elementary path joining $x$ with $x+ z_1 A e_1$, which consists of $|z_1|$ steps all along $ A e_1$ (and $|z_1|+1$ vertices). Analogously, $P_2$ joins $x+ A z_1e_1$ with $x+ A (z_1e_1+z_2e_2)$, and so on. The requisite path is then $P:=P_1\oplus P_2\oplus\dots\oplus P_n$, see Figure \ref{fig-lattice2}. The distance from $x$ of every point in the path is then bounded by $|A||z|\le |A|\, |A^{-1}|\,|h|$, so that the statement holds with $\ccorerad:=\max\{1+|A|\,|A^{-1}|\, \max\{|h|: h\in\calN\},d_\calC\}$.
\end{proof}

\begin{definition}\label{definitioncore}
Let $\ccorerad$ be the constant from Lemma \ref{lemmaccorerad}. Given $\discbeta:\bonds_\eps^\Omega\to\R^n$, we define the \emph{core region} $\Ce(\discbeta,\Omega)$ as the set of $x\in \Omega_{\ccorerad\eps}$ such that there is an $\eps$-discrete closed path $P$ (in the sense of Definition \ref{definitioncurlfree}) contained in $\eps\calL\cap B_{\ccorerad\eps}(x)$ with $\circu(\discbeta, P)\ne 0$. If the domain is clear from the context we simply write $\Ce(\xi)$.
\end{definition}

In some simple cases, the core can alternatively be defined as the union of the lattice tetrahedra with nonzero circulation. For example, in two dimensions, if $\calL=\Z a_1+\Z a_2$ and $\calN=\{\pm a_1,\pm a_2, \pm (a_1+a_2)\}$, then a simpler definition of $\Ce$ can be taken as the union of lattice triangles of the type $\conv\{x,x+a_1,x+a_1+a_2\}$ or $\conv\{x,x+a_2,x+a_1+a_2\}$ such that the circulation around the boundary is nonzero, as was done in \cite{AlicandroDelucaGarroniPonsiglione2014}. This simple procedure, however, does not generalize to more complex lattices. Indeed, if $\calN$ is replaced by the more symmetric version $\calN = \{\pm a_1, \pm a_2, \pm (a_1+a_2), \pm(a_1-a_2)\}$, then the simple procedure above fails to account for $\discbeta(\cdot, a_1-a_2)$.

\begin{remark}\label{remarkexactsimplex}
Let $\discbeta:\bonds_\eps^\Omega\to\R^n$ and $T\in \tetr_\eps^\Omega$ with $B_{\ccorerad\eps}(T)\subseteq\Omega$. If $T$ is not contained in $\Ce(\discbeta)$, then $\discbeta$ is exact on $T$. To see this, let $x\in T$, which implies $B_{\ccorerad \eps}(x)\subseteq\Omega$, and let $P$ be a closed path in $\vertici(T)$. As $\ccorerad\eps>\diam(T)$, we have $P\subseteq B_{\ccorerad\eps}(x)$. Therefore, $\circu(\discbeta,P)\ne0$ implies $x\in \Ce(\discbeta)$. Lemma \ref{lemmaelemenmtarycontinuouscirc} \ref{lemmaoutofcoreLb} extends this observation to appropriate balls.
\end{remark}

 \begin{lemma}\label{lemmaelemenmtarycontinuouscirc}
Let $\Omega\subseteq\R^n$ be open,
 $\discbeta:\bonds_\eps^\Omega\to\R^n$, $\omega:=\Omega_{2\ccorerad \eps}\setminus {\Ce(\discbeta)}$,
 $\omega':=\{x: B_{\ccorerad \eps}(x)\subseteq\omega\}$.

 \begin{enumerate}
\item\label{lemmaelemenmtarycontinuouscirci}  Let $P=(x_1, \dots, x_K)$ be a closed elementary $\eps$-discrete path, $P\subseteq \omega$. Then $\circu(\discbeta, P)=\int_{\gamma(P)} L\discbeta \tau_\gamma d\calH^1$, where $\gamma(P)$ is the union of the segments $[x_k, x_{k+1}]$ and $\tau_\gamma:\gamma(P)\to S^{n-1}$ denotes the unit tangent vector, oriented on each segment from $x_k$ to $x_{k+1}$.

\item\label{lemmaelemenmtarycontinuouscirccurl}  
$DL\discbeta=(DL\discbeta)^T$ as distributions on $\interior(\omega)$.

\item\label{lemmaelementarycirc} 
For every closed curve $\gamma\subseteq\omega'$, $$
\int_{\gamma} L\discbeta \tau_\gamma d\calH^1\in \eps\calB.
$$

\item\label{lemmaelemenmtarycontinuouscircinonel}  
Let $P=(x_1, \dots, x_K)$ be a closed $\eps$-discrete path, $P\subseteq \omega'$. Then $\circu(\discbeta, P)=\int_{\gamma(P)} L\discbeta \tau_\gamma d\calH^1$.

\item \label{lemmaoutofcorecircu}\label{lemmaoutofcoreLb}
Let $x_0$, $r>0$ be such that $B_{r+3\ccorerad\eps }(x_0)\subseteq\Omega\setminus \Ce(\discbeta,\Omega)$. Then, $\discbeta$ is exact on $\eps\calL\cap B_r(x_0)$.
 \end{enumerate}

\end{lemma}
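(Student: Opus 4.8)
The plan is to hang all five assertions on a single structural fact and then propagate it. By Remark~\ref{remarkexactsimplex}, $\discbeta$ is exact on every simplex $T\in\tetr_\eps^\Omega$ that meets $\omega$: any such $T$ contains a point of $\omega=\Omega_{2\ccorerad\eps}\setminus\Ce(\discbeta)$, hence is not contained in $\Ce(\discbeta)$, and since $\diam T\le\ccorerad\eps$ its $\ccorerad\eps$-neighbourhood lies in $\Omega$. On such simplices $L\discbeta$ is described explicitly by Remark~\ref{remark-Lcsiexact}. For (i) I would argue edge by edge: an edge $(x_{k-1},x_k)$ of an elementary path in $\omega$ lies in some $T\in\tetr^{\R^n}_\eps$ on which $\discbeta$ is exact, so with $h=(x_k-x_{k-1})/\eps$ we have $L\discbeta|_T h=\discbeta(x_{k-1},h)$; as $L\discbeta$ is constant along the segment, that edge contributes $\eps\discbeta(x_{k-1},h)$ to the integral, and summing gives $\circu(\discbeta,P)=\int_{\gamma(P)}L\discbeta\,\tau_\gamma\,d\calH^1$. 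For (ii) I would use that $L\discbeta$ is piecewise constant, so $DL\discbeta$ is carried by the $(n-1)$-faces, with face contribution $[L\discbeta]\otimes\nu\,\calH^{n-1}\LL F$; on any face meeting $\interior(\omega)$ both adjacent simplices are exact, so by Remark~\ref{remark-Lcsiexact} the jump has the form $a\otimes\nu$, whose contribution is symmetric in the last two indices, giving $DL\discbeta=(DL\discbeta)^T$ on $\interior(\omega)$.

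Parts (iv), (iii), (v) then build on (i) and (ii). For (iv) I would straighten a general discrete path $P\subseteq\omega'$ into an elementary one edge by edge: Lemma~\ref{lemmaccorerad} replaces each edge $(x_{k-1},x_k)$ by an elementary path $Q_k$ with the same endpoints inside $B_{\ccorerad\eps}(x_{k-1})\subseteq\omega$, the inclusion holding because $x_{k-1}\in\omega'$. The circulation is preserved, since the edge followed by $Q_k$ reversed is a closed loop in $B_{\ccorerad\eps}(x_{k-1})$ around a point of $\Omega_{\ccorerad\eps}\setminus\Ce(\discbeta)$, hence of vanishing circulation by the definition of $\Ce$; and the line integral is preserved, since the segment $[x_{k-1},x_k]$ and $\gamma(Q_k)$ share endpoints and lie in the convex ball $B_{\ccorerad\eps}(x_{k-1})\subseteq\interior(\omega)$, where $L\discbeta$ is curl-free by (ii) and hence a gradient. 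Concatenating the $Q_k$ into a closed elementary path $P'\subseteq\omega$ and applying (i) finishes (iv).

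For (iii) I would first approximate the closed curve $\gamma\subseteq\omega'$ by a closed discrete path $P\subseteq\omega'$ whose trace is homotopic to $\gamma$ within $\omega$; the $\ccorerad\eps$ collar built into the definition of $\omega'$ is precisely what keeps the approximating lattice polygon and the homotopy inside $\omega$. Homotopy invariance of the integral of the curl-free field from (ii) then gives $\int_\gamma L\discbeta\,\tau_\gamma\,d\calH^1=\int_{\gamma(P)}L\discbeta\,\tau_\gamma\,d\calH^1=\circu(\discbeta,P)$ by (iv), and the value lies in $\eps\calB$ because for the strains of the model $\discbeta=d_\eps u-\discbetapl$, so that $\circu(\discbeta,P)=-\circu(\discbetapl,P)\in\eps\calB$ by Remark~\ref{remarkcircinL}, the exact part contributing nothing by Lemma~\ref{lemmadiscretepath}. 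For (v) I would check from $B_{r+3\ccorerad\eps}(x_0)\subseteq\Omega\setminus\Ce(\discbeta)$ that $B_r(x_0)\subseteq\omega'$ and $B_{r+\ccorerad\eps}(x_0)\subseteq\interior(\omega)$; then for any closed discrete path $P\subseteq B_r(x_0)$, part (iv) gives $\circu(\discbeta,P)=\int_{\gamma(P)}L\discbeta\,\tau_\gamma\,d\calH^1$, and since $\gamma(P)$ is a closed curve in the convex set $B_{r+\ccorerad\eps}(x_0)$ on which $L\discbeta$ is a gradient by (ii), this integral, and hence the circulation, vanishes, i.e.\ $\discbeta$ is exact on $\eps\calL\cap B_r(x_0)$.

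The routine parts are the distance bookkeeping and the edge-by-edge identities. The genuinely delicate step is the passage from a general closed curve to a discrete path in (iii): one must build the approximating lattice polygon together with a homotopy that never leaves $\omega$, so that curl-freeness may legitimately be invoked. Intertwined with this is the quantization input, which is the one place that uses the arithmetic structure of the admissible strains (Remark~\ref{remarkcircinL}) rather than exactness on simplices; here one must be careful that $\discbeta$ is genuinely an elastic strain $d_\eps u-\discbetapl$, since the inclusion in $\eps\calB$ fails for a wholly arbitrary admissible map.
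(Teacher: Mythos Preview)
Your proposal is correct and follows essentially the same route as the paper: exactness on each simplex meeting $\omega$ (via Remark~\ref{remarkexactsimplex}), edge-by-edge identification of $L\discbeta|_Th$ with $\discbeta(x,h)$ for (i), the jump structure $a\otimes\nu$ for (ii), Lemma~\ref{lemmaccorerad} plus the definition of $\Ce$ to straighten a general path into an elementary one for (iv), and the simply-connectedness of a ball for (v). The only visible difference is the order in which you treat (iii) and (iv): the paper deforms a general curve directly onto an elementary lattice path (by projection onto simplex edges) and invokes (i) and Remark~\ref{remarkcircinL}, whereas you pass through a general discrete path and use (iv); both are valid. Your closing remark that the quantization in (iii) tacitly requires $\discbeta=d_\eps u-\discbetapl$ is a fair observation---the paper's hypothesis says only $\discbeta:\bonds_\eps^\Omega\to\R^n$, but the appeal to Remark~\ref{remarkcircinL} indeed uses the plastic-strain structure, which is how the lemma is applied throughout.
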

The trace of the tangential component $L\discbeta\tau$ is well defined by Remark \ref{remark-Lcsiexact}.
\begin{proof}
\ref{lemmaelemenmtarycontinuouscirci}:
We observe that, if $T\in \calT^{\R^n}_\eps$ and $T\cap \omega\ne\emptyset$, then $B_{\ccorerad\eps}(T)\subseteq\Omega$, so that by Remark~\ref{remarkexactsimplex} $\xi$ is exact on $T$. Consider any pair $(x_k, x_{k+1})$ in the elementary path $P$ and any simplex $T\in \calT^\Omega_\eps$ with $(x_k, x_{k+1})\in \edges(T)$. Since $\discbeta$ is exact on this simplex, by Remark~\ref{remark-Lcsiexact} we have
\begin{equation}
 \discbeta(x_k, h)=L\discbeta|_T h   \hskip1cm 
\text{ with } h=\frac{x_{k+1}-x_k}\eps.
\end{equation}
Therefore,
\begin{equation}
    \int_{[x_k, x_{k+1}]} L\discbeta \tau_\gamma d\calH^1 
    = 
    L\discbeta(x_{k+1}-x_k) = \eps \discbeta(x_k, \frac{x_{k+1}-x_k}\eps).
\end{equation}
The assertion follows summing over all segments.

\ref{lemmaelemenmtarycontinuouscirccurl}:
Consider two simplices, say, $T_1$ and $T_2$, which share an $n-1$-dimensional face, which in turn intersects $\omega$. As above, this implies that $\xi$ is exact on both. For every common edge $(x,x+\eps h)$ we have that $L\discbeta|_{T_1}h = L\discbeta|_{T_2}h = \discbeta(x,h)$ (see Remark~\ref{remark-Lcsiexact}). Therefore, the tangential component of $L\discbeta$ on the common face $T_1\cap T_2$ has the same value on the two sides and the proof is concluded as in the second part of the proof of Proposition~\ref{prop-Lcsi}.

\ref{lemmaelementarycirc}: 
We first observe that the integral is well defined. Indeed, for any $T\in \calT^\Omega_\eps$ and $\calH^1$-almost any $x\in  \gamma\cap\partial T$ we have that $\tau_\gamma(x)$ is tangent to $\partial T$ in $x$ (in the sense that it belongs to each of the planes spanned by the faces of $\partial T$ which contain $x$). As shown in proving \ref{lemmaelemenmtarycontinuouscirccurl}, this implies that the tangential component $L\xi\tau_\gamma$ is well-defined $\calH^1$-almost everywhere on $\gamma$.

If $\gamma$ is an elementary path as in \ref{lemmaelemenmtarycontinuouscirci}, the result is immediate from Remark~\ref{remarkcircinL}. Otherwise, $\gamma$ is deformed into an elementary path (for example, projecting on the edges of the tetrahedra it intersects) and  \ref{lemmaelemenmtarycontinuouscirccurl} is used.

\ref{lemmaelemenmtarycontinuouscircinonel}:
We construct an elementary path $P'$ with the same circulation. For every $i\in \{1,\dots, K-1\}$, let $h_i\in\calN$ be such that $x_{i+1}=x_i+\eps h_i$. By Lemma \ref{lemmaccorerad} there is an elementary 
$\eps$-discrete path $P_{i}$ joining $x_i$ with $x_{i+1}$ and contained in $\eps\calL \cap B_{\ccorerad\eps}(x_i)$. We observe that $\circu(\discbeta, P_i\oplus (x_{i+1},x_i ))=0$. Indeed, the path $P_i\oplus (x_{i+1},x_i )$ is contained in the ball
$B_{\ccorerad\eps}(x_i)\subseteq \Omega$. If $\circu(\discbeta, P_i\oplus (x_{i+1},x_i ))\ne 0$, then $x_i$ is a core point by definition.

We define $P'$ as the path obtained from $P$ by replacing every edge $(x_i,x_{i+1})$ by $P_i$. Then $\circu(\discbeta,P)=\circu(\discbeta,P')$, $P'$ is an elementary path, and $P'\subseteq \omega$. By \ref{lemmaelemenmtarycontinuouscirci}, we have $\circu(\discbeta,P')=\int_{\gamma(P')} L\discbeta \tau_{P'} d\calH^1$. By \ref{lemmaelemenmtarycontinuouscirccurl}, we have $D L\xi=(DL\xi)^T$ on $\omega$. By Stokes theorem, $\int_{\gamma(P')} L\discbeta \tau_{P'} d\calH^1= \int_{\gamma(P)} L\discbeta \tau_{P} d\calH^1$, and \ref{lemmaelemenmtarycontinuouscircinonel} follows.

\ref{lemmaoutofcorecircu}: We have to show that, for every $\eps$-discrete closed path $P\subseteq B_r(x_0)$, $\circu(\discbeta,P)=0$. The assertion follows from \ref{lemmaelemenmtarycontinuouscircinonel} and \ref{lemmaelemenmtarycontinuouscirccurl} using $B_r(x_0)\subseteq \omega'$ and the fact that $B_r(x_0)$ is simply connected.
\end{proof}

\subsection{Configurations with dilute dislocations in $\R^3$}

Working in $\R^3$, dislocations, here interpreted in the sense of the set $\Ce(\discbeta)$, are concentrated along one-dimensional sets. The diluteness condition is then phrased naturally in terms of the geometry of this set. To this end, following \cite{ContiGarroniOrtiz2015} it is convenient to introduce the class of dilute dislocations identified with a class of divergence-free measures concentrated on polyhedral curves. 

\def\segment{s}
\begin{definition}\label{defdiluteness-curve}
Let $\Omega\subseteq \R^3$ be open and bounded. Given two positive parameters $\alpha,k > 0$, we say that a  polyhedral curve $\gamma\subseteq\Omega$ is $(k,\alpha)$-{\em dilute} if it is the union of finitely-many closed segments $\segment_j\subseteq\overline\Omega$  such that
	\begin{enumerate}
\item every $\segment_j$ has length at least $k$;
\item if $\segment_j$ and $\segment_i$ are disjoint, then their distance is at least $\alpha k$;
\item if the segments $\segment_j$ and $\segment_i$ are not disjoint, then they share an endpoint and the angle between them is at least $\alpha$;
		\item $\gamma$ does not have end points inside $\Omega$.
	\end{enumerate}
The set of $(k,\alpha)$-dilute polyhedral curves is denoted by $P(\Omega, k,\alpha)$.
\end{definition}

Next, we define the set of admissible configurations with dilute dislocation distributions.
\begin{definition}\label{def-Ade}
Let $\Omega\subseteq \R^3$ be open and bounded. For every $\eps>0$, $k>0$, $\alpha>0$, $m\ge \ccorerad$, we denote by $\Ade(\Omega,k,\alpha,m)$ the set of pairs $(\gamma,\xi)$ satisfying:
\begin{enumerate}
\item $\discbeta:\bonds_\eps^\Omega\to\R^n$ is admissible and $\discbeta=d_\eps u-\discbetapl$, where $u$ is a lattice deformation and $\discbetapl$ a discrete plastic strain in the sense of Definition~\ref{defbetapl};
\item\label{def-Ade-core} $\gamma\in P(\Omega, k,\alpha)$ with $\Ce(\discbeta)\subseteq B_{m\eps}(\gamma)$.
\end{enumerate} 
\end{definition}

The asymptotic analysis will be performed assuming that  the diluteness parameters $k$ and $\alpha$ are much larger than the lattice spacing $\eps$, in the sense that
\begin{equation}\label{eqdefheps}
    \lim_{\eps\to0} \frac{\log (1/(\alpha_\eps k_\eps))}{\log (1/\eps)}
    =
    \lim_{\eps\to0} \alpha_\eps
    =
\lim_{\eps\to0} k_\eps
    =
    0 \,.
\end{equation}
The core radius $m$ in Definition~\ref{def-Ade}\ref{def-Ade-core} will be kept fixed. We remark that our analysis holds, with minor changes in the proof, if $m=m_\eps\to\infty$ sufficiently slowly. Given $k_\eps$ and $\alpha_\eps$ that obey \eqref{eqdefheps} and $m\ge k_*$, we shall briefly write
\begin{equation}\label{eq-Ade-short}
    \Ade(\Omega)
    :=
    \{\xi\, : (\gamma,\xi)\in \Ade(\Omega,k_\eps,\alpha_\eps,m) \hbox{ for some } \gamma\in P(\Omega,k_\eps,\alpha_\eps)\}.
\end{equation}

\begin{remark}\label{rem-dilute-gamma}
Using \eqref{eqdefheps}  we see that any sequence $\gamma_\eps$ in $P(\Omega, k_\eps,\alpha_\eps)$ satisfies
\begin{equation} \label{eq-H1-gamma}
	\lim_{\eps\to0}	\calH^1(\gamma_\eps)\eps^s=0 ,
	\end{equation}
for every $s>0$. It is indeed enough to observe that if $\gamma_\eps\in P(\Omega, k_\eps,\alpha_\eps)$ we can construct $N_\eps$ disjoint cylinders of height $k_\eps/2$ and radius $\alpha_\eps^2 k_\eps$ whose axis are contained in $\gamma_\eps$. The total volume of these cylinders cannot exceed the volume of $\Omega$, so that 
	$$
	\calH^1(\gamma_\eps)
    \leq 
    N_\eps \diam(\Omega)
    \leq 
    C\frac{1}{\alpha_\eps^4k_\eps^3} ,
	$$
	which gives \eqref{eq-H1-gamma}.
\end{remark}

The diluteness condition permits to estimate the curl of $L\xi$ in terms of the energy and the length of the curve $\gamma$.

\begin{lemma}\label{lemmacoremollif}
Let $\Omega\subseteq\R^3$ be open and bounded. Let $(\gamma,\discbeta)\in \Ade(\Omega,k,\alpha,m)$ for some $\alpha\in(0,1]$, $m\in[\ccorerad,\infty)$, $k\in(0,\infty)$. Assume $ m\eps\le k$ and let $\Omega'\subseteq\Omega$ be open, with $\dist(\Omega',\partial\Omega)\ge 2m\eps$. 	Then,
	\begin{equation}
		\begin{split}
    |\curl L\discbeta|(\Omega')
    \le 
    c m
    (\calH^1(\gamma\cap B_{2m\eps}(\Omega')))^{1/2}  
    E_\eps^{1/2}[\discbeta,\Omega] .
		\end{split}
	\end{equation}
\end{lemma}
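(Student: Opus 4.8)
The statement wants to bound the total variation of $\curl L\discbeta$ on $\Omega'$ by the energy, with the length of $\gamma$ entering through a square root. The natural strategy is to localize: decompose $\Omega'$ into the region near $\gamma$, where incompatibilities (and hence a nonzero curl) are allowed, and the region away from $\gamma$, where Lemma~\ref{lemmaelemenmtarycontinuouscirc}\ref{lemmaelemenmtarycontinuouscirccurl} forces $DL\discbeta=(DL\discbeta)^T$, i.e.\ $\curl L\discbeta=0$ as a distribution. Concretely, by Definition~\ref{def-Ade}\ref{def-Ade-core} the core region satisfies $\Ce(\discbeta)\subseteq B_{m\eps}(\gamma)$, so outside $B_{m\eps}(\gamma)$ the strain is exact on every relevant simplex. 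Therefore the measure $\curl L\discbeta$ is concentrated on the faces of simplices that meet $B_{m\eps}(\gamma)$, and I only need to sum the face-by-face jump estimate \eqref{eqcurlLbetatetr} over those simplices.

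\textbf{Carrying this out.} First I would fix the collection $\calT'$ of simplices $T\in\tetr_\eps^\Omega$ whose closure meets $B_{2m\eps}(\Omega')$ and which are not exact; by Remark~\ref{remarkexactsimplex} and the core inclusion, every such $T$ satisfies $T\cap B_{m\eps}(\gamma)\ne\emptyset$, hence $T\subseteq B_{(m+d_\calC)\eps}(\gamma)\subseteq B_{2m\eps}(\gamma)$ (using $m\ge\ccorerad\ge d_\calC$). Applying Proposition~\ref{prop-Lcsi}, specifically \eqref{eqcurlLbetatetr} together with the identity $|DL\discbeta-(DL\discbeta)^T|=\sqrt2\,|\curl L\discbeta|$ in $n=3$, gives
\begin{equation*}
    |\curl L\discbeta|(\Omega')
    \le
    c\sum_{T\in\calT'} \eps^{2}
    (\EC[\tau^{z(T)}_\eps\discbeta])^{1/2},
\end{equation*}
where $z(T)$ is a node with $T\subseteq z(T)+\eps T_*$ (each interior face is shared by two simplices, contributing both neighbours, which only changes constants). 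Now I apply Cauchy--Schwarz to the sum: it is bounded by $(\#\calT')^{1/2}$ times $(\sum_{T}\eps^{4}\EC[\tau^{z(T)}_\eps\discbeta])^{1/2}$, and the latter factor is controlled by $\eps\, E_\eps^{1/2}[\discbeta,\Omega]$ after recalling the energy definition \eqref{eqdeftotalenergy}, in which each cluster carries weight $\eps^3$, so $\sum_T\eps^4\EC\le c\,\eps\,E_\eps[\discbeta,\Omega]$ (each cluster energy is counted a bounded number of times).

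\textbf{The counting step.} The remaining, and I expect the main, obstacle is bounding $\#\calT'$, the number of non-exact simplices near $\gamma$, by $c\,\eps^{-2}\calH^1(\gamma\cap B_{2m\eps}(\Omega'))$. Since all these simplices lie in the tube $B_{2m\eps}(\gamma)$ and each simplex has volume $\eps^{3}\calL^3(T_*)/3!$, while it suffices to cover $\gamma\cap B_{2m\eps}(\Omega')$ by $c\,\calH^1(\gamma\cap B_{2m\eps}(\Omega'))/\eps$ balls of radius $\comparable\,m\eps$ each of which meets only a bounded number ($\sim m^3$) of simplices, one gets $\#\calT'\le c\,m^3\,\calH^1(\gamma\cap B_{2m\eps}(\Omega'))/\eps$. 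Here the polyhedral structure of $\gamma$ (Definition~\ref{defdiluteness-curve}) is convenient but the bound only uses the tube volume/length comparison. Combining the Cauchy--Schwarz estimate with $(\#\calT')^{1/2}\le c\,m^{3/2}(\calH^1(\gamma\cap B_{2m\eps}(\Omega'))/\eps)^{1/2}$ and the factor $\eps\,E_\eps^{1/2}$ yields exactly
\begin{equation*}
    |\curl L\discbeta|(\Omega')
    \le
    c\,m^{3/2}\,(\calH^1(\gamma\cap B_{2m\eps}(\Omega')))^{1/2}\,E_\eps^{1/2}[\discbeta,\Omega],
\end{equation*}
so the only point to be careful about is tracking the power of $m$; if the claimed linear dependence $c\,m$ is to be sharp one must count simplices more efficiently (e.g.\ using that the tube cross-section contributes $\sim m^2$ rather than $m^3$ simplices per unit length), which is where I would focus the technical effort.
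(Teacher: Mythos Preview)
Your approach matches the paper's: restrict to the tube $B_{m\eps}(\gamma)$ via Lemma~\ref{lemmaelemenmtarycontinuouscirc}\ref{lemmaelemenmtarycontinuouscirccurl}, sum the face estimate \eqref{eqcurlLbetatetr}, and apply Cauchy--Schwarz. The only slip is in the counting step: to cover $\gamma\cap B_{2m\eps}(\Omega')$ by balls of radius $m\eps$ you need $c\,\calH^1/(m\eps)$ balls, not $c\,\calH^1/\eps$; since each such ball meets at most $c\,m^3$ simplices you get $\#\calT'\le c\,m^2\,\calH^1(\gamma\cap B_{2m\eps}(\Omega'))/\eps$, and then the combination yields the stated linear factor $c\,m$. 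The paper reaches this more directly by bounding $\sum_{T}|T|\le |B_{2m\eps}(\gamma\cap B_{2m\eps}(\Omega'))|\le c\,(m\eps)^2\,\calH^1(\gamma\cap B_{2m\eps}(\Omega'))$ in the H\"older step, which is exactly the tube cross-section argument you suggest at the end. (Minor: $(\sum_T\eps^4\EC)^{1/2}\le c\,\eps^{1/2}E_\eps^{1/2}$, not $\eps\,E_\eps^{1/2}$; your final display is nevertheless consistent with the correct power.)
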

\begin{proof}
Denote $\delta:=m\eps\in (0,k]$. We recall that $\curl L\discbeta$ is a measure concentrated on the faces of the tetrahedra in $\mathcal T_\eps^{\R^n}$ and absolutely continuous with respect to $\calH^2$. By $\Ce(\xi)\subseteq B_\delta(\gamma)$ and Lemma \ref{lemmaelemenmtarycontinuouscirc}\ref{lemmaelemenmtarycontinuouscirccurl}, $\curl L\discbeta=0$ on $\Omega'\setminus \overline{B_\delta(\gamma)}$, so that we have only to deal with the set 
\begin{equation}\label{eqomegabdeltaomega}
    \omega
    :=
    \Omega'\cap \overline{B_\delta(\gamma)}
    \subseteq
\Omega'\cap \overline{B_\delta}(\gamma\cap B_\delta(\Omega')). 
\end{equation}
Pick two tetrahedra $T\ne T'\in\mathcal T_\eps^{\R^n}$ that share a face and such that $T\cap T'\cap \omega\ne\emptyset$. By Proposition \ref{prop-Lcsi},
\begin{equation}
\begin{split}
    |\curl L\discbeta|(T\cap T')
    \le& c\eps^2 (\EC^{1/2}[\tau^z_\eps\discbeta]+
    \EC^{1/2}[\tau^{z'}_\eps\discbeta]),
\end{split}
\end{equation}
where $z$, $z'\in \eps\calL$ are such that $T\subseteq z+\eps T_*$, $T'\subseteq z'+\eps T_*$. As $m\ge k_*\ge d_\calC\ge d_\calT$, this implies $T\subseteq B_{\delta}(\omega)$, $z\in \clust^\Omega_\eps$, and the same for $T'$ and $z'$. By H\"older's inequality and 
$\eps^3\le c |T|$, we obtain
\begin{equation}
\begin{split}
\sum_{T\ne T': T\cap T'\cap\omega\ne\emptyset}
|\curl L\discbeta|(T\cap T')
    \le & 
    c\eps^{-1} \left(\sum_{T\subseteq B_{\delta}(\omega)} 
    |T|\right)^{1/2} \left(\eps^3\sum_{z\in \clust^\Omega_\eps} \EC[\tau^z_\eps\discbeta]\right)^{1/2} 
    \\ \le & 
    c\eps^{-1} |B_{\delta}(\omega)|^{1/2} E_\eps^{1/2}[\xi, \Omega].
\end{split}
\end{equation}
It remains to estimate the measure of $B_{\delta}(\omega)$. For every segment $s$ with $\calH^1(s)\ge\delta$, $|B_{2\delta}(s)|=\frac{4\pi}3(2\delta)^3+\pi  (2\delta)^2 \calH^1(s)\le c \delta^2\calH^1(s)$. The set $\gamma\cap B_\delta(\Omega')$ entering \eqref{eqomegabdeltaomega} is contained within a finite union of segments, all belonging to $\gamma \cap B_{2\delta}(\Omega')$ and having length at least $\delta$ (we use here the definition of diluteness and $\delta\le k$). Summing over all these segments,
\begin{equation}
    |B_\delta(\omega)|
    \le
|B_\delta(\Omega')
\cap
B_{2\delta}(\gamma\cap B_\delta(\Omega'))|
    \le 
    c \delta^2\calH^1(\gamma \cap B_{2\delta}(\Omega')).
\end{equation}
Combining the previous two equations concludes the proof.
\end{proof}

\section{Local continuum lower bound}
We proceed to study the relation of the discrete energy to a continuum elastic energy in a region away from the core, where the deformation $\xi$ is locally exact, and hence can, locally, be seen as originating from the discrete deformation gradient of a displacement field $u$. In Section~\ref{subsec-moll}, we start by introducing two different interpolation schemes, which allow to pass locally from discrete displacements and deformations to corresponding continuum fields. The key result is Theorem~\ref{theomollifydisccont}, which provides a lower bound on the discrete energy in terms of a corresponding continuum energy, and is presented and proved in Section~\ref{secelastmollif}. The result is based on mollification and convexity of the cluster energy, and uses only marginally the fact that it is quadratic. It will be a crucial ingredient in proving the lower bound of the $\Gamma$-convergence result.

\label{seclocalcontinuum}
\subsection{Interpolations and mollifications}\label{subsec-moll}
In order to relate discrete and continuum deformations we shall use different interpolation schemes.  They are all based on the tetrahedra introduced in Section \ref{seckinematics}. Different quantities have different natural interpolations. For quantities that are defined on vertices, such as the displacement $u$, it is natural to use piecewise constant or piecewise affine interpolations.

\begin{definition}\label{definterpolation}
Let $u:A\subseteq\eps\calL\to\R^n$. The piecewise affine interpolation $I_\eps u$ is obtained setting $I_\eps u=u$ on $A$ and $I_\eps u$ affine on each $T\in \calT^A_\eps$.

For every map $v:A\subseteq\eps \calL\to X$ we define the piecewise constant interpolation $J_\eps v: \cup_{x\in A} \interior(x+\eps T_*)\to X$ by setting, for every
$x\in A$, $J_\eps v:=v(x)$ on $\interior(x+\eps  T_*)$.
\end{definition}

We remark that the piecewise affine interpolation $I_\eps$ will only be used for deformations $u$, while the piecewise constant interpolation $J_\eps$ can be used for any lattice function, including in particular the deformation $u$,  the strain $\discbeta(\cdot, h)$, and the slip $\discxi_i(\cdot,h)$. It defines a map in $L^\infty_\loc$. As the mesh is conforming (Definition~\ref{def-tetrahedra}), the function $I_\eps u$ is continuous.

\newcommand\Plat{P_{\eps\calL}}
\newcommand\Pt{P_{\eps T_*}}
\begin{remark}\label{remdepsucont}
We can extend the discrete differential $d_\eps$ to functions defined on generic sets $E\subseteq \R^n$ (see \eqref{eqdefdepsu} in Definition~\ref{defdepsu}), setting
\begin{equation}
    d_\eps f(x,h):=\frac{f(x+\eps h)-f(x)}{\eps}
\end{equation}
for any $\eps>0$, $x\in \R^n$, $h\in \calN$ such that $x$ and $x+\eps h$ are in $E$. Then, we obtain 
\begin{equation}\label{depsjepscommute}
    d_\eps J_\eps f = J_\eps d_\eps f
\end{equation}
$\calL^n$-almost everywhere on the set where the two are defined. 

To see this, let $\Plat:\R^n\to\eps \calL$ and $\Pt:\R^n\to\eps T_*$ be such that $x=\Plat (x) + \Pt (x)$ for any $x\in\R^n$ (this decomposition is unique outside the union of the boundaries of the $\eps T_*$, which is a null set), so that $J_\eps f(x)=f(\Plat(x))$ almost everywhere. Then, for almost every $x$, 
\begin{equation}
    (J_\eps d_\eps f)(x,h)
    = 
    d_\eps f (\Plat (x), h)
    =
    \frac{ f(\Plat(x)+\eps h)- f(\Plat(x))}{\eps}
\end{equation}
and, since $h\in\calN\subset\calL$ implies $\Plat(x+\eps h)=\Plat(x)+\eps h$,
\begin{equation}
 ( d_\eps J_\eps f)(x,h)=\frac{ (J_\eps f)(x+\eps h)- (J_\eps f)(x)}{\eps}
 =\frac{ f(\Plat(x)+\eps h)- f(\Plat(x))}{\eps}.
\end{equation}
This concludes the proof of \eqref{depsjepscommute}.
\end{remark}

We first show how coercivity permits to control the symmetrized gradient of the interpolation.

\begin{remark}\label{lemmacompactclusterkorn}
We observe that the reference energy $\EC^0$ was defined so that it controls the symmetric part of the strain of the piecewise affine interpolation $I_\eps u$. Specifically, for $u:\calC\to\R^n$, 
{and shortening $I:=I_1$,}
\begin{equation}
    \int_{T_*} |DIu+(DIu)^T|^2 dx \le c \EC^0[du].
\end{equation}
To see this, it suffices to prove the assertion for a single $T_i$ (the argument is similar to the one in Proposition~\ref{prop-Lcsi}).
Let $\zeta_i=DIu|_{T_i}\in\R^{n\times n}$. For every $F\in\R^{n\times n}$ and every $\xi:\calC^0_\calN\to\R^n$, we have
\begin{equation*}
   |\discbeta-\discbeta^F|^2
   =
   \sum_{(x,h)\in\calC^0_\calN} |\discbeta(x,h)-Fh|^2 .
\end{equation*}
Let $x,y\in \vertici(T_i)\subseteq \calC^0$, $x\ne y$. Then
$du(x,y-x)=\zeta_i(y-x)$. Therefore,
 \begin{equation*}
   |du-\discbeta^F|^2\ge \sum_{(x,y)\in \edges(T_i)} |\zeta_i(y-x)-F(y-x)|^2
   \ge c |\zeta_i-F|^2 ,
 \end{equation*}
where 
the constant depends only on the shape of $T_i$. 
Recalling \eqref{eqdefE0C}, 
\begin{equation*}
\EC^0[du]\ge c \min_{S\in \R^{n\times n}_\skw} |\zeta_i-S|^2=
    c |\zeta_i+\zeta_i^T|^2.
 \end{equation*}
\end{remark}

\begin{remark}\label{lemmadiffiepsjepsu}
For functions whose gradients are controlled the two interpolations are close. Specifically, for $\omega\subset\subset\Omega$ with $B_{d_{T_*}\eps/2}(\omega) \subseteq \Omega$,
\begin{equation}
    \|I_\eps u - J_\eps u\|_{L^2(\omega)}^2 
    \le 
    c \eps^2\| DI_\eps u\|_{L^2(\Omega)}^2.
\end{equation}
To see this, consider a set $x+\eps T_*\subseteq\Omega$, $x\in\eps \calL$. By Poincar\'e's inequality,
\begin{equation*}
    \int_{x+\eps T_*} |I_\eps u-J_\eps u|^2dy 
    = 
    \int_{x+\eps T_*} |I_\eps u(y)- I_\eps u(x)|^2dy 
    \le 
    c \eps^2 \int_{x+\eps T_*} |DI_\eps|^2dy,
\end{equation*}
with a constant that depends only on $T_*$. Summing over all simplices proves the assertion.
\end{remark}

\begin{lemma}\label{lemmacompactclusterkornbetaballdelta}
Let $\delta\ge d_{\calC}\eps$, $x_0\in\eps\calL$, $u:\eps\calL\cap  B_{2\delta}(x_0)\to\R^n$. Then, there is a matrix $S\in\R^{n\times n}_\skw$ such that
\begin{equation}\label{lemmacompactdelta1}
    \int_{B_{\frac32\delta}(x_0)} |DI_\eps u-S|^2 dx 
    \le 
    c E_\eps[d_\eps u, B_{2\delta}(x_0)]
\end{equation}
and there is $b\in\R^n$ such that the function $\hat u(y):=u(y)-Sy-b$ obeys
\begin{equation}\label{lemmacompactdelta2}
    \int_{B_{\delta}(x_0)} |J_\eps \hat u|^2 dx 
    \le 
    c \delta^2 E_\eps[d_\eps u, B_{2\delta}(x_0)]
\end{equation}
and
\begin{equation}\label{lemmacompactdelta3}
    \int_{B_{\delta}(x_0)} |I_\eps\hat u - J_\eps \hat u|^2 dx 
    \le 
    c \eps^2 E_\eps[d_\eps u, B_{2\delta}(x_0)].
\end{equation}
\end{lemma}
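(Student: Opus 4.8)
The statement is a quantitative Korn-type inequality with a Poincaré-type correction, localized to a ball and expressed through the discrete energy. The plan is to prove the three estimates in sequence, using the earlier coercivity machinery (Remarks~\ref{lemmacompactclusterkorn} and~\ref{lemmadiffiepsjepsu}) as black boxes together with a continuum Korn inequality on $B_{3\delta/2}(x_0)$.

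\emph{Step 1 (proof of \eqref{lemmacompactdelta1}).}
First I would observe that Remark~\ref{lemmacompactclusterkorn}, applied cluster-by-cluster and rescaled, gives a bound on the symmetrized gradient of $I_\eps u$: summing the single-cluster estimate $\int_{T_*}|DIu+(DIu)^T|^2\,dx\le c\,\EC^0[du]$ over all clusters $z+\eps\calC\subseteq B_{2\delta}(x_0)$ and invoking the coercivity \eqref{eqasscoerc} $\alpha\EC^0\le\EC$ together with the definition \eqref{eqdeftotalenergy} of the total energy yields
\begin{equation*}
    \int_{B_{\frac74\delta}(x_0)} |DI_\eps u+(DI_\eps u)^T|^2\,dx
    \le
    c\,E_\eps[d_\eps u, B_{2\delta}(x_0)].
\end{equation*}
The containment $\delta\ge d_\calC\eps$ is exactly what guarantees that each relevant cluster fits inside the larger ball, so no atom is left uncontrolled. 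Then I would apply the classical (continuum) Korn inequality on the smooth bounded domain $B_{3\delta/2}(x_0)$ to the $W^{1,2}$ function $I_\eps u$: there is a skew matrix $S$ (the antisymmetric part of a suitable average of $DI_\eps u$) such that $\|DI_\eps u-S\|_{L^2(B_{3\delta/2})}^2\le c\,\|DI_\eps u+(DI_\eps u)^T\|_{L^2(B_{3\delta/2})}^2$, with a Korn constant that is scale-invariant for balls. Combining the two displays gives \eqref{lemmacompactdelta1}.

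\emph{Step 2 (proof of \eqref{lemmacompactdelta2}).}
Set $\hat u(y):=u(y)-Sy-b$, so that $DI_\eps\hat u=DI_\eps u-S$ and \eqref{lemmacompactdelta1} reads $\|DI_\eps\hat u\|_{L^2(B_{3\delta/2})}^2\le c\,E_\eps[d_\eps u,B_{2\delta}(x_0)]$. Choosing $b$ to make $I_\eps\hat u$ have zero average over $B_\delta(x_0)$, the continuum Poincaré inequality on the ball gives $\|I_\eps\hat u\|_{L^2(B_\delta)}^2\le c\,\delta^2\|DI_\eps\hat u\|_{L^2(B_\delta)}^2\le c\,\delta^2 E_\eps[d_\eps u,B_{2\delta}(x_0)]$. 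To pass from $I_\eps\hat u$ to $J_\eps\hat u$ I would invoke Remark~\ref{lemmadiffiepsjepsu}, which gives $\|I_\eps\hat u-J_\eps\hat u\|_{L^2}^2\le c\,\eps^2\|DI_\eps\hat u\|_{L^2}^2\le c\,\delta^2 E_\eps[d_\eps u,B_{2\delta}(x_0)]$ (using $\eps\le\delta$); a triangle inequality then yields \eqref{lemmacompactdelta2}. Note that Remark~\ref{lemmadiffiepsjepsu} already \emph{is} the statement \eqref{lemmacompactdelta3}, applied to $\hat u$, since $DI_\eps\hat u=DI_\eps u-S$ and the right-hand side of \eqref{lemmacompactdelta1} controls $\|DI_\eps\hat u\|_{L^2}^2$; so \eqref{lemmacompactdelta3} comes essentially for free.

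\emph{Main obstacle.}
The delicate point is bookkeeping the various nested balls and the $\eps$-boundary layers so that the Korn and Poincaré inequalities are applied on a ball strictly interior to the region where $DI_\eps u$ is actually controlled. The single-cluster estimate of Remark~\ref{lemmacompactclusterkorn} controls $DI_\eps u$ only on simplices belonging to clusters fully inside $B_{2\delta}(x_0)$, so there is a loss of a layer of width $\sim d_\calC\eps\le\delta$; I must choose the radii ($2\delta$, $7\delta/4$, $3\delta/2$, $\delta$) so that every application lands comfortably inside the previous region, and verify that all Korn and Poincaré constants are genuinely scale-invariant under the dilation $y\mapsto x_0+\delta(y-x_0)/\delta$ so that no hidden $\delta$-dependence creeps in. This is routine but is the step where an incorrectly nested radius would break the argument.
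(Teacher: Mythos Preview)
Your proposal is correct and follows essentially the same approach as the paper: bound the symmetrized gradient of $I_\eps u$ via Remark~\ref{lemmacompactclusterkorn} summed over clusters, apply a continuum Korn/Poincar\'e estimate on an intermediate ball, and then use Remark~\ref{lemmadiffiepsjepsu} together with a triangle inequality to pass between $I_\eps$ and $J_\eps$. The only cosmetic difference is that the paper applies a single Korn--Poincar\'e inequality on $B_{3\delta/2}(x_0)$ to obtain $S$ and $b$ simultaneously (getting control of both $I_\eps\hat u$ and $DI_\eps\hat u$ on $B_{3\delta/2}$), whereas you separate Korn and Poincar\'e into two steps; this changes nothing of substance.
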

\begin{proof}
For notational simplicity, we work in the case $x_0=0$. We first observe that $\frac12\delta\ge \eps \diam \calC$ implies
\begin{equation}
    B_{\frac32\delta}
    \subseteq 
    \bigcup_{x\in \clust^{B_{2\delta}}_\eps} x+\eps T_*.
\end{equation}
Using Remark \ref{lemmacompactclusterkorn} on each $x+\eps T_*$ with $x\in \clust^{B_{2\delta}}_\eps$ and summing over $x$,
\begin{equation}\label{eqDIepsDIepsT}
\begin{split}
    \int_{B_{\frac32\delta}} |DI_\eps u+(DI_\eps u)^T|^2 dx 
    & \le 
    c \sum_{x\in \clust_\eps^{B_{2\delta}}} \eps^n \EC^0[\tau^x_\eps du]
    \le 
    c M ,
\end{split}
\end{equation}
where in the last step we use \eqref{eqasscoerc} and define $M:=E_\eps[d_\eps u,B_{2\delta}]$. By the Korn-Poincar\'e inequality, there is an affine infinitesimal isometry $y\mapsto Sy+b$ such that the function $\hat u(y) : = u(y) - Sy - b$ obeys
\begin{equation}\label{eqcompactkornpo}
    \delta^{-2}\int_{B_{\frac32\delta}} |I_\eps \hat u|^2 dx 
    +
    \int_{B_{\frac32\delta}} |DI_\eps \hat u|^2 dx 
    \le 
    c M,
\end{equation}
which in particular proves \eqref{lemmacompactdelta1}. By Remark \ref{lemmadiffiepsjepsu},
\begin{equation}
    \int_{B_{\delta}} |J_\eps \hat u-I_\eps\hat u|^2 dx 
    \le
    c \eps^2 \int_{B_{\frac32\delta}} |DI_\eps \hat u|^2 dx
\end{equation}
and, with \eqref{eqcompactkornpo}, this proves \eqref{lemmacompactdelta3}. Finally,
\begin{equation}
\begin{split}
    \int_{B_{\delta}} |J_\eps \hat u|^2 dx 
    & \le
    2 \int_{B_{\delta}} |J_\eps \hat u-I_\eps \hat u|^2 dx 
    +
    2\int_{B_{\delta}} |I_\eps \hat u|^2 dx 
    \\ & \le 
    c \eps^2
    \int_{B_{\frac32\delta}} |DI_\eps \hat u|^2 dx 
    +
    2\int_{B_{\delta}} |I_\eps \hat u|^2 dx
    \le 
    c\delta^2 M ,
\end{split}
\end{equation}
which proves \eqref{lemmacompactdelta2}.
\end{proof}

It will be useful to consider a mollification of the interpolations. Fix a mollifier $\psi_\delta\in C_c^\infty(B_{\delta/2};[0,\infty))$ with 
\begin{equation}\label{eqpsideltamoll}
    \psi_\delta(y)
    =
    \psi_\delta(-y), \hskip5mm \int_{\R^n}\psi_\delta\,dx
    =
    1,\hskip5mm \psi_\delta(y)=\delta^{-n}\psi_1(y/\delta). 
\end{equation}
For a generic lattice function $v:\eps\calL\to \R^M$ we consider the mollification of the piecewise constant interpolation $v_\delta:=\psi_\delta \ast J_\eps v$, and observe that for $x\in\eps\calL$ we have
\begin{equation}
    v_\delta(x)=\sum_{y\in \eps \calL} v(x-y) \psi_\delta^\eps(y) ,
\end{equation}
where $\psi_\delta^\eps:\eps\calL\to\R$ is defined as
\begin{equation}\label{eqdefpsideltaeps}
    \psi_\delta^\eps(y):=\int_{\eps T_*}\psi_\delta(y-z)dz.
\end{equation}
This implies $\sum_{x\in \eps \calL} \psi^\eps_\delta(x)=1$, $\psi^\eps_\delta \ge 0$, and $\psi^\eps_\delta=0$ outside $B_\delta$ whenever $d_{T_*}\eps\le\delta$.

\begin{remark}\label{remarkLbetaLbetadelta}
From \eqref{lemmacompactdelta1} in the proof of Lemma \ref{lemmacompactclusterkornbetaballdelta} we obtain, for $\delta\ge d_\calC\eps$,
\begin{equation}\label{lemmacompactdeltamoll}
    \int_{B_{\delta}(x_0)} |DI_\eps u - \psi_\delta\ast DI_\eps u|^2 dx 
    \le 
    c E_\eps[d_\eps u, B_{2\delta}(x_0)].
\end{equation}
Equivalently, if  $\discbeta$ is exact on $B_{2\delta}(x_0)$, then
\begin{equation}\label{lemmacompactdeltamollbeta}
    \int_{B_{\delta}(x_0)} |L\discbeta -\psi_\delta\ast L\discbeta |^2 dx 
    \le 
    c E_\eps[\discbeta, B_{2\delta}(x_0)].
\end{equation}
\end{remark}

\subsection{Elastic energy of the mollification}
\label{secelastmollif}
Next, we study the asymptotics of the discrete energy for deformation fields which are locally exact. The estimate proved in Theorem~\ref{theomollifydisccont} will be important in order to estimate the elastic energy away from the core region.

\begin{theorem}\label{theomollifydisccont}
Let $\omega\subseteq\R^n$ be an open set, $\omega'\subset\subset\omega$, and assume $\dist(\omega',\partial\omega)\ge 5\delta$, with $\delta\ge 3k_*\eps$. For $\xi:\bonds^\omega_\eps\to\R^n$ let $F_\delta:\omega'\to\R^{n\times n}$ be defined by
\begin{equation}\label{eqdefFdelta}
    F_\delta:= \psi_\delta \ast L\discbeta.
\end{equation}
If $\Ce(\xi,\omega)=\emptyset$, then
\begin{equation}
    \int_{\omega'} \frac12 \C F_\delta \cdot F_\delta \, dx 
    \le 
    (1+c\frac{\eps}{\delta}) E_\eps[\discbeta, \omega].
\end{equation}
The constant $c$ depends on the lattice, the cluster, and the cluster energy, but not on $\eps$, $\delta$, $\omega$, $\omega'$ and $\discbeta$.
\end{theorem}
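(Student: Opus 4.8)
The goal is to bound the continuum energy $\int_{\omega'}\frac12\C F_\delta\cdot F_\delta\,dx$ by $(1+c\eps/\delta)E_\eps[\discbeta,\omega]$ when $\discbeta$ is exact on all of $\omega$. Since $\Ce(\xi,\omega)=\emptyset$, Lemma~\ref{lemmaelemenmtarycontinuouscirc}\ref{lemmaoutofcoreLb} tells us that $\discbeta$ is locally exact throughout $\omega$ (away from the boundary layer), so by Lemma~\ref{lemmadiscretepath} and Remark~\ref{remark-Lcsiexact} the interpolation $L\discbeta$ is, inside each simplex, the gradient of the piecewise affine interpolant $I_\eps u$ of the underlying displacement $u$; in particular $L\discbeta = DI_\eps u$ as an $L^\infty_\loc$ field on $\omega$. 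The core object to relate is the mollified field $F_\delta = \psi_\delta\ast L\discbeta = \psi_\delta\ast DI_\eps u = D(\psi_\delta\ast I_\eps u)$. The strategy I would follow is to rewrite $F_\delta(x)$ at a point $x$ as a convex combination of discrete deformation gradients evaluated at nearby clusters, then use convexity of the cluster energy together with the definition \eqref{eqdefC} of $\C$ to transfer the inequality.

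\textbf{Main computation via convexity.} The plan is to fix $x\in\omega'$ and express $F_\delta(x)$ as a weighted average, over lattice points $z$ in a ball of radius $\sim\delta$ around $x$, of the local discrete gradients $L\discbeta|_{\text{simplices near }z}$, with weights coming from $\psi_\delta^\eps$ as in \eqref{eqdefpsideltaeps}. The key identity \eqref{eqdefC}, namely $\frac12\C A\cdot A = \frac1{\calL^n(T_*)}\EC[\discbeta^A]$, lets me compare $\frac12\C F_\delta(x)\cdot F_\delta(x)$ against a weighted sum of cluster energies $\EC[\tau^z_\eps\discbeta]$. Because $A\mapsto\frac12\C A\cdot A$ is a nonnegative (convex) quadratic form and $F_\delta(x)$ is an average, Jensen's inequality gives a bound of $\frac12\C F_\delta(x)\cdot F_\delta(x)$ by the corresponding average of $\frac1{\calL^n(T_*)}\EC$-type terms. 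The subtlety is that $\C$ only sees the deformation gradient $A$, whereas $\EC[\tau^z_\eps\discbeta]$ sees the full discrete strain on the cluster; on the exact region, however, $\tau^z_\eps\discbeta$ coincides with the discrete gradient $\discbeta^{A}$ of the affine interpolant only up to the error measuring how far $\discbeta$ is from being affine across the cluster, which is precisely what Remark~\ref{remarkLbetaLbetadelta}, inequality \eqref{lemmacompactdeltamollbeta}, controls. I would therefore use mollification-averaging to replace pointwise gradients by their $\psi_\delta$-averages, integrate over $\omega'$, and swap the order of the $x$-integral and the $z$-sum so that each cluster energy $\EC[\tau^z_\eps\discbeta]$ is counted with total weight $\eps^n$, reassembling $\sum_z\eps^n\EC[\tau^z_\eps\discbeta] = E_\eps[\discbeta,\omega]$ up to boundary terms.

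\textbf{Tracking the $(1+c\eps/\delta)$ factor.} The factor $1+c\eps/\delta$ rather than a clean $1$ should arise from two sources: first, the mismatch between the continuum quadratic form $\C$ (defined through the reference cell $T_*$) and the exact cluster energy, and second, the discrepancy between summing $\eps^n\EC[\tau^z_\eps\discbeta]$ over clusters whose $\psi_\delta^\eps$-weights nearly-but-not-exactly reproduce a partition of unity near $\partial\omega'$. I would make the averaging exact in the interior and absorb the overcounting or boundary-layer clusters (those within distance $\sim\delta$ of $\partial\omega'$, a set of relative measure $O(\eps/\delta)$ once one accounts for the mollifier support of size $\delta$ against the cluster scale $\eps$) into the $c\eps/\delta$ error, using that by coercivity \eqref{eqclusterfromabove} each cluster energy is comparable to a genuine term of $E_\eps[\discbeta,\omega]$ and therefore nonnegative. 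The hypotheses $\delta\ge 3k_*\eps$ and $\dist(\omega',\partial\omega)\ge5\delta$ guarantee that all clusters entering the averages at points of $\omega'$ lie safely inside $\omega$, so $\Ce(\xi,\omega)=\emptyset$ really does imply exactness on every relevant cluster via Lemma~\ref{lemmaelemenmtarycontinuouscirc}\ref{lemmaoutofcoreLb}.

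\textbf{The main obstacle.} I expect the hard part to be the clean bookkeeping that turns the pointwise Jensen estimate into the global inequality with the \emph{sharp} constant $1$ in front of $E_\eps$ (not merely a constant $c$). This requires that the weights with which each cluster energy $\EC[\tau^z_\eps\discbeta]$ enters, after integrating the mollified convex combination over $\omega'$, sum to exactly $\eps^n$ in the interior; any slack must be shown to be controlled by $\eps/\delta$ rather than contributing at leading order. Achieving this likely hinges on the normalization $\sum_{x\in\eps\calL}\psi_\delta^\eps(x)=1$ together with the identity $F_\delta = D(\psi_\delta\ast I_\eps u)$, and on carefully comparing, on each cluster, the true strain $\tau^z_\eps\discbeta$ with the affine one $\discbeta^{L\discbeta|_T}$ so that the definition \eqref{eqdefC} of $\C$ can be invoked without incurring a constant larger than $1$ in the leading term. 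The estimate \eqref{lemmacompactdeltamollbeta} from Remark~\ref{remarkLbetaLbetadelta} is the essential tool that keeps this comparison error at order $\eps/\delta$.
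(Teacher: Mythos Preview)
Your proposal has a genuine gap in the mechanism for obtaining the sharp leading constant~$1$.

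You want to apply Jensen's inequality to the continuum quadratic form $A\mapsto\frac12\C A\cdot A$, writing $F_\delta(x)$ as an average of the piecewise constant values $L\discbeta|_T$ and then invoking \eqref{eqdefC}. This indeed gives
\[
\frac12\C F_\delta(x)\cdot F_\delta(x)\ \le\ \int\psi_\delta(x-y)\,\frac{1}{\calL^n(T_*)}\,\EC\bigl[\discbeta^{L\discbeta(y)}\bigr]\,dy.
\]
But $\EC[\discbeta^{L\discbeta|_T}]$ is the cluster energy of the \emph{affine} strain $\discbeta^A$ with $A=L\discbeta|_T$, which is not the same as $\EC[\tau^z_\eps\discbeta]$, the energy of the actual (generally non-affine) strain on the cluster. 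There is no inequality $\EC[\discbeta^{L\discbeta|_T}]\le\EC[\tau^z_\eps\discbeta]$ in general, and the estimate \eqref{lemmacompactdeltamollbeta} you invoke only bounds the difference by $cE_\eps[\discbeta]$, not by $(\eps/\delta)E_\eps[\discbeta]$; inserting it via Young's inequality would destroy the sharp constant.

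The paper's proof avoids this by reversing the order of the two operations. It first mollifies at the discrete level, defining $\discbeta_\delta(x,h):=\sum_z\psi_\delta^\eps(z)\discbeta(x-z,h)$, and applies Jensen to the convex functional $\EC$ itself to obtain $E_\eps[\discbeta_\delta,\omega'']\le E_\eps[\discbeta,\omega]$ with constant exactly~$1$. Only then does it compare, cluster by cluster, the continuum energy $\frac12\C F_\delta\cdot F_\delta$ with $\EC[\tau^{x_0}_\eps\discbeta_\delta]$. Because $\discbeta_\delta$ is already smooth on scale $\delta$, this comparison produces errors of order $\eps/\delta$: one from replacing the finite difference $d_\eps\hat u_\delta(x,h)$ by the gradient $D\hat u_\delta(y)h$ (controlled via $\|D^2\psi_\delta\|_{L^\infty}\eps$ against the local Korn--Poincar\'e bound on $J_\eps\hat u$), and one from the discrepancy between $J_\eps$ and $I_\eps$ before mollification. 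Young's inequality with parameter $\eta=\eps/\delta$ then assembles the final bound.

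Your attribution of the $(1+c\eps/\delta)$ factor to boundary-layer overcounting near $\partial\omega'$ is therefore incorrect: the error is a bulk second-order effect coming from the mollifier, not a geometric boundary term.
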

\begin{proof}
Let $\omega'':=\{x\in \omega: B_{\delta}(x)\subseteq\omega\}$. We claim that 
\begin{equation}\label{eqpbetadconvd}
    E_\eps[\discbeta_\delta,\omega''] \le E_\eps[\discbeta, \omega],
\end{equation}
where the mollified lattice strains $\discbeta_\delta : \bonds_\eps^{\omega''} \to \R^n$ are defined by
\begin{equation}
    \discbeta_\delta(x,h)
    :=
    \sum_{z\in \eps\calL} \psi_\delta^\eps(z)\discbeta(x-z,h),
\end{equation}
the sum over $z$ is implicitly restricted to the points where $\psi_\delta^\eps$ does not vanish. This ensures that $\xi(x-z,h)$ is well defined.

To prove~\eqref{eqpbetadconvd}, we recall that, by definition of $E_\eps$ and $\discbeta_\delta$, 
\begin{equation*}
    E_\eps[\discbeta_\delta,\omega'']
    =
    \sum_{x\in \clust_\eps^{\omega''}} \eps^n \EC[\tau^x_\eps\discbeta_\delta]
    =
    \sum_{x\in \clust_\eps^{\omega''}} \eps^n \EC[\sum_{z\in\eps\calL} \psi_\delta^\eps(z)\tau_\eps^{x-z}\discbeta].
\end{equation*}
Since $\EC$ is convex, $\psi_\delta^\eps\ge0$ and $\sum_{z\in \eps \calL} \psi_\delta^\eps(z) = 1$ with Jensen's inequality we obtain
\begin{equation*}
\begin{split}
    E_\eps[\discbeta_\delta,\omega'']
    &
    \le 
    \sum_{x\in \clust_\eps^{\omega''}} \eps^n \sum_{z\in\eps\calL} \psi_\delta^\eps(z) \EC[\tau_\eps^{x-z}\discbeta]
    \\ & = 
    \sum_{z\in\eps\calL\cap B_\delta} \psi_\delta^\eps(z)
    \sum_{x\in \clust_\eps^{\omega''-z}} \eps^n \EC[\tau_\eps^{x}\discbeta]
    \le 
    \sum_{x\in \clust_\eps^{\omega}} \eps^n \EC[\tau_\eps^{x}\discbeta]=E_\eps[\discbeta,\omega] ,
\end{split}
\end{equation*}
which proves \eqref{eqpbetadconvd}.

We fix $x_0\in\eps \calL$ such that $(x_0+\eps T_*)\cap\omega'\ne\emptyset$, which implies $B_{4\delta}(x_0)\subseteq\omega$ and $x_0\in  \clust_\eps^{\omega''}$. Since $\Ce(\xi,\omega)=\emptyset$, by Lemma~\ref{lemmaelemenmtarycontinuouscirc}\ref{lemmaoutofcorecircu} with $r=3\delta$ we obtain that $\discbeta$ is exact on $B_{3\delta}(x_0)$, and by Lemma~\ref{lemmadiscretepath} there is $u:\eps\calL\cap B_{3\delta}(x_0)\to\R^n$ 
such that $\discbeta=d_\eps u$ as functions on $\bonds^{B_{3\delta}(x_0)}_\eps$. By Lemma \ref{lemmacompactclusterkornbetaballdelta}, there is a matrix $S\in\R^{n\times n}_\skw$ and a vector $b\in\R^n$ such that, defining $\hat u(y):=u(y)-Sy-b$, 
\begin{equation}\label{eqjepsuhat}
    \|J_\eps \hat u\|_{L^2(B_{\frac32\delta}(x_0))}^2
    \le 
    c \delta^2 E_\eps[d_\eps u, B_{3\delta}(x_0)].
\end{equation}
We observe that, since $\psi_\delta$ is even, $(\psi_\delta\ast I_\eps \hat u)(y)= (\psi_\delta\ast I_\eps  u)(y) - Sy -b$. We set $\hat \discbeta(x,h) := \discbeta(x,h)  -Sh$ and observe that by \eqref{eqclusterlinearrotations} we have $E_\eps[\hat\discbeta,\cdot] = E_\eps[\discbeta,\cdot]$, $\hat\discbeta_\delta(x,h) := (\psi_\delta\ast J_\eps\hat \discbeta)(x,h) = \discbeta_\delta(x,h)-Sh$, $E_\eps[\hat\discbeta_\delta,\cdot] = E_\eps[\discbeta_\delta,\cdot]$.

We define $\hat u_\delta:B_{2\delta}(x_0)\to\R^n$ by $\hat u_\delta := \psi_\delta\ast J_\eps \hat u$. By the linearity of the mollification, $\hat\discbeta_\delta=d_\eps \hat u_\delta$ on
$\bonds^{B_{2\delta}(x_0)}_\eps$. Indeed, for $x\in B_{2\delta}(x_0)\cap\eps\calL$, $h\in\calN$,
\begin{equation}\label{eqbetadeltaduepsdelta}
\begin{split}
    \hat\discbeta_\delta(x,h)
    & =
    (\psi_\delta\ast J_\eps \hat\discbeta(\cdot, h))(x)
    =
    (\psi_\delta\ast J_\eps d_\eps \hat u(\cdot, h))(x)
    \\ & = 
    (\psi_\delta\ast d_\eps J_\eps \hat  u)(x, h)
    =
    d_\eps (\psi_\delta\ast J_\eps  \hat  u)(x, h)
    =
    d_\eps \hat u_\delta(x,h),
\end{split}
\end{equation}
where we have used \eqref{depsjepscommute}. This relates the mollified strains $\hat\discbeta_\delta$ to finite differences of $u_\delta$.

Next, we relate  these finite differences at the discrete level to the continuous gradient of $u_\delta$. For any $x\in x_0+\eps \calC$, $y\in x_0+\eps T_*$, and $h\in\calN$, since $|x-y|\le \eps \diam \calC=\eps d_\calC/2\le \delta/2$ and $\psi_\delta\in C^\infty_c(B_{\delta/2})$,
\begin{equation}\label{eqduDu}
\begin{split}
    &
    d_\eps \hat u_\delta(x,h)-D \hat  u_\delta (y)h
    \\ & =
    \int_{B_{\delta}} 
        \frac{ \psi_\delta(z+\eps  h)-\psi_\delta(z)}\eps 
        (J_\eps \hat u)(x-z)- D\psi_\delta(z)h (J_\eps \hat u) (y-z)
    dz 
    \\ & =
    \int_{B_{\delta}} 
        \left[
            \frac{ \psi_\delta(z+\eps h)-\psi_\delta(z)}\eps 
            -
            D\psi_\delta(z+y-x)h
        \right]
        (J_\eps \hat  u)(x-z)
    dz .
\end{split}
\end{equation}
The square parenthesis can be estimated by 
\begin{equation}\label{eqd2psidelta}
    \|D^2\psi_\delta\|_{L^\infty} (\eps |h|+|y-x|)
    \le 
    c \frac{\eps}{\delta^{n+2}}.
\end{equation}
Squaring \eqref{eqduDu}, using H\"older, and recalling that $\hat \discbeta_\delta = d_\eps \hat u_\delta$ and \eqref{eqjepsuhat}, we obtain the pointwise estimate
\begin{equation}\label{eqbetadeltaDudelta}
\begin{split}
    |\hat \discbeta_\delta(x,h)-D \hat u_\delta (y)h|^2
    & \le  
    c \frac{\eps^2}{\delta^4\delta^{n}}  
    \| J_\eps \hat u\|_{L^2(B_{\frac32\delta}(x_0))}^2
    \\ & \le  
    c \frac{\eps^2}{\delta^2\delta^{n}}  
    E_\eps[d_\eps u, B_{3´\delta}(x_0)]
    =:
    R_{x_0}.
\end{split}
\end{equation}

The last step is to compare $D\hat u_\delta$ with $\hat F_\delta:=F_\delta-S$. We recall that $F_\delta$, that was defined in \eqref{eqdefFdelta}, satisfies  $F_\delta=\psi_\delta\ast D I_\eps u$ in $B_\delta(x_0)$, which gives $\hat F_\delta=\psi_\delta \ast DI_\eps \hat u$. We observe that
\begin{equation}
    D\hat u_\delta-\hat F_\delta
    =
    D \hat u_\delta-\psi_\delta\ast DI_\eps \hat u
    =
    D (\psi_\delta\ast J_\eps \hat u-\psi_\delta\ast I_\eps \hat u)  
    =
    D \psi_\delta\ast (J_\eps \hat u-I_\eps \hat u),
\end{equation}
so that, in particular for any $y\in B_\delta(x_0)$,
\begin{equation}
    |D \hat u_\delta-\hat F_\delta|(y)
    \le
    \| D \psi_\delta\|_{L^\infty} 
    \|J_\eps \hat u - I_\eps \hat u\|_{L^1(B_\delta(y))}.
\end{equation}
Integrating over $x_0+\eps T_*$, and recalling that $\|D \psi_\delta\|_{L^\infty} \le c \delta^{-n-1}$,
\begin{equation}\label{eqdistanceudeltafdelta}
\begin{split}
    \int_{x_0+\eps T_*} |D \hat u_\delta-\hat F_\delta|^2 dy 
    & \le
    \eps^n \calL^n(T_*) \|D \psi_\delta\|_{L^\infty}^2 
    \calL^n(B_\delta) 
    \|J_\eps \hat u-I_\eps \hat u\|_{L^2(B_{\frac32\delta}(x_0))}^2
    \\ & \le 
    c \frac{\e^{2+n}}{\delta^{2+n}} 
    E_\eps[\discbeta, B_{3\delta}(x_0)],
\end{split}
\end{equation}
where in the last step we used Lemma \ref{lemmacompactclusterkornbetaballdelta}.

At this point, we estimate the continuous energy of $F_\delta$ in the set $x_0+\eps T_*$ in terms of the energy of cluster centered in $x_0$. For any $w\in\eps\calC$, $h\in\calN$, and $z\in \eps T_*$ we obtain from \eqref{eqbetadeltaDudelta} with $x=x_0+w$ and $y=x_0+z$ that
\begin{equation}\label{eqdiffhatxideduhatd}
\begin{split}
    |\hat \discbeta_\delta(x_0+ w,h)-D\hat u_\delta (x_0+  z)h|^2
    \le 
    R_{x_0} .
\end{split}
\end{equation}
Since $\EC$ is a quadratic form, for any $\discbeta',\discbeta''\in D_\calC$ and $\eta\in (0,1]$, we have
\begin{equation}\label{ecyoung}
\begin{split}
    \EC[\discbeta']
    & \le 
    (1+\eta ) \EC[\discbeta'']+(1+\frac1\eta)\EC[\discbeta'-\discbeta'']
    \\ & \le 
    (1+\eta )\EC[\discbeta'']+\frac c\eta\sum_{(x,h)\in\calC_\calN} |(\discbeta'-\discbeta'')(x,h)|^2.
\end{split}
\end{equation}
Therefore, letting $\zeta_z:=D\hat u_\delta(x_0+ z)\in\R^{n\times n}$ and recalling \eqref{eqdefC}, \eqref{ecyoung}, and \eqref{eqdiffhatxideduhatd}, we have 
\begin{equation}
    \calL^n(\eps T_*) \frac12 \zeta_z\cdot \C\zeta_z
    =  
    \eps^n \EC[\discbeta^{\zeta_z}] 
    \le
    (1+\eta)  \eps^n  
    \EC[\tau_\eps^{x_0} \hat \discbeta_\delta]  
    +
    \frac{c\eps^n}{\eta} R_{x_0} .
\end{equation}
Integrating over all $z\in \eps T_*$,
\begin{equation}
    \int_{x_0+\eps T_*}  
        \frac12 D\hat u_\delta \cdot \C D\hat u_\delta 
    dy
    \le 
    (1+\eta) \eps^n \EC[\tau^{x_0}_\eps \discbeta_\delta]  
    +
    \frac{c\eps^n }{\eta} R_{x_0}.
\end{equation}
Recalling \eqref{eqdistanceudeltafdelta},
and in the last step the definition of $R_{x_0}$ in~\eqref{eqbetadeltaDudelta},
\begin{equation}
\begin{split}
    \int_{x_0+\eps T_*}  \frac12F_\delta \cdot \C  F_\delta dx
    & =
    \int_{x_0+\eps T_*} \frac12\hat F_\delta \cdot \C \hat F_\delta dx
    \\ & \le
    (1+\eta)
    \int_{x_0+\eps T_*} 
        \frac12 D \hat u_\delta \cdot \C D \hat u_\delta 
    dx
    + 
    \frac{c}{\eta} 
    \int_{x_0+\eps T_*} 
        |D \hat u_\delta -\hat F_\delta|^2 
    dx
    \\ & \le 
    (1+\eta)^2 \eps^n 
    \EC[\tau^{x_0}_\eps \discbeta_\delta]  
    +
    \frac{c\eps^n}{\eta} R_{x_0}
    + 
    \frac{c}{\eta}\frac{\e^{2+n}}{\delta^{2+n}} 
    E_\eps[\discbeta, B_{3\delta}(x_0)]
    \\ & \le 
    (1+\eta)^2 \eps^n \EC[\tau^{x_0}_\eps \discbeta_\delta]  
    + 
    \frac{c}{\eta}\frac{\e^{2+n}}{\delta^{2+n}} 
    E_\eps[\discbeta, B_{3\delta}(x_0)].
\end{split}
\end{equation}
We stress that the last estimate does not contain $u$. Summing over all $x_0$ such that $x_0+\eps T_*\cap\omega'\ne\emptyset$, and using that $\displaystyle\sum_{x_0\in\eps \calL} \chi_{B_{3\delta}(x_0)}\le c \delta^n/\eps^n$,
\begin{equation}
    \int_{\omega'} 
        \frac12F_\delta \cdot \C F_\delta 
    dx
    \le 
    (1+\eta)^2 E_\eps[\discbeta_\delta, \omega''] 
    +
    c \frac{\eps^2 }{\eta\delta^2} 
    E_\eps[\discbeta, \omega].
\end{equation}
With \eqref{eqpbetadconvd} and choosing $\eta=\eps/\delta$, the proof is concluded.
\end{proof}

\section{The continuum incompatible fields}\label{sectionextensionrigidity}
The scope of this section is to study the continuum model obtained from the discrete one in the region away from the dislocation cores and, in particular, to obtain a compactness that can then be translated to the discrete setting. These are the so-called semidiscrete core-region models, in which an elastic energy is integrated on a set of the form $\Omega\setminus B_{\rho_\eps}(\gamma)$, with $\rho_\eps\sim \eps$. We remark that \cite{ContiGarroniOrtiz2015} and \cite{ContiGarroniMarziani} did not prove compactness of the strains for core-region models of this type. The results of this section can also be seen as completion of what was done {in} that work.

The key aim of this Section is to prove Theorem~\ref{theo-compact-cont}, in which compactness of both the dislocation distributions and the rescaled strains is obtained for sequences of bounded energy. One key ingredient is the rigidity for almost compatible fields obtained in \cite{ContiGarroniRigidity} and recalled in Proposition~\ref{proprigiditycras} below. However, this type of rigidity requires fields that are defined over the entire set $\Omega$ (or on  generic Lipschitz subsets $\Omega'\subset\subset\Omega$). Therefore, here we develop a complex construction to extend a continuum strain field from $\Omega\setminus B_{\rho_\eps}(\gamma)$ to  $\Omega'$, concentrating the curl on $\gamma$ and without modifying it outside $B_{\rho_\eps'}(\gamma)$. The key construction is presented in Proposition~\ref{propextendcylincircles}. It uses specific constructions for hollow balls and hollow cylinders, which are obtained by relatively standard reflection procedures and presented in the Appendix, and a characterization of distributions which are obtained as curl of $L^1$ fields and concentrated on lines, discussed in Lemma~\ref{lemmacurll1h1}. As the strain is defined only away from the core region, its relation to the dislocation measure needs to be nonlocal. Here, {this} relation is effected using the compatibility concept presented in Definition~\ref{defbetamucompatible}. Finally, in Section~\ref{seccellproblem} we recall some properties of the classical dislocation line-tension energy. As the kinematics of dislocations is strongly dimension dependent, throughout the entire section we focus on the case $n=3$.

\subsection{Comparison with continuous dislocations}

The condition for the admissibility of $\discbeta$ can be formulated in terms of an associated discrete dislocation density, in analogy with what is done in the continuum case. If $\discbeta\in \Ade(\Omega)$, then we can define the continuum associated field $L\discbeta$ by Definition \ref{defLbeta}. In order to understand the behavior of the field $L\discbeta$, it is useful to consider three different regions. In the interior of $\Omega$, and outside the core, $L\xi$ is the unique interpolation of the values of $\discbeta$ on the bonds, and it is curl-free. In the core region, $L\xi$ is a best-fit approximation, but it does not necessarily agree with the value of $\xi$ on the bonds, and it is not necessarily curl-free. In a small region (of size approximately $\eps$) around the boundary of $\Omega$ it is arbitrarily set to $0$, hence at the interface between this boundary region and the interior it is not expected to be curl-free.

In order to make this precise, consider an open set $\Omega'\subset\subset\Omega$ such that $\dist(\Omega',\partial\Omega)\ge 2\ccorerad\eps$. By admissibility of $\discbeta$, $\Ce(\discbeta)\subseteq B_{m\eps}(\gamma)$ for some dilute polygonal $\gamma\in P(\Omega, k_\eps,\alpha_\eps)$. By Lemma~\ref{lemmaelemenmtarycontinuouscirc}\ref{lemmaelemenmtarycontinuouscirccurl}, $\Curl L\xi=0$ in $\Omega'\setminus \overline B_{m\eps}(\gamma)$. For each $\discbeta$ we shall define a measure $\hat\mu_\eps := \curl L\xi \in \calM(\Omega';\R^{3\times 3})$ concentrated on $B_{m\eps}(\gamma)\cap\Omega'$. 
We denote by $\calM(\Omega;\R^k)$ the set of $\R^k$-valued  Radon measures on $\Omega$, by $\calM^1(\Omega)$ the set of divergence-free measures of the form
$\theta\otimes t\calH^1\LL \gamma$,
{where $\gamma$ is a 1-rectifiable curve in $\Omega$, $t\in L^\infty(\gamma;S^2)$ is a tangent vector,
$\theta\in L^1(\gamma;\R^k)$}; and by $\calM^1_{\calL'}(\Omega)$, $\calL'$ a lattice, the subset of $\calM^1(\Omega)$ for which $\theta\in\calL'$.
It is then natural to introduce the set of dislocation measures concentrated on dilute polyhedrals, following the approach of \cite{ContiGarroniOrtiz2015}, and compare $\hat\mu_\eps$ with the latter.

\begin{definition}\label{defM1Lcont}
Given  $\alpha$, $k>0$, a lattice $\calL'$ and an open set $\omega\subseteq\R^3$, a dislocation measure $\mu\in \calM_{\calL'}^1(\omega)$ is said to be $(k,\alpha)$-{\em dilute} if there are finitely many closed segments $\gamma_j\subseteq\Omega$ and vectors $\theta_j\in \mathcal{L'}$, $t_j\in S^2$ (with $t_j$ tangent to $\gamma_j$) such that
\begin{equation}\label{eqmucontsegments}
	\mu = \sum_{j} \theta_j \otimes t_j \calH^1\LL \gamma_j
\end{equation}
and $\gamma=\cup_i\gamma_i$ is in $P(\omega,k,\alpha)$, in the sense that the segments $(\gamma_i)$ satisfy the properties stated in Definition~\ref{defdiluteness-curve}. We let $\calM_{\calL'}^1(\omega,k,\alpha)$ be the space of all measures in $\calM_{\calL'}^1(\omega)$ which are $(k,\alpha)$-dilute. 
\end{definition}

Given a measure in  $\mu\in\calM_{\calL'}^1(\Omega)$, with $\partial\Omega$ connected, we denote by $\contbeta_\mu\in L^{3/2}(\R^3;\R^{3\times 3})$ the solution of
\begin{equation}\label{eqdefbetamu}
    \Curl\eta=E\mu\qquad \Div \C\eta=0
\end{equation}
where $E:\calM_{\calL'}^1(\Omega)\to\calM_{\calL'}^1(\R^3)$ denotes an extension operator (see \cite{BourgainBrezis2004, ContiGarroniMassaccesi2015,ContiGarroniOrtiz2015}).
For notational simplicity we drop the dependence of $\beta_\mu$ on the extension operator.

We recall the definition of $\rho$-compatible pairs of dislocation measures and strains introduced in \cite{ContiGarroniMarziani}.
\begin{definition}\label{defbetamucompatible}
    Let $\Omega\subseteq\R^3$ be a bounded Lipschitz set with connected boundary, $\rho>0$. A pair $(\mu,\beta)\in  \calM^1(\Omega)\times L^1(\Omega\setminus B_\rho(\supp\mu);\R^{3\times 3})$ is $\rho$-compatible in $\Omega$ if
    there are an extension $E\mu\in\mathcal M^1(\R^3)$ of $\mu$ and a field $\beta_0\in L^1(\Omega;\R^{3\times 3})$ such that $\beta=\beta_{\mu}+\beta_0$ in $\Omega\setminus B_\rho(\supp E\mu)$ and $\Curl \beta_0=0$ in $\Omega$.
\end{definition}

The extension result given in Proposition \ref{propextendcylincircles} for fields with $\Curl$ concentrated on a curve $\gamma$ that is $(k,\alpha)$-dilute, gives a canonical way to associate a dilute dislocation measure $\mu$ to every $(\gamma,\discbeta)\in \Ade(\Omega,k,\alpha,m)$. Precisely, the following holds.

\begin{proposition}\label{propmufrombeta}
Fix $\Omega'\subset\subset\Omega$, and let $k_\eps$, $\alpha_\eps$ satisfy \eqref{eqdefheps}. Then, for $\eps$ small enough for every $(\gamma,\discbeta)\in \Ade(\Omega,k_\eps,\alpha_\eps, m)$ there exists a dislocation measure $\mu\in \calM_{\eps\calB}^1(\Omega')$ such that $(\mu,L\xi)$ is $m\eps$-compatible in $\Omega'$. Further,  $\supp\mu \subseteq\gamma$.
\end{proposition}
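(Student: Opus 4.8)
The plan is to extract a dislocation measure $\mu$ from the curl of $L\discbeta$ concentrated near $\gamma$, and then verify $m\eps$-compatibility against $\beta_\mu$ by applying the extension result of Proposition~\ref{propextendcylincircles}. First I would restrict attention to $\Omega'$, which by the diluteness of $\gamma$ and the inclusion $\Ce(\discbeta)\subseteq B_{m\eps}(\gamma)$ carries the full curl of $L\discbeta$ on the set $B_{m\eps}(\gamma)\cap\Omega'$; outside this set Lemma~\ref{lemmaelemenmtarycontinuouscirc}\ref{lemmaelemenmtarycontinuouscirccurl} gives $\Curl L\discbeta=0$. I would set $\hat\mu_\eps:=\Curl L\discbeta\in\calM(\Omega';\R^{3\times3})$, which is the natural object to compare with a genuine one-dimensional dislocation measure.

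The core step is to replace $\hat\mu_\eps$, which is spread over the tubular neighbourhood $B_{m\eps}(\gamma)$ and supported on the faces of the tetrahedra, by a measure $\mu\in\calM^1_{\eps\calB}(\Omega')$ concentrated on $\gamma$ itself and carrying the correct Burgers-vector multiplicities. To fix the multiplicity on a given segment $s_j$ of $\gamma$, I would fix a small loop $\partial D$ encircling $s_j$ at distance of order $k_\eps$ — safely outside $B_{m\eps}(\gamma)$ since $m\eps\le k$ and the segments are $\alpha k$-separated — and compute the circulation of $L\discbeta$ around it. By Lemma~\ref{lemmaelemenmtarycontinuouscirc}\ref{lemmaelementarycirc}, this circulation lies in $\eps\calB$; this is exactly the quantized Burgers vector $\eps\theta_j$ to be attached to $s_j$. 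I would then define $\mu:=\sum_j\theta_j\otimes t_j\,\calH^1\LL s_j$ (rescaled by $\eps$ appropriately so that $\mu\in\calM^1_{\eps\calB}$), with $t_j$ the tangent to $s_j$. Divergence-freeness of $\mu$ follows from the fact that the Burgers vectors are conserved at the nodes of $\gamma$, which in turn reflects that $\discbeta$ has no core points away from $B_{m\eps}(\gamma)$ so the net circulation around any node vanishes; by construction $\supp\mu\subseteq\gamma$.

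It then remains to verify $m\eps$-compatibility of $(\mu,L\discbeta)$ in $\Omega'$ in the sense of Definition~\ref{defbetamucompatible}. This is where Proposition~\ref{propextendcylincircles} enters: since $L\discbeta$ has $\Curl$ concentrated on the $(k_\eps,\alpha_\eps)$-dilute curve $\gamma$, that extension result produces a field on all of $\Omega'$ whose curl is exactly the line-concentrated measure $\mu$, agreeing with $L\discbeta$ outside $B_{m\eps}(\gamma)$. Writing $\beta_\mu$ for the solution of \eqref{eqdefbetamu} associated with $E\mu$, the difference $\beta_0:=L\discbeta-\beta_\mu$ is then curl-free in $\Omega'$ (both fields have curl equal to $E\mu$ off $B_{m\eps}(\gamma)$, and the extension reconciles them across the core tube), which is precisely the required decomposition $L\discbeta=\beta_\mu+\beta_0$ with $\Curl\beta_0=0$.

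The main obstacle I anticipate is the matching across the core tube $B_{m\eps}(\gamma)$: outside it $L\discbeta$ is curl-free and genuinely defined, but inside it is only a best-fit interpolation with uncontrolled curl, so one cannot simply equate $\Curl L\discbeta$ with $\mu$ pointwise. The quantization Lemma~\ref{lemmaelemenmtarycontinuouscirc}\ref{lemmaelementarycirc} guarantees the \emph{integral} circulations are correct and lie in $\eps\calB$, but translating this into a clean line measure $\mu$ plus a globally curl-free remainder requires precisely the hollow-cylinder/hollow-ball extension machinery of Proposition~\ref{propextendcylincircles}, together with the diluteness hypothesis \eqref{eqdefheps} ensuring the tubes around distinct segments do not overlap. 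Handling the nodes of $\gamma$, where several segments meet at angle at least $\alpha$, and checking that the conservation of Burgers vectors there yields $\Div\mu=0$, is the most delicate bookkeeping point.
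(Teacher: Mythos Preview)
Your approach is essentially correct and uses the same key ingredient as the paper (Proposition~\ref{propextendcylincircles}), but it is more roundabout. The paper proceeds in the opposite order: it applies Proposition~\ref{propextendcylincircles} \emph{first}, with $\beta=L\discbeta$, $q=3/2$, $\rho=m\eps$, obtaining an extension $\hat\beta$ with $\curl\hat\beta=0$ on $\Omega'\setminus\gamma$, and then simply \emph{defines} $\mu:=\curl\hat\beta\LL\Omega'$. The proposition already guarantees that $\hat\mu\in\calM^1(\Omega')$ with $\supp\hat\mu\subseteq\gamma$, so there is no need to build $\mu$ by hand via circulations around individual segments or to check $\Div\mu=0$ at the nodes separately. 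Compatibility is then immediate: $\beta_0:=\hat\beta-\beta_\mu$ is curl-free on all of $\Omega'$ because both terms have curl equal to $\mu$ there.

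This last point is where your write-up has a genuine slip. You set $\beta_0:=L\discbeta-\beta_\mu$ and claim it is curl-free in $\Omega'$, appealing vaguely to ``the extension reconciles them across the core tube''. But $L\discbeta$ has uncontrolled curl inside $B_{m\eps}(\gamma)$, so $L\discbeta-\beta_\mu$ is \emph{not} curl-free there. The curl-free $\beta_0$ required by Definition~\ref{defbetamucompatible} must be the one built from the extended field $\hat\beta$, not from $L\discbeta$ itself; the equality $L\discbeta=\beta_\mu+\beta_0$ is only asserted outside the core tube, which is exactly what the definition requires. Once you use $\hat\beta$ here, your circulation-based construction of $\mu$ becomes redundant (and the uniqueness clause of Proposition~\ref{propextendcylincircles} confirms your $\mu$ would coincide with $\curl\hat\beta$ anyway).

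One thing you do that the paper's short proof glosses over: you explicitly invoke Lemma~\ref{lemmaelemenmtarycontinuouscirc}\ref{lemmaelementarycirc} to see that the multiplicities lie in $\eps\calB$, i.e.\ that $\mu\in\calM^1_{\eps\calB}(\Omega')$ and not merely $\calM^1(\Omega')$. That step is indeed needed and is correct as you state it.
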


\begin{proof}
We apply Proposition \ref{propextendcylincircles} below to $\beta= L\xi$, $q=3/2$, and $\rho=m\eps$. We obtain an extension $\hat\beta$ of $\beta$ such that $\curl\hat\beta=0$ on $\Omega'\setminus\gamma$. The measure $\mu$ is given by the restriction of $\curl\hat\beta$ to $\Omega'$ and is supported on a subset of $\gamma$. With $\beta_\mu$ defined as above, we see that $\curl(\hat\beta - \beta_\mu) = 0$ on $\Omega'$. 
\end{proof}
The condition that 	$(\mu,L\xi)$ are $m\eps$-compatible in $\Omega'$ provides a natural way to compute the circulation of $L\xi$ away from the support of $\mu$. In particular, Proposition \ref{propmufrombeta} implies that, for all closed loops $\ell\subseteq \Omega'\setminus B_{m\eps}(\gamma)$, 
\begin{equation}\label{circ-prop}
	\int_\ell L\xi \tau ds= \int_\ell \contbeta_\mu  \tau ds,
\end{equation}
i.~e., the circulation around a closed loop of the field $L\xi$ is determined by the measure $\mu$. 

\subsection{Extension and compactness}
\newcommand{\omegaout}{{\omega^{\mathrm{out}}}}
\newcommand{\omegain}{{\omega^{\mathrm{in}}}}
\newcommand{\omegaouti}[1]{{\omega_{#1}^{\mathrm{out}}}}
\newcommand{\omegaini}[1]{{\omega_{#1}^{\mathrm{in}}}}

This subsection is devoted to the construction of an extension of admissible strains inside the core, which is stated below and proved using some intermediate results in Lemma~\ref{lemmacurll1h1}, Lemma~\ref{lemmaextendbetacyl}, and Lemma~\ref{lemmaextendbetaball}. We assume that the core is concentrated along a dilute curve, in the sense of Definition \ref{defdiluteness-curve}.

\begin{proposition}\label{propextendcylincircles}
Let $\Omega\subseteq\R^3$ be a bounded Lipschiz set, $\rho>0$, $k\in(0,1]$, $\alpha\in(0,\frac14]$, and let $\gamma\in P(\Omega,k,\alpha)$. Let $q\in(1,\frac32]$, $\contbeta\in L^{q}(\Omega_{\rho}(\gamma);\R^{3\times 3})$ be such that $\curl\contbeta=0$ on $\Omega_{\rho}(\gamma)$, with $\rho\le \alpha^{1+q} k/48$. Then letting $r:=2k$ there is $\hat\contbeta \in L^{q}(\Omega_{r};\R^{3\times 3})$ such that $\curl\hat\contbeta=0$ on $\Omega_{r}\setminus\gamma$, 
\begin{equation}
    \|\hat\contbeta+\hat\contbeta^T\|_{L^{q}(\Omega_{r})}
    \le
    c \|\contbeta+\contbeta^T\|_{L^{q}(\Omega_{\rho}(\gamma))}
\end{equation}
and $\hat\contbeta=\contbeta$ on $\Omega_{r}\setminus B_{\rho_*}(\gamma)$, with $\rho_*:=12\rho/\alpha$. Further, $\hat\mu:=\curl\hat\beta$ belongs to $\calM^1(\Omega_{r})$, obeys $\supp\hat\mu\subseteq\gamma$, and
\begin{equation}\label{eq-ext-gamma0}
	\|\hat\contbeta+\hat\contbeta^T\|_{L^{2}(\Omega_{r}
    \setminus 
    B_{\rho_*}(\supp\hat\mu))}
	\le
	c \|\contbeta+\contbeta^T\|_{L^{2}(\Omega_{\rho}(\gamma))}.
\end{equation}

Moreover, if there is a measure $\tilde\mu\in \calM^1(\Omega)$ with $\supp\tilde\mu\subseteq\gamma$ and such that $(\tilde\mu,\beta)$ are $\rho$-compatible, then necessarily $\tilde\mu=\hat\mu$ in $\Omega_{r}$. 
\end{proposition}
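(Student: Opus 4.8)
The plan is to build $\hat\contbeta$ by a local cut-and-reflect construction in a neighborhood of $\gamma$, glued to $\contbeta$ itself in the bulk, and then to read off the properties of $\hat\mu=\curl\hat\contbeta$ from the construction.

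\emph{Geometric decomposition.} First I would exploit the diluteness of $\gamma\in P(\Omega,k,\alpha)$ to cover a neighborhood of $\gamma$ inside $\Omega_{r}$, $r=2k$, by pairwise disjoint pieces of two types: for each segment $s_j$ a solid cylinder $C_j$ of radius $\rho_*=12\rho/\alpha$ about the interior portion of $s_j$, and for each node $v$ at which several segments meet a ball $B_v$ of radius comparable to $\rho_*$. The assumption $\rho\le\alpha^{1+q}k/48$ is the key quantitative input: it gives $\rho_*\le\alpha^{q}k/4\le\alpha k/4$, so that $4\rho_*$ is smaller than both the minimal segment length $k$ and the minimal distance $\alpha k$ between disjoint segments. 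Hence the $C_j$ and $B_v$ are pairwise disjoint and contained in $\Omega_{r}$, the terminal endpoints of $\gamma$ lying on $\partial\Omega$ (outside $\Omega_{r}$), while inside each $B_v$ the incoming segments are nearly straight and separated in direction by the angle bound $\alpha$. On $\Omega_{r}\setminus B_{\rho_*}(\gamma)$ I simply set $\hat\contbeta:=\contbeta$, which already secures the matching condition $\hat\contbeta=\contbeta$ off $B_{\rho_*}(\gamma)$.

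\emph{Local extension and gluing.} On each cylinder I apply Lemma~\ref{lemmaextendbetacyl} to the hollow cylinder $C_j\setminus B_\rho(s_j)$, on which $\contbeta$ is given and curl-free, to obtain a curl-free extension to $C_j\setminus s_j$ that agrees with $\contbeta$ near $\partial C_j$ and satisfies $\|\hat\contbeta+\hat\contbeta^T\|_{L^p(C_j)}\le c\|\contbeta+\contbeta^T\|_{L^p(C_j\setminus B_\rho(s_j))}$ for $p\in\{q,2\}$; on each node ball I apply Lemma~\ref{lemmaextendbetaball} to $B_v\setminus B_\rho(\gamma)$ to extend $\contbeta$ to $B_v\setminus\gamma$ with the analogous estimate and boundary matching. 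Since every local extension coincides with $\contbeta$ in the collar where it is glued, the pieces assemble into a single $\hat\contbeta\in L^{q}(\Omega_{r};\R^{3\times3})$ with $\curl\hat\contbeta=0$ on $\Omega_{r}\setminus\gamma$. Summing the finitely many local estimates, whose reflection constants depend only on $\alpha$ and are therefore uniform in $\eps$ and in the number of segments, yields the $L^{q}$ bound, and re-running with $p=2$ yields the $L^2$ bound. Inequality \eqref{eq-ext-gamma0} on $\Omega_{r}\setminus B_{\rho_*}(\supp\hat\mu)$ then follows: outside $B_{\rho_*}(\gamma)$ one has $\hat\contbeta=\contbeta$, while on the annular collars of $B_{\rho_*}(\gamma)\setminus B_{\rho_*}(\supp\hat\mu)$ the $L^2$ reflection estimate controls $\hat\contbeta+\hat\contbeta^T$ by $\contbeta+\contbeta^T$ on $\Omega_{\rho}(\gamma)$.

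\emph{Identification of $\hat\mu$ and uniqueness.} Because $\hat\contbeta\in L^{q}$ with $q>1$ and $\curl\hat\contbeta=0$ off the polyhedral curve $\gamma$, the distribution $\hat\mu=\curl\hat\contbeta$ is concentrated on the one-dimensional set $\gamma$; I would then invoke Lemma~\ref{lemmacurll1h1} to conclude that it is necessarily of the form $\theta\otimes t\,\calH^1\LL\gamma$ with $t$ tangent to $\gamma$ and $\theta\in L^1$, the integrability of $\theta$ coming from the $L^q$ control through the identification of $\theta$ on each segment with the circulation of $\hat\contbeta$ around a small linking loop. Being a curl, $\hat\mu$ is automatically divergence-free, so $\hat\mu\in\calM^1(\Omega_{r})$ with $\supp\hat\mu\subseteq\gamma$. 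For the final clause, let $\tilde\mu\in\calM^1(\Omega)$ with $\supp\tilde\mu\subseteq\gamma$ be such that $(\tilde\mu,\contbeta)$ is $\rho$-compatible. By Definition~\ref{defbetamucompatible} we have $\contbeta=\beta_{\tilde\mu}+\beta_0$ with $\curl\beta_0=0$ on $\Omega\setminus B_\rho(\supp E\tilde\mu)$, so for every loop $\ell\subseteq\Omega_{r}\setminus B_{\rho_*}(\gamma)$ the circulation of $\contbeta$ along $\ell$ is determined by $\tilde\mu$ as in \eqref{circ-prop}; the same circulation is reproduced by $\hat\mu$ by construction. Hence $\hat\mu-\tilde\mu$ is a divergence-free measure in $\calM^1$ supported on $\gamma$ whose flux through every transversal disk vanishes, and such a measure on a polyhedral curve has constant multiplicity on each segment determined by that flux (with Kirchhoff balance at the nodes), so it must be zero, i.e.\ $\tilde\mu=\hat\mu$ in $\Omega_{r}$. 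I expect the principal difficulty to be the node-ball extension together with the verification that the glued curl carries no spurious singular part away from $\gamma$ and has genuinely $L^1$ multiplicity — that is, combining the reflection constructions of Lemma~\ref{lemmaextendbetacyl} and Lemma~\ref{lemmaextendbetaball} with the structure result Lemma~\ref{lemmacurll1h1} while keeping all constants dependent only on $\alpha$.
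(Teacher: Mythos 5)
Your overall plan is the paper's: a diluteness-based decomposition into cylinders around the segments and balls around the interior nodes, local extension by Lemma~\ref{lemmaextendbetacyl} and Lemma~\ref{lemmaextendbetaball}, gluing, identification of $\hat\mu$ through Lemma~\ref{lemmacurll1h1}, and uniqueness via circulations in the region where $\hat\contbeta=\contbeta$. But there is a genuine gap at exactly the point you flag as ``the principal difficulty'' and then do not resolve: the node-ball extension does \emph{not} come with an estimate ``analogous'' to the cylinder one. Lemma~\ref{lemmaextendbetaball} carries the error term $c\rho^{\frac3p-2}|\curl\contbeta|(B_{2\rho}\setminus B_\rho)$ in \eqref{eqlemmaextendbetaballsym}, and in the glued construction the field that must be fed into the node balls cannot be $\contbeta$ itself: the hypotheses of Lemma~\ref{lemmaextendbetaball} require a field defined on the hollow ball minus \emph{radial segments} (a null set), not minus tubular $\rho$-neighborhoods, so the construction is forced to be sequential — first extend across the cylinders to get $\tilde\contbeta$, then apply the ball lemma to $\tilde\contbeta$. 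The curl of $\tilde\contbeta$ in the annulus of a node ball is $\sum_i\hat\theta_i\otimes\tau_i\,\calH^1\LL s_i$, which is nonzero in general, so closing the $L^q$ bound requires an estimate of the multiplicities $\hat\theta_i$ in terms of the symmetric part of $\contbeta$. This is the key quantitative step of the paper's proof: Korn's inequality on unit-aspect-ratio subcylinders $C_i^*\subseteq\tilde C_i\setminus C_i$, combined with averaging the circulation over circles and over positions along the axis, yields $|\hat\theta_i|\le ck^{-1/q}\rho^{1-2/q}\|\contbeta+\contbeta^T\|_{L^q(\tilde C_i\setminus C_i)}$ (see \eqref{eqestrthetai}). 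Nothing in your write-up produces such a bound, and without it the main inequality cannot be closed.

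A symptom of this omission is that you misidentify the role of the hypothesis $\rho\le\alpha^{1+q}k/48$: disjointness of the covering pieces only needs $\rho\lesssim\alpha^2k$, while the exponent $1+q$ is used precisely to absorb the factor $\frac{\rho}{k\alpha^{q+1}}$ that arises when the $\hat\theta_i$-bounds are summed over the at most $c\alpha^{-2}$ segments meeting each node ball (H\"older plus diluteness, as in \eqref{eqbetahjatbeta}) and inserted into the ball-lemma error term. A second inaccuracy: you claim the local estimates hold ``for $p\in\{q,2\}$,'' but Lemma~\ref{lemmaextendbetacyl} gives $p=2$ only when the multiplicity $\theta$ vanishes, and Lemma~\ref{lemmaextendbetaball} is restricted to $p\le\frac32$ (it relies on Proposition~\ref{proprigiditycras}) with an $L^2$ version only in the curl-free case; this is exactly why \eqref{eq-ext-gamma0} is stated on $\Omega_r\setminus B_{\rho_*}(\supp\hat\mu)$ rather than on all of $\Omega_r$, i.e., only near the parts of $\gamma$ where the extension turned out to be curl-free. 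The remainder of your outline — the structure of $\hat\mu$ from Lemma~\ref{lemmacurll1h1} and the uniqueness clause via circulations around small loops where $\hat\contbeta=\contbeta$ — matches the paper and is sound once the multiplicity estimate is supplied.
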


A similar statement, with a similar proof, holds in nonlinear kinematics using geometric rigidity instead of Korn's inequality.

Note that we extend $\beta$ outside $\gamma$ obtaining a field with curl concentrated on $\supp\hat\mu\subseteq\gamma$. In general, it might happen that $\gamma_0:=\Omega_r\cap \gamma\setminus \supp\hat\mu\neq\emptyset$, in which case the result provides an $L^2$ extension of $\beta$ near $\gamma_0$ (see Lemma \ref{lemmaextendbetacyl}, case $\theta=0$, and \eqref{eq-ext-gamma0}).

We first show that a divergence-free measure concentrated on a segment has necessarily a constant multiplicity. This will be needed to identify the Burgers vector on each part of the dislocation line.

\begin{lemma}\label{lemmacurll1h1}
Let $B\subseteq\R^3$ be a ball, $\gamma$ a diameter of $B$, and $\beta\in L^1(B;\R^{d\times 3})$ such that $\curl\beta=0$ on $B\setminus\gamma$. Then, there is $b\in \R^d$ such that $\curl\beta=b\otimes t\calH^1\LL\gamma$, where $t$ is a unit vector tangent to $\gamma$.
\end{lemma}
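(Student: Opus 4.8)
The plan is to show that $\curl\beta$, as a distribution on $B$, is a vector-valued measure concentrated on $\gamma$ with a constant (in the $\calH^1$ sense) coefficient. Since $\curl\beta=0$ on the open set $B\setminus\gamma$ and $\beta\in L^1$, the distributional $\curl\beta$ is supported on $\gamma$, a segment. The essential point is a rigidity fact: a distribution supported on a line, which is the curl of an $L^1$ field (hence of order $\le 1$ and, by the divergence-free structure of a curl, constrained along the line), must be of the form $b\otimes t\,\calH^1\LL\gamma$ with $b$ \emph{constant}.

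First I would set up coordinates so that $\gamma$ lies along the $e_3$-axis, say $\gamma=\{(0,0,s):s\in(-R,R)\}$ with $t=e_3$, and work row by row: writing $\beta=(\beta^1,\dots,\beta^d)^T$ with each $\beta^i\in L^1(B;\R^3)$, it suffices to treat a single $\R^3$-valued field $v:=\beta^i$ with $\curl v=0$ off $\gamma$ and show $\curl v=b_i\, e_3\,\calH^1\LL\gamma$ for a scalar-triple $b_i$; reassembling the rows gives the matrix $b\in\R^d$. Because $\curl v$ is supported on the line $\gamma$ and has distributional order one, each of its three components is a distribution on $\R^3$ supported on $\gamma$; a distribution of finite order supported on a line is a sum of $\calH^1\LL\gamma$ against an $L^1$-or-measure density plus transverse derivatives of $\delta_\gamma$. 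The constraint that it be a \emph{curl}, i.e. $\Div(\curl v)=0$, forces the component along $t=e_3$ to have a density that is constant in $s$ and kills the transverse-derivative terms.

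Concretely, I would test $\curl v$ against $\varphi(x)\,\eta(x_3)$ with $\varphi$ a bump in the transverse $(x_1,x_2)$ plane and $\eta$ smooth in $x_3$. Using $\curl v=0$ on $B\setminus\gamma$, a Stokes/circulation argument (exactly the kind recorded in Lemma~\ref{lemmaelemenmtarycontinuouscirc}\ref{lemmaelementarycirc} and Remark~\ref{remark-Lcsiexact} for the discrete fields, here in the continuum) shows that the ``flux'' of $\curl v$ through any disk transverse to $\gamma$ is independent of the height $x_3$ at which the disk is placed; this independence is precisely the statement that the density is constant along $\gamma$. That the only surviving component is the tangential one, $b_i e_3$, follows from $\curl v$ being a curl, so divergence-free: any transverse component of a measure on a line would have nonzero distributional divergence. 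Combining over the rows $i=1,\dots,d$ yields $\curl\beta=b\otimes t\,\calH^1\LL\gamma$ with a constant $b\in\R^d$.

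The main obstacle is the regularity/structure step: making rigorous that a vector distribution supported on a segment, which is the curl of an $L^1$ field, has no transverse-derivative (dipole-type) pieces and has a constant tangential density. The $L^1$ hypothesis only gives that $\curl v$ is a distribution of order $\le 1$, so a priori one must exclude first-order (derivative-of-delta) contributions transverse to $\gamma$; the divergence-free constraint $\Div\curl v=0$ is what rules these out, but turning this into a clean argument requires care with the regularity of the density and with the endpoints of the diameter $\gamma$ (where $\gamma$ meets $\partial B$). I would handle the endpoints by restricting attention to interior subsegments and using that $\gamma$ is a full diameter so that $\curl v$ cannot ``leak'' transversally, and handle the structure by the transverse-flux constancy argument above, which simultaneously delivers constancy of $b$ and its tangential direction.
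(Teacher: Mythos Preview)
Your approach has a genuine gap, and the paper's Remark immediately following the Lemma points at exactly the failure mode. You propose to use only two facts: (a) $\curl v$ is a distribution of order $\le 1$ supported on $\gamma$, and (b) $\Div(\curl v)=0$. You then claim that (b) ``rules out'' the transverse-derivative (dipole) pieces. This is false: the distribution $\Lambda(\varphi):=\int_{-1}^1 \partial_2\varphi_1(x_1,0,0)\,dx_1$ is divergence-free, supported on $\gamma=(-1,1)e_1$, and of order $1$, yet it is not of the form $b\otimes t\,\calH^1\LL\gamma$. So the structure theorem for distributions on a line plus the divergence-free constraint is not enough; you must use that $\curl v$ is the curl of an $L^1$ field, not merely a distribution with those abstract properties. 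Your proposal never actually exploits $\beta\in L^1$ beyond bounding the order, and the ``flux through transverse disks'' argument you sketch would require traces of an $L^1$ field on circles, which are not defined.

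The paper's proof uses the $L^1$ hypothesis in an essential and different way: it splits $B$ into two simply-connected half-balls $B^\pm$ separated by a disk $\Sigma$ containing $\gamma$, lifts $\beta$ to potentials $u^\pm\in W^{1,1}(B^\pm)$ (this is where $\beta\in L^1$ enters), and assembles them into a single $SBV$ function whose distributional gradient is $\beta\,\calL^3+[v]\otimes\nu\,\calH^2\LL\Sigma$. Since $\curl Dv=0$ automatically, the entire $\curl\beta$ is encoded in the jump $[v]$ on the disk $\Sigma$; testing against functions supported off $\gamma$ shows $[v]$ is constant on each half of $\Sigma\setminus\gamma$, and a direct Stokes computation on $\Sigma$ gives $\curl\beta=(b^+-b^-)\otimes t\,\calH^1\LL\gamma$. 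The SBV lift is the mechanism that converts the $L^1$ hypothesis into the exclusion of dipole terms, and your argument lacks any analogue of it.
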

\begin{remark} 
It is however not true that any distribution $\Lambda\in (C_c^\infty(B_1;\R^3))'$ that is divergence free and supported on $\gamma$ has that form. For example, $\Lambda(\varphi):=\int_{-1}^1 \partial_2\varphi_1(x_1,0,0)dx_1$ is divergence free and supported on $(-1,1)e_1$.
\end{remark}
\begin{proof}[Proof of Lemma~\ref{lemmacurll1h1}]
After an affine change of variables, we can assume that $B=B_1(0)$ and that $\gamma=(-1,1)e_1$, and, working componentwise, that $d=1$. Let $B^\pm:=B\cap\{\pm x_3>0\}$. Since each of them is simply connected, there is $u^+\in W^{1,1}(B^+)$ and $u^-\in W^{1,1}(B^-)$ such that $\beta=Du^+$ on $B^+$, $\beta=Du^-$ on $B^-$. By \cite[Corollary~3.89]{AmbrosioFP}, the function
\begin{equation*}
	v:=u^+\chi_{B^+}+u^-\chi_{B^-}
\end{equation*}
belongs to $SBV(B)$ and
\begin{equation*}
	Dv=\beta\calL^3 \LL B + [v]\otimes e_3 \calH^2\LL \Sigma
\end{equation*}
where $\Sigma:=B\cap\{x_3=0\}=\partial B^+\cap\partial B^-$. Furthermore, as $\curl Dv=0$, for every $\varphi\in C^\infty_c(B;\R^3)$ we have
\begin{equation}\label{eqcurlbv}
	\int_{B} \beta\cdot\curl\varphi \, dx =
	-\int_\Sigma [v] (\curl \varphi) \cdot e_3 \,d\calH^2.
\end{equation}
Let $\Sigma^\pm:=\Sigma\cap\{\pm x_2>0\}$. Next we show that $[v]$ is constant on each of them. Given $\theta\in C_c^\infty(\Sigma^+)$, we can find $\eta\in C^\infty_c((-1,1))$ such that $\eta(0)=1$ and $x\mapsto \theta(x_1,x_2)\eta(x_3)$ belongs to $C_c^\infty(B\setminus\gamma)$. Then, using that $\curl\beta=0$ on $\supp (\theta\eta)\subseteq B\setminus\gamma$ implies $\int_B \beta \cdot \curl(\theta\eta e_i) dx=0$ for $i=1,2$,  from \eqref{eqcurlbv} we obtain
\begin{equation*}
	\int_{\Sigma^+} [v] \partial_i\theta \,d\calH^2=0 \text{ for }i=1,2
\end{equation*}
for every $\theta\in C_c^\infty(\Sigma^+)$, and the same on $\Sigma^-$. Therefore there are $b^\pm\in\R$ with $[v]=b^+$  on $\Sigma^+$ and $[v]=b^-$ on $\Sigma^-$, up to $\calH^2$-null sets.
	
Let now $\varphi\in C^\infty_c(B;\R^3)$. Then,
\begin{equation*}
\begin{split}
    \int_B 
        \beta \cdot\curl\varphi\, dx&=-\int_\Sigma [v] (\curl\varphi)\cdot e_3
    \,d\calH^2 
    \\ & = 
    b^+
    \int_{\Sigma^+} (\partial_2\varphi_1-\partial_1\varphi_2)d\calH^2
    +
    b^-
    \int_{\Sigma^-} (\partial_2\varphi_1-\partial_1\varphi_2)d\calH^2
    \\ & =
    \int_\gamma (b^--b^+) \varphi_1 \,d\calH^1,
\end{split}
\end{equation*}
hence $\curl\beta=(b^+-b^-)\otimes e_1 \calH^1\LL \gamma$, as claimed.
\end{proof}

The extension results needed for the proof of Proposition~\ref{propextendcylincircles} are stated here and proven in the Appendix. The first result concerns the extension inside cylinders, the second extension inside balls. We denote by $B'_r$ the two-dimensional ball of radius $r$ centered in the origin.

\begin{lemma}\label{lemmaextendbetacyl}
Let $\rho>0$, $\ell\ge\rho$, and $\omega:=\omegaout\cup\omegain$, with $\omegaout:=(B_{2\rho}'\setminus B_\rho')\times (0,\ell)$, $\omegain:= B_\rho'\times (0,\ell)$. Let $p\in [1,2)$, and assume that $\contbeta\in L^p(\omegaout;\R^{3\times 3})$, with $\curl\contbeta=0$ in $\omegaout$, is given. Then, there is $\tilde \contbeta\in L^p(\omega;\R^{3\times 3})$ such that $\tilde\contbeta=\contbeta$ on $\omegaout$, $\curl\tilde\contbeta=0$ on $\omega\setminus(\{0\}\times (0,\ell))$, and
\begin{equation}\label{eqlemmaextendbetacyllinelast}
    \|\tilde\contbeta+\tilde\contbeta^T\|_{L^p(\omegain)}
    \le 
    C \|\contbeta+\contbeta^T\|_{L^p(\omegaout)}.
\end{equation}
The constant {depends only on $p$.} Furthermore, there exists $\theta\in \R^3$ such that $\Curl\tilde\beta=\theta\otimes e_3\calH^1\LL (0,\ell)e_3$. If $\theta=0$, then \eqref{eqlemmaextendbetacyllinelast} holds for every $p\in[1,\infty)$.
\end{lemma}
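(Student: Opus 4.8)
The plan is to separate the topology of the curl-free field on the non–simply connected annular cylinder $\omegaout$ from the extension of its single-valued potential. First I would define the \emph{circulation} $\theta\in\R^3$ of $\contbeta$ about the axis, row by row, as $\theta=\oint_{\gamma_r}\contbeta\,\tau\,d\calH^1$ over a cross-sectional circle $\gamma_r=\{|x'|=r\}\times\{x_3=s\}$; since $\curl\contbeta=0$ on $\omegaout$ this is independent of $r\in(\rho,2\rho)$ and of $s$. Introducing the canonical divergence-free vortex field $K(x'):=\tfrac1{2\pi|x'|^2}(-x_2,x_1,0)$, which satisfies $\curl K=e_3\,\calH^1\LL(\{0\}\times\R)$ distributionally, I set $\contbeta_\theta:=\theta\otimes K$, so that $\Curl\contbeta_\theta=\theta\otimes e_3\calH^1\LL(0,\ell)e_3$. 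By construction $\contbeta-\contbeta_\theta$ is curl-free with \emph{zero} circulation on $\omegaout$, hence equals $\nabla u$ for a single-valued $u\in W^{1,p}(\omegaout;\R^3)$. The extension is then $\tilde\contbeta:=\contbeta_\theta+\nabla\tilde u$, where $\tilde u$ extends $u$; since $\tilde u$ is a genuine (single-valued) function, $\nabla\tilde u$ is automatically curl-free on all of $\omega$ with no spurious curl created across $\{|x'|=\rho\}$, so the only curl of $\tilde\contbeta$ is that of $\contbeta_\theta$, concentrated on the axis. The structure $\Curl\tilde\contbeta=\theta\otimes e_3\calH^1\LL(0,\ell)e_3$ can be read off directly, or recovered from Lemma~\ref{lemmacurll1h1} applied on cross-sectional balls.

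For the two contributions to \eqref{eqlemmaextendbetacyllinelast} I would argue as follows. The symmetric part of the singular field satisfies $|\contbeta_\theta+\contbeta_\theta^T|\sim|\theta|/|x'|$, so a direct computation in polar coordinates gives $\|\contbeta_\theta+\contbeta_\theta^T\|_{L^p(\omegain)}\le C\|\contbeta_\theta+\contbeta_\theta^T\|_{L^p(\omegaout)}$ with $C=C(p)$ uniform in $\ell$ (the two radial integrals over comparable annuli differ by a factor depending only on $p$); this integral is finite precisely because $p<2$, which is the sole source of the restriction on $p$ in the general case. For the potential part I would apply Korn's inequality on $\omegaout$ to find a constant skew matrix $A$ with $\|\nabla u-A\|_{L^p(\omegaout)}\le C\|\nabla u+(\nabla u)^T\|_{L^p(\omegaout)}$, and then extend $w:=u-Ax$ across $\{|x'|=\rho\}$ by a reflection across the cylinder, iterated so as to reach the axis (here the relatively standard reflection estimates enter). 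This yields $\tilde u$ with $\tilde u=u$ on $\omegaout$ and $\|\nabla\tilde u\|_{L^p(\omegain)}\le C\|\nabla w\|_{L^p(\omegaout)}\le C\|\contbeta+\contbeta^T\|_{L^p(\omegaout)}$, the last step using Korn; when $\theta=0$ these reflection estimates impose no constraint on $p$, giving the stated improvement to all $p\in[1,\infty)$. Uniformity of the constants in $\ell$ I would track through the product structure, the reflection acting only on the cross-sectional variables $x'$ and hence commuting with $\partial_3$.

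Combining the two parts reduces everything to controlling the symmetric part of the singular contribution \emph{on} $\omegaout$ by that of $\contbeta$ itself, i.e.\ to the bound $|\theta|\le C\|\contbeta+\contbeta^T\|_{L^p(\omegaout)}$. This is the step I expect to be the main obstacle, since a priori the circulation sees the full field and not only its strain. I would establish it by representing $\theta$ as a fixed bounded linear functional of $\contbeta$ (averaging the circulation over $r$ and $s$, so that $\theta=\int_{\omegaout}\contbeta:G\,dx$ for an explicit bounded $G$) and then exploiting that, on curl-free fields, $G$ may be modified by the $\curl$ of a test tensor; choosing that modification to render the representing tensor symmetric leaves only $\contbeta+\contbeta^T$ in the pairing. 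Equivalently, and more robustly, I would invoke the generalized Korn/rigidity estimate of Proposition~\ref{proprigiditycras}, which controls $\contbeta$ on $\omegaout$ by its symmetrized part plus its curl (here vanishing) up to an infinitesimal rigid motion; since the circulation functional annihilates rigid motions, $|\theta|\le C\|\contbeta+\contbeta^T\|_{L^p(\omegaout)}$ follows, closing the estimate.
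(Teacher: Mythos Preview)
Your decomposition via the explicit vortex field $\contbeta_\theta=\theta\otimes K$ is conceptually clean, but there is a genuine gap in the uniformity of constants. You apply Korn's inequality on the whole domain $\omegaout=(B'_{2\rho}\setminus B'_\rho)\times(0,\ell)$ to obtain a \emph{single} skew matrix $A$ with $\|\nabla u-A\|_{L^p(\omegaout)}\le C\|\nabla u+(\nabla u)^T\|_{L^p(\omegaout)}$. But the Korn constant for this domain grows with the aspect ratio $\ell/\rho$: take $u(x)=f(x_3)(-x_2,x_1,0)$, for which the symmetrized gradient scales like $|x'||f'|$ while the distance to any constant skew matrix scales like $|f-c|$, so Korn forces a one-dimensional Poincar\'e constant of order $\ell$. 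Your remark that the reflection acts only in $x'$ and commutes with $\partial_3$ is correct, but it says nothing about the Korn step, which is where the $\ell$-dependence enters. The same issue contaminates your bound on $|\theta|$ via Proposition~\ref{proprigiditycras}: that estimate also has a domain-dependent constant, and in any case is stated only for $q\le\frac32$, whereas the lemma claims $p<2$.

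The paper avoids this by \emph{not} using a single skew matrix. Instead (its Step~2) it partitions $\omegaout$ into $N\sim\ell/\rho$ overlapping slabs $\omegaouti{i}$ of bounded aspect ratio, applies Korn on each to obtain $R_i\in\R^{3\times3}_\skw$, and interpolates the affine isometries $u_i(x)=R_ix+d_i$ with a partition of unity in $x_3$ to produce a smooth map $u$ with $\|Du+Du^T\|_{L^p(\omega)}+\|\contbeta-Du\|_{L^p(\omegaout)}\le C\|\contbeta+\contbeta^T\|_{L^p(\omegaout)}$, $C$ independent of $\ell$. Only then is the radial reflection (their Step~1, which is essentially your reflection together with an averaging over the reflection parameter to kill the Jacobian singularity when $p<2$) applied to $\contbeta-Du$. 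Your vortex-subtraction route could likely be repaired by inserting the same localized-Korn-plus-interpolation argument, but as written it does not deliver constants independent of $\ell$.
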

In the case of the ball, the rigidity statement for incompatible fields from \cite{ContiGarroniRigidity} presented in Proposition~\ref{proprigiditycras} below is needed, and therefore we need to assume $p\in[1,\frac32]$.
\begin{lemma}\label{lemmaextendbetaball}
Let $\rho>0$, $p\in [1,\frac32]$, and fix some vectors $v_1, \dots , v_K\in S^2$. Define $\omegaout:=(B_{2\rho}\setminus B_\rho) \setminus \cup_i [\rho,2\rho) v_i$, and $\omegain:=B_{\rho}\setminus \cup_i [0,\rho)v_i$. Assume that $\contbeta\in L^p(\omegaout;\R^{3\times 3})$, with $\curl\contbeta=0$ in $\omegaout$, is given. 	Then, with $\omega:=\omegain\cup\omegaout$, there is $\hat \contbeta\in L^p(\omega;\R^{3\times 3})$ such that $\hat\contbeta=\contbeta$ on $\omegaout$, $\curl\hat\contbeta=0$ on $\omega$, and
\begin{equation}\label{eqlemmaextendbetaballsym}
    \|\hat\contbeta+\hat\contbeta^T\|_{L^{p}(\omegain)}
    \le 
    c \|\contbeta+\contbeta^T\|_{L^{p}(\omegaout)} 
    + 
    c \rho^{\frac3p-2} |\curl \contbeta|(B_{2\rho}\setminus B_\rho).
\end{equation}
The constant depends only on $p$. If $\Curl\beta=0$ on $B_{2\rho}\setminus B_\rho$, then
\begin{equation}\label{eqlemmaextendbetaballsym0}
	\|\hat\contbeta+\hat\contbeta^T\|_{L^{2}(\omegain)}
    \le c 
    \|\contbeta+\contbeta^T\|_{L^{2}(\omegaout)} .
\end{equation}
Furthermore, $\curl\hat\beta = \sum_i\theta_i\otimes v_i \calH^1\LL\cup_i[0,2\rho)v_i$ for some $\theta_i\in \R^3$.
\end{lemma}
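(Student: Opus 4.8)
The plan is to extend $\contbeta$ into the inner ball by \emph{radial reflection of a potential}, and to obtain the symmetric-part estimate by first reducing $\contbeta$ to a skew constant via the rigidity of Proposition~\ref{proprigiditycras} and reflecting only the reduced field. First I would introduce the bi-Lipschitz reflection $R\colon \overline{B_\rho}\setminus\{0\}\to \overline{B_{2\rho}}\setminus B_\rho$, $R(x):=(2\rho-|x|)\,x/|x|$, which fixes the sphere $\{|x|=\rho\}$ pointwise, maps $\omegain$ onto $\omegaout$, and maps each inner slit $[0,\rho)v_i$ onto $[\rho,2\rho)v_i$. Since $R$ is positively homogeneous of degree one, the Lipschitz constants of $R$ and $R^{-1}$ on the shell are bounded independently of $\rho$, so precomposition with $R$ is bounded on $L^p$ with a universal constant.

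Since $\curl\contbeta=0$ on $\omegaout$, each row of $\contbeta$ is a locally exact field on $\omegaout$, which is homotopy equivalent to $S^2$ minus $K$ points. Hence there is a (multivalued) potential $u$ with $\contbeta=Du$, whose only nontrivial periods are vectors $\theta_i\in\R^3$ obtained by integrating around a small loop linking the slit $[\rho,2\rho)v_i$ once; because the product of these loops is contractible on $S^2$ one automatically gets $\sum_i\theta_i=0$, which is what makes the resulting line measure divergence free at the centre. I would then set $\hat u:=u\circ R$ on $\omegain$ and $\hat\contbeta_0:=D\hat u=(\contbeta\circ R)\,DR$. As $R$ preserves linking with each slit, $\hat u$ has the same periods $\theta_i$, now around $[0,\rho)v_i$, and since $R$ is the identity on $\{|x|=\rho\}$ the traces of $u$ and $\hat u$ agree there; thus the glued potential is continuous across the interface sphere, its gradient carries no singular part on $\{|x|=\rho\}$, and the concatenated segment $[0,2\rho)v_i$ carries the constant Burgers vector $\theta_i$. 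This yields $\curl\hat\contbeta=\sum_i\theta_i\otimes v_i\,\calH^1\LL\bigcup_i[0,2\rho)v_i$, the asserted form of the curl; the constancy of each $\theta_i$ along its slit is exactly the content of Lemma~\ref{lemmacurll1h1}.

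The quantitative estimate \eqref{eqlemmaextendbetaballsym} is the main obstacle, because reflection does not commute with symmetrization: $\mathrm{sym}\,\hat\contbeta_0$ is \emph{not} the reflection of $\mathrm{sym}\,\contbeta$, the Jacobian $DR$ mixing the components. To bypass this I would first apply the rigidity estimate for incompatible fields (Proposition~\ref{proprigiditycras}) to $\contbeta$ on the Lipschitz shell $B_{2\rho}\setminus B_\rho$, on which $\curl\contbeta$ is the finite line measure $\sum_i\theta_i\otimes v_i\,\calH^1\LL[\rho,2\rho)v_i$, to produce $S\in\Rskw$ with
\[
\|\contbeta-S\|_{L^p(B_{2\rho}\setminus B_\rho)}
\le c\|\mathrm{sym}\,\contbeta\|_{L^p(B_{2\rho}\setminus B_\rho)}
+c\,\rho^{\frac3p-2}\,|\curl\contbeta|(B_{2\rho}\setminus B_\rho),
\]
the scaling factor $\rho^{\frac3p-2}$ and the restriction $p\le\frac32$ coming from the $L^{3/2}$-type integrability of the field generated by a curl concentrated on a one-dimensional set in three dimensions. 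I would then define $\hat\contbeta:=((\contbeta-S)\circ R)\,DR+S$; adding the skew constant leaves the curl unchanged, the trace matching at $\{|x|=\rho\}$ still glues $\hat\contbeta$ to $\contbeta$ into a field that is curl-free off $\bigcup_i[0,2\rho)v_i$, and since $S+S^T=0$ we have $\mathrm{sym}\,\hat\contbeta=\mathrm{sym}\,\bigl(((\contbeta-S)\circ R)\,DR\bigr)$. The universal $L^p$-bound for precomposition with $R$ then gives
\[
\|\hat\contbeta+\hat\contbeta^T\|_{L^p(\omegain)}
\le c\|\contbeta-S\|_{L^p(\omegaout)}
\le c\|\contbeta+\contbeta^T\|_{L^p(\omegaout)}
+c\,\rho^{\frac3p-2}\,|\curl\contbeta|(B_{2\rho}\setminus B_\rho),
\]
which is \eqref{eqlemmaextendbetaballsym}.

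Finally, when $\curl\contbeta=0$ on the whole shell the potential $u$ is single valued (no slits, and the shell is simply connected), so one may invoke the classical Korn inequality with $p=2$ — with no curl term and no constraint $p\le\frac32$ — and reflect exactly as above to obtain a curl-free extension satisfying \eqref{eqlemmaextendbetaballsym0}. I expect the genuine difficulty to be concentrated in the rigidity step: both the choice of a single $S$ that simultaneously controls the reflected field and the correct bookkeeping of the scaling factor $\rho^{\frac3p-2}$ hinge on applying Proposition~\ref{proprigiditycras} in scale-invariant form, whereas the reflection itself transports the curl-free structure and the concentrated curl essentially for free.
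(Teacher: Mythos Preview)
Your overall strategy is exactly the paper's: define the radial reflection $\varphi(x)=(2\rho-|x|)\,x/|x|$ (your $R$), set the extension to be $D\varphi^T\,(\contbeta\circ\varphi)$ (equivalently $(\contbeta\circ R)\,DR$), then apply Proposition~\ref{proprigiditycras} on the shell to subtract a skew matrix $S$ before reflecting and add it back afterwards, so that only $\|\contbeta-S\|_{L^p}$ enters the bound; the structure of $\curl\hat\contbeta$ follows from Lemma~\ref{lemmacurll1h1} and trace matching on $\{|x|=\rho\}$. So the architecture is correct and matches the paper.

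There is, however, a real gap in your justification of the $L^p$ bound. The map $R$ is \emph{not} positively homogeneous of degree one, and more importantly it is not bi-Lipschitz on $\omegain$: from the eigenvalues $(2\rho/|x|-1,\,2\rho/|x|-1,\,-1)$ of $DR$ one sees $|DR(x)|\sim 2\rho/|x|$ as $x\to0$. Thus ``precomposition with $R$ is bounded on $L^p$'' is true for $\contbeta\mapsto\contbeta\circ R$ (since $|\det DR|\ge1$), but this is \emph{not} what you need: your extension carries the extra factor $DR$, and $\|(\contbeta\circ R)\,DR\|_{L^p(\omegain)}$ cannot be controlled by a Lipschitz argument. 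The paper's fix is the pointwise inequality
\[
|D\varphi(x)|^p \le c\,|\det D\varphi(x)| \qquad\text{for } p\le 2,
\]
which follows from the eigenvalue list (two large eigenvalues, one of size $1$), so that after the change of variables $y=\varphi(x)$ the singular factor $|D\varphi|^p$ is exactly absorbed by the Jacobian. This is where the restriction $p\le 2$ in Step~1 comes from and is the substantive computation you are missing; once this is in place, the rest of your argument goes through as written.
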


\begin{figure}
\begin{center}
    \includegraphics[width=10cm]{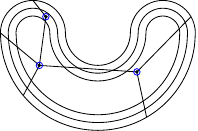}
\end{center}
\caption{Sketch of the construction of the segments in Proposition \ref{propextendcylincircles}. The three sets are $\Omega$, $\Omega_k$, $\Omega_{2k}$. The blue circles are the regions $B_d(z_i)$ which are eliminated from $\gamma$ to form $\hat\gamma$.}
\label{fig-cyls3}
\end{figure}
  
\begin{figure}
\begin{center}
    \includegraphics[width=12cm]{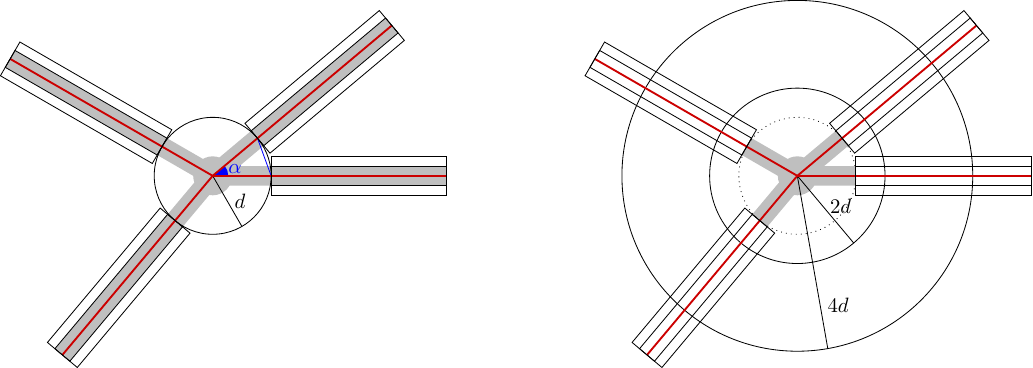}
\end{center}
\caption{Sketch of the construction in Proposition \ref{propextendcylincircles} around one of the points $z_i$. The red segments are the polygonal $\gamma$, the grey area denotes the one where $\contbeta$ needs to be defined. Left: Since the angle between two segments that intersect is at least $\alpha$, the distance between their parts outside $B_d(z_i)$ is at least $2d\sin\frac\alpha2$. The cylinders of radius $\rho$ with the segments $s_i$ as axis are therefore disjoint. Right: in the first step the curl is concentrated in the cylinders away from the $z_i$, in the second step it is concentrated inside balls around the points $z_i$.}
\label{fig-cyls2}
\end{figure}

\begin{figure}
\begin{center}
    \includegraphics[width=8cm]{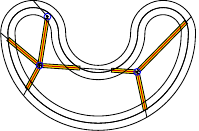}
\end{center}
\caption{Sketch of the construction in Proposition \ref{propextendcylincircles}. The outer set is $\Omega$, on which $\gamma$ and $\contbeta$ are defined. The intermediate one is $\Omega_k$, which is used to select the segments $s_i$ (marked red). The inner one is $\Omega_{2k}$, in which the end result is defined.}
\label{fig-cyls}
\end{figure}

We finally present the proof of Proposition~\ref{propextendcylincircles}. 
\newcommand{\raggioin}{d}
\newcommand{\raggioout}{D}
\begin{proof}[Proof of Proposition~\ref{propextendcylincircles}]
The set $\gamma$ consists of finitely many closed segments of length at least $k$ and of distance (if they are disjoint) at least $\alpha k$. We denote by $z_1, \dots, z_H$ their endpoints inside $\Omega$. If one of the segments does not have $z_i$ as an endpoint, then its distance from $z_i$ is at least $\alpha k$. Since each segment has length at least $k$, the number of points is bounded by $H\le 2\calH^1(\gamma)/k$.

Fix $\raggioin:=\frac{2\rho}{\sin(\alpha/2)}$, the reason will become clear below. We first observe that since $\alpha\in (0,\frac14]$  one has
\begin{equation}\label{eq-d-rho-k}
    16\rho
    \le 
    \raggioin
    \le 
    \frac{6}{\alpha}\rho
    \le
    \frac18 \alpha k
    \le 
    \frac1{32} k.
\end{equation} 
The third inequality is a consequence of the fact that $\rho\le \alpha^2k/48$. The second inequality follows from $\sin \frac\alpha2\ge\frac\alpha 3$ for $\alpha\in[0, \frac\pi2]$.

The set $\hat\gamma:=\gamma\setminus \bigcup_i B_\raggioin(z_i)$ consists of finitely many segments. We show that the distance between any two of them is at least $4\rho$. Consider two of them. If the corresponding segments in $\gamma$ were disjoint, then the distance is at least $\alpha k$, which by \eqref{eq-d-rho-k} is larger than $4\rho$. If they shared a point, say $z_i$, the distance is given by the distance between the two points from which they exit $B_d(z_i)$. Since the angle between the segments is at least $\alpha$, the distance is at least $2\raggioin\sin \frac\alpha 2$, which by the definition of $\raggioin$ equals $4\rho$ (see Figure~\ref{fig-cyls2}).

Since every segment comprised in $\gamma$ has length at least $k$, every segment comprised in $\hat\gamma$ has length at least $k-2d$. By \eqref{eq-d-rho-k}, we have that
\begin{equation}\label{eqk2drho}
    k-2d\ge \frac{k}2.
\end{equation}
The set $\hat\gamma\cap \Omega_k$ is a union of segments. We denote by $s_1, \dots, s_N$ the segments comprised in $\hat\gamma\cap \Omega_k$ which have length at least $k/2$, see Figure~\ref{fig-cyls3} and Figure~\ref{fig-cyls}. By \eqref{eqk2drho}, all the remaining segments in $\hat\gamma\cap \Omega_k$ have at least an endpoint in $\partial\Omega_k$, so that
\begin{equation}\label{eqk2drhos2om}
    \hat\gamma\cap \Omega_{\frac{3}2k}
    \subseteq 
    \bigcup_{i=1}^N s_i  
    \subseteq
    \hat\gamma\cap\Omega_k.
\end{equation}
Then, $\dist(s_i, \partial\Omega)\ge k\ge 2\rho$. 
By the construction of $\hat\gamma$, if $s_i$ and $s_j$ are not part of the same straight line, then $\dist(s_i, s_j)\ge 4\rho$. For every $s_i$ we consider the cylinder $C_i$ with radius $\rho$ and axis $s_i$, and analogously the cylinder $\tilde C_i$ with radius $2\rho$. Each of the two families consists of pairwise disjoint cylinders. As $\tilde C_i\setminus C_i\subseteq\Omega_\rho(\gamma)$, $\curl\contbeta=0$ on $\tilde C_i\setminus C_i$ for all $i$. By Lemma~\ref{lemmaextendbetacyl} there is an extension $\tilde\contbeta : (\Omega_k\setminus B_\rho(\gamma))\bigcup\cup_i C_i\to\R^{3\times 3}$ such that $\tilde\contbeta=\contbeta$ on $\Omega_k\setminus B_\rho(\gamma)$, $\curl \tilde\contbeta = 0$ in $\cup_i \tilde C_i\setminus s_i$, and 
\begin{equation}\label{eqtildebetabvetaprob}
    \|\tilde\contbeta+\tilde\contbeta^T\|_{L^q(\cup_i C_i)} 
    \le 
    c \|\contbeta+\contbeta^T\|_{L^q(\cup_i (\tilde C_i\setminus C_i))} .
\end{equation}
Since the constant in Lemma~\ref{lemmaextendbetacyl} depends only on $q$, and the cylinders $\tilde C_i$ are disjoint, the constant in the above estimate also depends only on $q$. Additionally, we obtain that for every $i$ there is  $\hat\theta_i\in \R^3$ such that  
\begin{equation}\label{eq-thetacappuccio}
	(\curl{\tilde\contbeta})\LL C_i 
    =
    \hat\theta_i\otimes \tau_i \calH^1\LL s_i,
\end{equation}
where $\tau_i$ is a unit vector parallel to $s_i$. If $\hat\theta_i=0$, then Lemma~\ref{lemmaextendbetacyl} gives also
\begin{equation}\label{eqtildebetabvetaprobp2}
    \|\tilde\contbeta+\tilde\contbeta^T\|_{L^2(C_i)} 
    \le 
    c \|\contbeta+\contbeta^T\|_{L^2(\tilde C_i\setminus C_i)} .
\end{equation}

In the next step, we use the extension inside the balls, as stated in Lemma \ref{lemmaextendbetaball}. We first estimate the total variation of $\curl\contbeta$ in the cylinders. This is done using the rigidity estimate in a subset of the cylinders with fixed aspect ratio and then using the standard circulation estimate.

Let $C_i^*$ be a part of $\tilde C_i\setminus C_i$ with axis of length $\rho$, which is a rotated and translated copy of $(B_{2\rho}'\setminus B_\rho')\times (0,\rho)$, and let $s_i^*$ be its axis. This exists since $\calH^1(s_i)\ge k/2\ge\rho$. By Korn's inequality, there is $S_i\in \R^{3\times 3}_\skw$ such that
\begin{equation}
    \|\contbeta-S_i\|_{L^{q}(C_i^*)}
    \le  
    c \|\contbeta+\contbeta^T\|_{L^{q}(C_i^*)}.
\end{equation}
We estimate, denoting by $\gamma_t$ a circle of radius $t\in(\rho,2\rho)$ centered in the origin and contained in a plane orthogonal to $s_i$, for a.~e.~$y\in s_i^*$ and a.~e.~$t\in (\rho,2\rho)$,
\begin{equation*}
    |\hat \theta_i| 
    \le
    \int_{y+\gamma_t} |\contbeta-S_i| d\calH^1.
\end{equation*}
Integrating over all $t\in (\rho,2\rho)$ and $y\in s_i^*$,
\begin{equation*}
\begin{split}
    \rho^2\, |\hat\theta_i| \le&
    \int_{s_i^*}
    \int_{\rho}^{2\rho}
    \int_{y+\gamma_t} |\contbeta-S_i| d\calH^1dt d\calH^1(y) 
    \\ = &
    \int_{C_i^*} |\contbeta-S_i| dx  
    \le 
    c \rho^{3(1-\frac1q)} \|\contbeta-S_i\|_{L^{q}(C_i^*)}
    \\ \le & 
    c \rho^{3(1-\frac1q)} \|\contbeta+\contbeta^T\|_{L^{q}(C_i^*)}.
\end{split}
\end{equation*}
Therefore,
\begin{equation*}
    |\hat\theta_i|^q
    \le 
    c \rho^{q-3} \|\contbeta+\contbeta^T \|_{L^{q}(C_i^*)}^q.
\end{equation*}
We remark that every $\tilde C_i\setminus C_i$ contains $\lfloor \frac{\calH^1(s_i)}{\rho}\rfloor\geq \lfloor\frac{k}{2\rho}\rfloor\geq \frac{k}{4\rho}$ disjoint copies of $C_i^*$. Summing over them, we conclude that
\begin{equation}\label{eqestrthetai}
    |\hat\theta_i|
    \le 
    c k^{-\frac1q} \rho^{1-\frac 2q} \|\tilde \contbeta
    +
    \tilde \contbeta^T\|_{L^{q}(\tilde C_i\setminus C_i)} ,
\end{equation}
with a constant that depends only on $q$.

The top and bottom faces of the cylinders $C_i$ are disks of radius $\rho$ centered in an endpoint of $s_i$
(recall~\eqref{eqk2drho} and~\eqref{eqk2drhos2om}). If the endpoint belongs to $\partial\Omega_k$, then the entire face does not intersect $\Omega_{2k}$ and we can ignore it. Otherwise, the endpoint belongs to
$\partial B_\raggioin(z_j)$ for one of the points $z_j$ introduced at the beginning of the proof, and the entire face is contained in $B_{2\raggioin}(z_j)$. We apply Lemma \ref{lemmaextendbetaball} to the balls $B_j:=B_{{2\raggioin}}(z_j)\subset\subset \tilde B_j := B_{4\raggioin}(z_j)$ for all $z_j\in\Omega_{\frac32k}$. The balls $\tilde B_j$  are disjoint since $|z_i-z_j|\ge \alpha k$ for all $i\ne j$. We obtain $\hat\contbeta\in L^1(\Omega_{2k};\R^{3\times 3})$ such that $\hat\beta=\tilde \beta$ in $\Omega_{2k}\setminus \cup_j B_{2d}(z_j)$, with $\curl\hat\contbeta=0$ on $\Omega_{2k}\setminus \gamma$,
\begin{equation}\label{eqextendbbprime}
\begin{split}
    \|\hat\contbeta+\hat\contbeta^T\|_{L^{q}(B_j)} 
    \le &
    c \|\tilde\contbeta+\hat\contbeta^T\|_{L^{q}(\tilde B_j\setminus B_j)}
    +
    c \raggioin^{\frac3q-2}
    |\curl\tilde\contbeta|(\tilde B_j\setminus B_j) 
    \\ \le & 
    c \|\tilde\contbeta+\tilde\contbeta^T\|_{L^{q}(\tilde B_j\setminus B_j)}
    +
    c \raggioin^{\frac3q-2}
    \sum_{i: s_i\cap B_j\ne \emptyset} \raggioin|\hat\theta_i|
    \\ \le & 
    c \|\tilde\contbeta+\tilde\contbeta^T\|_{L^{q}(\tilde B_j\setminus B_j)}
    \\ & +
    c k^{-\frac1q}{\rho^{1-\frac2q} \raggioin^{\frac3q-1}} 
    \sum_{i: s_i\cap B_j\ne \emptyset} \|\tilde\contbeta
    +
    \tilde\contbeta^T\|_{L^{q}(\tilde C_i\setminus C_i)},
\end{split}
\end{equation}
where in the last step we have used \eqref{eqestrthetai}. If all $\hat\theta_i$ are zero, then Lemma \ref{lemmaextendbetaball} also gives
\begin{equation}\label{eqextendbbprimep2}
    \|\hat\contbeta+\hat\contbeta^T\|_{L^{2}(B_j)} 
    \le 
    c \|\tilde\contbeta+\hat\contbeta^T\|_{L^{2}(\tilde B_j\setminus B_j)}.
\end{equation}
From \eqref{eq-d-rho-k}, we recall $\raggioin\le 6\rho/\alpha$, we take the $q$-th power of \eqref{eqextendbbprime} and sum to obtain
\begin{equation}\label{eqbetahjatbeta}
\begin{split}
    \sum_j \|\hat\contbeta+\hat\contbeta^T\|_{L^{q}(B_j)}^{q} 
    \le & 
    c  \sum_j 
    \|\tilde\contbeta+\tilde\contbeta^T\|_{L^{q}(\tilde B_j \setminus B_j)}^{q}
    \\ & +
    c\frac{\rho}{k \alpha^{3-q}} 
    \sum_j \Bigl(\sum_{i: s_i\cap B_j\ne \emptyset}
    \|\tilde\contbeta+\tilde\contbeta^T\|_{L^{q}(\tilde C_i\setminus C_i)}\Bigr)^{q}.
\end{split}
\end{equation}
By Hölder's inequality and the fact that diluteness implies $\#\{i: s_i\cap B_j\ne \emptyset\}\le c\alpha^{-2}$ we get
\begin{equation}
    \Bigl(\sum_{i: s_i\cap B_j\ne \emptyset}
    \|\tilde\contbeta+\tilde\contbeta^T\|_{L^{q}(\tilde C_i\setminus C_i)}\Bigr)^q
    \le 
    c \alpha^{-2(q-1)}\sum_{i: s_i\cap B_j\ne \emptyset} \|\tilde\contbeta+\tilde\contbeta^T\|_{L^{q}(\tilde C_i\setminus C_i)}^q.
\end{equation}
With \eqref{eqtildebetabvetaprob} and
\eqref{eqbetahjatbeta}, using that 
the balls are disjoint, that each cylinder touches only two balls, and that the cylinders are disjoint, we conclude
\begin{equation}
\begin{split}
    \|\hat\contbeta+\hat\contbeta^T\|_{L^{q}(\Omega_{2k})}^{q} 
    \le & 
    c \|\contbeta+\contbeta^T\|_{L^{q}(\Omega_\rho(\gamma))}^{q}
    +
    c\frac{\rho}{k \alpha^{q+1}}
    \|\contbeta+\contbeta^T\|_{L^{q}(\Omega_\rho(\gamma))}^{q}
\end{split}
\end{equation}
with
$\hat\beta=\beta$ in $\Omega_{2k}\setminus B_{2d}(\gamma)$. 
Using Lemma~\ref{lemmacurll1h1} on small balls centered in points of $\gamma$, we obtain that $\hat\mu:=\curl\hat\beta$ is a measure in $\calM^1(\Omega_{2k})$, supported on a subset of $\gamma$ and it is given by  $\hat\mu=\sum_i \hat\theta_i\otimes \tau_i\calH^1\LL \gamma_i$, with the vectors $\hat\theta_i\in\R^3$ given in \eqref{eq-thetacappuccio}.

Similarly, using \eqref{eqtildebetabvetaprobp2} and \eqref{eqextendbbprimep2} we obtain \eqref{eq-ext-gamma0}.

Assume now that there is a measure $\mu =\sum_i \theta_i\otimes \tau_i\calH^1\LL \gamma_i \in \calM^1(\Omega)$ with $\supp\mu\subseteq\gamma$ and such that $(\mu,\beta)$ are $\rho$-compatible. We only need to show that, if $\gamma_i\cap \Omega_{2k}$ is nonempty, then $\hat\theta_i=\theta_i$.

Pick $\rho'\in (\rho_*, 2\rho_*)$ and a point $y\in \gamma_i\cap \Omega_k$ which is at distance at least $k/4$ from the endpoints of $\gamma_i$. Let $\ell_i$ be a circle of radius $\rho'$ centered in $y$ and orthogonal to $\gamma_i$. By diluteness, $\dist(\ell_i,\gamma)>\rho$, and $\ell_i\subseteq \Omega_\rho(\gamma)$. Since $\beta=\hat\beta$ on $\Omega_r\setminus B_{\rho_*}(\gamma)$, for almost every $\rho'$ and $y$ we have
\begin{equation}
    \theta_i=\int_{\ell_i} \beta t\, ds 
    =
    \int_{\ell_i} \hat\beta t \, ds = \hat\theta_i.
\end{equation}
Therefore, $\curl\hat\beta=\mu$.
\end{proof}

Finally we obtain the desired compactness result.

\begin{theorem}\label{theo-compact-cont}
Let $\Omega\subseteq\R^3$ be a bounded Lipschitz set with connected boundary, $\mu_\eps\in \calM^1_\calB(\Omega)$ with $\mu_\eps=\eps\theta_\eps\otimes t_\eps \calH^1\LL\gamma_\eps$ which is $(k_\eps,\alpha_\eps)$ dilute, $\beta_\eps\in L^2(\Omega_{\rho_\eps}(\gamma_\eps);\R^{3\times 3})$, such that the two are $\rho_\eps$ compatible,
\begin{equation}
    \liminf_{\eps\to0}
    \frac{1}{\eps^2\ln\frac1\eps} 
    \int_{\Omega_{\rho_\eps}(\gamma_\eps)} 
        |\contbeta_\eps+\contbeta_\eps^T|^2 
    dx 
    <
    \infty,
\end{equation}
with $\lim_{\eps\to0}\frac{\ln \rho_\eps}{\ln\eps}=1$ and assume \eqref{eqdefheps}.
Then, there is $(\mu,\eta) \in\mathcal M^1_\calB(\Omega)\times L^1_\loc(\Omega;\R^{3\times 3}) $ and a subsequence such that 
\begin{equation}\label{eqweakconvmuepsmu0}
    \frac1{\eps_j}\mu_{\eps_j}\weakstarto\mu \text{ in $\Omega'$}, 
\end{equation}
for every $\Omega'\subset\subset \Omega$, there is a sequence $\rho^*_{\eps_j} \to 0$ with $\lim_{\eps\to 0}  \frac{\ln \rho^*_{\eps_j}}{\ln {\eps_j}}=1$, and there exists $S_j\in\R^{3\times 3}_\skw$ such that
\begin{equation}\label{eqetaepstheoremlin2}
    \frac{\beta_{\eps_j}-S_j}{\eps_j(\ln\frac1{\eps_j})^{1/2}}  
    \chi_{\Omega\setminus B_{\rho^*_{\eps_j}}(\gamma_{\eps_j})}
    \weakto 
    \conteta \text{ weakly in } L^{\frac32}_\loc(\Omega;\R^{3\times 3}).
\end{equation}
\end{theorem}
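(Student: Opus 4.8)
The plan is to feed the extension result Proposition~\ref{propextendcylincircles} into the rigidity estimate Proposition~\ref{proprigiditycras}, after first converting the energy bound into control of the total Burgers content of $\mu_\eps$ by a logarithmic self-energy estimate.

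\emph{Step 1 (extension).} For $\eps$ small the hypotheses $\alpha_\eps\in(0,\tfrac14]$, $k_\eps\in(0,1]$ and $\rho_\eps\le\alpha_\eps^{5/2}k_\eps/48$ hold: indeed \eqref{eqdefheps} gives $\log(\alpha_\eps^{5/2}k_\eps)/\log\eps\to0$ whereas $\log\rho_\eps/\log\eps\to1$, so $\rho_\eps$ decays strictly faster. I apply Proposition~\ref{propextendcylincircles} with $q=\tfrac32$, $\rho=\rho_\eps$, $k=k_\eps$, $\alpha=\alpha_\eps$ to extend $\beta_\eps$ to $\hat\beta_\eps\in L^{3/2}(\Omega_{2k_\eps};\R^{3\times3})$ with curl concentrated on $\gamma_\eps$, with $\hat\beta_\eps=\beta_\eps$ on $\Omega_{2k_\eps}\setminus B_{\rho_*}(\gamma_\eps)$ where $\rho_*:=12\rho_\eps/\alpha_\eps$, and with $\|\hat\beta_\eps+\hat\beta_\eps^T\|_{L^{3/2}(\Omega_{2k_\eps})}\le c\|\beta_\eps+\beta_\eps^T\|_{L^{3/2}(\Omega_{\rho_\eps}(\gamma_\eps))}$. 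Since $(\mu_\eps,\beta_\eps)$ are $\rho_\eps$-compatible and $\supp\mu_\eps\subseteq\gamma_\eps$, the final assertion of Proposition~\ref{propextendcylincircles} identifies $\curl\hat\beta_\eps=\mu_\eps$ on $\Omega_{2k_\eps}$.

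\emph{Step 2 (self-energy bound and compactness of measures).} Arguing as in the derivation of \eqref{eqestrthetai}, but summing over the dyadic radial shells $\{2^m\rho_\eps<\dist(\cdot,s_i)<2^{m+1}\rho_\eps\}$ for $0\le m\lesssim\log(k_\eps/\rho_\eps)$ around each segment $s_i$ of $\gamma_\eps$, the circulation identity \eqref{circ-prop} together with Korn's inequality on each shell yields $\eps^2|\theta_\eps|^2\calH^1(s_i)\le c\|\beta_\eps+\beta_\eps^T\|_{L^2(\mathrm{shell})}^2$ for every shell. Because $\log(k_\eps/\rho_\eps)\sim\log(1/\eps)$ by \eqref{eqdefheps} and $\log\rho_\eps/\log\eps\to1$, summing over the $\sim\log(1/\eps)$ shells and over the (boundedly overlapping, by diluteness) segments gives
\[
\eps^2\log\tfrac1\eps\int_{\gamma_\eps}|\theta_\eps|^2\,d\calH^1\le c\int_{\Omega_{\rho_\eps}(\gamma_\eps)}|\beta_\eps+\beta_\eps^T|^2\,dx .
\]
By the energy hypothesis the right-hand side is $\le c\,\eps^2\log(1/\eps)$, so $\int_{\gamma_\eps}|\theta_\eps|^2\,d\calH^1\le C$; since $\theta_\eps\in\calB\setminus\{0\}$ forces $|\theta_\eps|\ge c_0>0$ one gets $|\tfrac1\eps\mu_\eps|(\Omega')=\int_{\gamma_\eps\cap\Omega'}|\theta_\eps|\,d\calH^1\le c\int|\theta_\eps|^2\le C'$ for every $\Omega'\subset\subset\Omega$. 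Weak-$*$ compactness then produces a subsequence with $\tfrac1{\eps_j}\mu_{\eps_j}\weakstarto\mu$ in every $\Omega'\subset\subset\Omega$; divergence-freeness passes to the limit, and the closure of $\calM^1_\calB$ under this convergence (cf.~\cite{ContiGarroniOrtiz2015}) gives $\mu\in\calM^1_\calB(\Omega)$.

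\emph{Step 3 (compactness of strains and conclusion).} Fix a Lipschitz $\Omega''\subset\subset\Omega$ with connected boundary; for $j$ large $\Omega''\subseteq\Omega_{2k_{\eps_j}}$, and Proposition~\ref{proprigiditycras} applied to $\hat\beta_{\eps_j}$ yields $S_j\in\Rskw$ with $\|\hat\beta_{\eps_j}-S_j\|_{L^{3/2}(\Omega'')}\le c(\|\hat\beta_{\eps_j}+\hat\beta_{\eps_j}^T\|_{L^{3/2}(\Omega'')}+|\mu_{\eps_j}|(\Omega''))$. By Step~1 (and Hölder, turning the $L^2$ energy bound into the $L^{3/2}$ norm) the first term is $\le c\,\eps_j(\log(1/\eps_j))^{1/2}$, while by Step~2 the second is $\le c\,\eps_j$; dividing by $\eps_j(\log(1/\eps_j))^{1/2}$ shows $(\hat\beta_{\eps_j}-S_j)/(\eps_j(\log(1/\eps_j))^{1/2})$ is bounded in $L^{3/2}(\Omega'')$. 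Exhausting $\Omega$ by such $\Omega''$, and noting that two admissible skew matrices on a common subdomain differ by at most $c\,\eps_j(\log(1/\eps_j))^{1/2}$, I fix one sequence $S_j$ for which the rescaled field is bounded in $L^{3/2}_\loc(\Omega)$, and extract a further subsequence converging weakly in $L^{3/2}_\loc(\Omega)$ to some $\eta\in L^{3/2}_\loc\subseteq L^1_\loc$. Finally I set $\rho^*_{\eps_j}:=\rho_*=12\rho_{\eps_j}/\alpha_{\eps_j}$, so $\rho^*_{\eps_j}\to0$ and $\log\rho^*_{\eps_j}/\log\eps_j\to1$ by \eqref{eqdefheps}; on $\Omega\setminus B_{\rho^*_{\eps_j}}(\gamma_{\eps_j})$ one has $\beta_{\eps_j}=\hat\beta_{\eps_j}$, and since $|B_{\rho^*_{\eps_j}}(\gamma_{\eps_j})|\le c(\rho^*_{\eps_j})^2\calH^1(\gamma_{\eps_j})\to0$ (Remark~\ref{rem-dilute-gamma}), the contribution of the rescaled field on this shrinking tube tends to $0$ in $L^1_\loc$ by Hölder and the $L^{3/2}$ bound. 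Hence the truncated field in \eqref{eqetaepstheoremlin2} converges weakly in $L^{3/2}_\loc(\Omega)$ to the same $\eta$.

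The principal obstacle is the logarithmic self-energy lower bound of Step~2, which is what transmutes the energy bound into control of the total dislocation mass and must simultaneously exploit the circulation identity \eqref{circ-prop} and the diluteness geometry to sum over radial shells without overcounting; a secondary technical point is the selection in Step~3 of a single skew-matrix sequence $S_j$ valid on all of $\Omega$, achieved through the consistency of the matrices across the exhaustion.
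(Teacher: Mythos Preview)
Your proof is correct and follows the same structural line as the paper: apply the extension of Proposition~\ref{propextendcylincircles} with $q=3/2$, identify $\curl\hat\beta_\eps=\mu_\eps$, and then derive compactness of the measures and of the rescaled strains. The difference is that the paper outsources your Steps~2 and~3 to Propositions~5.2 and~5.4 of \cite{ContiGarroniMarziani} (which, once the extension $\hat\beta_\eps$ with $\curl\hat\beta_\eps=\mu_\eps$ and controlled $L^{3/2}$ symmetric part is in hand, give directly the weak-$*$ compactness of $\eps^{-1}\mu_\eps$ and the weak $L^{3/2}_\loc$ compactness of the rescaled strain), whereas you reproduce those results inline: the dyadic-shell self-energy lower bound in Step~2 is precisely the mechanism behind the cited measure compactness, and the combination of Proposition~\ref{proprigiditycras} with a diagonal exhaustion in Step~3 is the content of the cited strain compactness. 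Your version is therefore more self-contained; the paper's is shorter. One small point to be careful about in Step~2: the phrase ``boundedly overlapping, by diluteness'' hides the fact that near a junction point up to $c\alpha_\eps^{-2}$ segments may meet, so the cylindrical shells of radius $r$ do overlap there; the standard fix is to truncate each shell at distance $\sim r/\sin\alpha_\eps$ from the endpoints, which removes only a fraction $\lesssim r/(\alpha_\eps k_\eps)\ll1$ of the length and makes the shells disjoint.
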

\begin{proof}
Using Proposition~\ref{propextendcylincircles} with $q=3/2$ there are $\rho_\eps^*\to 0$ and $\tilde\beta_\eps\in L^{\frac32}(\Omega_{2k_\eps};\R^{3\times 3})$  with $\tilde\beta_\eps=\beta_\eps$ on $\Omega_{2k_\eps}\setminus B_{\rho_\eps^*}(\gamma_\eps)$, $\curl\tilde\beta_\eps=0$ on $\Omega_{2k_\eps}\setminus \gamma_\eps$, and
\begin{equation}
    \int_{\Omega_{2k_\eps}} 
        |\tilde\contbeta_\eps+\tilde \contbeta_\eps^T|^{\frac32} 
    dx 
    <
    c 
    \int_{\Omega_{\rho_\eps}(\gamma_\eps)}
        |\contbeta_\eps+\contbeta_\eps^T|^{\frac32} 
    dx .
\end{equation}
Moreover, $\curl\tilde \beta_\eps=\mu_\eps$. Using Proposition~5.2 of \cite{ContiGarroniMarziani} on Lipschitz sets $\Omega'\subset\subset \Omega$, and then taking a diagonal subsequence, we obtain \eqref{eqweakconvmuepsmu0}. Similarly, by Proposition~5.4 of \cite{ContiGarroniMarziani} we obtain
\begin{equation}\label{eqetaepstheoremlin2bb}
    \frac{\tilde\beta_{\eps_j}-S_j}{\eps_j(\ln\frac1{\eps_j})^{1/2}}
    \weakto 
    \conteta \text{ weakly in } L^{\frac32}_\loc(\Omega;\R^{3\times 3}).
\end{equation}
Since $|B_{\rho^*_{\eps_j}}(\gamma_{\eps_j})|\to0$, in particular this implies \eqref{eqetaepstheoremlin2}.
\end{proof}

\subsection{The line-tension energy density $\psiC^\rel$}
\label{seccellproblem}
In a continuum setting, to a given distribution of dislocations $\mu$ we associate a line-tension energy which is defined via a cell-problem formula.

Given a Burgers vector $b$ and an orientation $t$, the line-tension energy density of a straight dislocation $\psiC(b,t)$ is defined by solving a one-dimensional problem, see \cite[Section~5]{ContiGarroniOrtiz2015}. The result only depends on $b$, $t$, and the matrix of elastic coefficients $\C$ and corresponds to the classical textbook line-tension energy, see for example \cite[Eq.~(3.87)]{HirthLothe1968}. In particular, the problem 
\begin{equation}
\begin{cases}
    \curl\beta = b\otimes t \calH^1\LL (\R t) \\
    \Div \C\beta=0
\end{cases}
\end{equation}
has a solution in $L^1_\loc(\R^3;\R^{3\times 3})$ which decays as $1/\dist(\cdot, \R t)$ at infinity \cite[Lemma~2.3, Lemma~2.4]{ContiGarroniMarziani}. The solution $\beta_{b,t}$ satisfies
\begin{equation}
    \psiC(b,t)
    =
    \frac{1}{h\ln \frac Rr} 
    \int_{Q_t[(B'_R\setminus B'_r)\times (0,h)]}
    \frac12 \C \beta_{b,t}\cdot\beta_{b,t}\, dx
\end{equation}
for any $0<r<R$ and any $h>0$, where $Q_t\in\SO(3)$ is a rotation such that $Q_te_3=t$.

By \cite[Lemma~5.8]{ContiGarroniOrtiz2015}, for every $M\ge 1$ there is $\omega_M:(0,\infty)\to(0,\infty)$ with $\lim_{r\to0} \omega_M(r)=0$ such that, for every $\beta\in L^1(\R^3;\R^{3\times3})$ with $\curl\beta=b\otimes t \calH^1\LL (\R t)$ and every $Q_t\in \SO(3)$ with $Q_te_3=t$, we have
\begin{equation}\label{eqlemma58}
    \left(1-\frac cM-\omega_M(\frac rR)\right) \psiC(b,t)
    \le
    \frac{1}{h\ln \frac Rr} 
    \int_{Q_t[(B'_R\setminus B'_r)\times (0,h)]} \frac12 \C \beta\cdot\beta dx.
\end{equation}

Explicit examples show that an energy concentrated on lines of the form $\int_\gamma \psiC(b,t)d\calH^1$ may not be lower semicontinuous with respect to weak-* convergence of the dislocation density measure. Formation of microstructure results in the relaxed energy
\begin{equation}
    E^\mathrm{rel}[b\otimes t\calH^1\LL\gamma]
    = 
    \int_\gamma \psiC^\rel(b,t) d\calH^1,
\end{equation}
where the energy density $\psiC^\rel$ is the $\calH^1$-elliptic envelope of $\psiC$, given by \cite{ContiGarroniMassaccesi2015}
\begin{equation}
    \psiC^\rel(b,t)
    :=
    \inf 
    \left\{
        \int_{\gamma\cap B_{1/2}}
            \psiC(\theta,\tau)
        d\calH^1 : 
        \supp(\mu-b\otimes t \calH^1\LL(t\R))
        \subseteq B_{1/2}
    \right\},
\end{equation}
where $\mu=\theta\otimes \tau\calH^1\LL\gamma$ is assumed to be divergence-free.

\section{The $\Gamma$-convergence result} \label{secgammaconvresult}

This section is the core of the paper, in which we derive the continuum limit of our discrete model. The first  result (Theorem~\ref{theomainresultelastic})
is simpler and concerns the elastic scaling in generic dimension $n\ge 2$; the second (Theorem~\ref{theomainresult}) deals with the more complex line-tension energy scaling in three dimensions, its proof covers most of this section and is divided into three propositions.
The compactness statement (Section~\ref{seccomp0}) builds upon the one obtained in  Section~\ref{sectionextensionrigidity} above; the lower bound (Section~\ref{seccomp}) follows from the a classical strategy and uses the continuum comparison of Section~\ref{seclocalcontinuum}. The upper bound (Section~\ref{secdiscreteupperbound}) requires a ad-hoc construction and a careful discretization of the limiting strain fields.

\begin{theorem}\label{theomainresultelastic}
Let $\Omega\subseteq\R^n$ be a bounded connected Lipschitz open set, $n\ge 2$. Let $\calC$, $\calN$ be as in Definition~\ref{deflattice} such that a cover in the sense of Definition~\ref{def-tetrahedra} exists. Let the cluster energy $E_\calC$ be as in Definition~\ref{defcluster}, which in particular obeys \eqref{eqasscoerc}, define $E_\eps$ as in Definition~\ref{defdepsu} and \eqref{eqdeftotalenergy}. Then:
\begin{enumerate}
\item(Compactness) If a sequence of maps $u_\eps:\Omega\cap \eps\calL\to\R^n$, $\eps\to0$, obeys $E_\eps[d_\eps u_\eps, \Omega]\le C\eps^2$, then there is  a subsequence and a function $v\in W^{1,2}(\Omega;\R^{n})$ such that for some matrices $S_k\in \R^{n\times n}_\skw$
\begin{equation}\label{eqconvbetaelastic}
    \frac{Ld_{\eps_k}u_{\eps_k} -S_k}{\eps_k} 
    \weakto 
    Dv \text{ weakly in }L^{2}_\loc(\Omega;\R^{n\times n}).
\end{equation} 
\item (Lower bound) For every $v\in W^{1,2}(\Omega;\R^{n})$ and every sequence $\eps_k\to0$, $u_k:\Omega\cap \eps_k\calL\to\R^n$, $S_k\in\R^{n\times n}_\skw$ converging to $v$ in the sense of \eqref{eqconvbetaelastic}, 
\begin{equation}
    \int_\Omega\frac12 \C  Dv\cdot Dv \,dx
    \le 
    \liminf_{k\to\infty} 
    \frac{1}{\eps_k^2} E_{\eps_k}[d_{\eps_k}u_k,\Omega].
\end{equation}
\item(Upper bound) For every $v\in W^{1,2}(\Omega;\R^n)$ and every sequence $\eps_k\to0$, there is a sequence $u_k:\Omega\cap \eps_k\calL\to\R^n$ converging to $v$ in the sense of \eqref{eqconvbetaelastic} with $S_k=0$ such that
\begin{equation}
   \limsup_{k\to\infty} \frac{1}{\eps_k^2} E_{\eps_k}[d_{\eps_k}u_k,\Omega]
   \le 
   \int_\Omega\frac12 \C  Dv\cdot Dv\, dx.
\end{equation}
\end{enumerate}
\end{theorem}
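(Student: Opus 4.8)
The plan is to exploit that in the purely elastic case $\discbeta=d_\eps u$ is exact on all of $\Omega$ (Lemma~\ref{lemmadiscretepath}), so that $\Ce(d_\eps u,\Omega)=\emptyset$ and, by Remark~\ref{remark-Lcsiexact}, $L d_\eps u=D I_\eps u$ is the gradient of the continuous piecewise-affine interpolant, hence a genuine curl-free $L^2_\loc$ field on which Korn's inequality is available. For compactness, Proposition~\ref{prop-Lcsi} gives, for every Lipschitz $\Omega'\subset\subset\Omega$, $\int_{\Omega'}|\sym D I_\eps u_\eps|^2\le c E_\eps[d_\eps u_\eps,\Omega]\le cC\eps^2$, so $\tfrac1\eps\sym D I_\eps u_\eps$ is bounded in $L^2(\Omega')$. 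Since $D I_\eps u_\eps$ is a gradient, Korn's inequality yields skew matrices $S_\eps$ with $\tfrac1\eps(D I_\eps u_\eps-S_\eps)$ bounded in $L^2(\Omega')$. I would exhaust $\Omega$ by Lipschitz sets $\Omega_1\subset\subset\Omega_2\subset\subset\cdots$, fix $S_k$ optimal for $\Omega_1$, and note that the optimal skew matrices on nested domains differ by $O(\eps_k)$ (both rescaled fields being $L^2$-bounded on the overlap), so the same $S_k$ keeps $\tfrac1{\eps_k}(D I_{\eps_k}u_k-S_k)$ bounded on every $\Omega_j$. A diagonal extraction then produces a weak $L^2_\loc$ limit, which is curl-free as a weak limit of gradients, hence equals $Dv$; the uniform bounds and connectedness of $\Omega$ give $v\in W^{1,2}(\Omega)$, which is \eqref{eqconvbetaelastic}.

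For the lower bound I would apply Theorem~\ref{theomollifydisccont} directly with $\omega=\Omega$, $\omega'=\Omega'\subset\subset\Omega$ and mollification scale $\delta_k:=\eps_k^{1/2}$ (so $\delta_k\to0$, $\eps_k/\delta_k\to0$ and $\delta_k\ge 3k_*\eps_k$ eventually), using $\Ce=\emptyset$. With $F_k:=\psi_{\delta_k}\ast L d_{\eps_k}u_k$ this gives $\int_{\Omega'}\tfrac12\C F_k\cdot F_k\le(1+o(1))E_{\eps_k}$. Because $\C S=0$ for skew $S$ and $\C$ is symmetric (see \eqref{eqpropC}), one has $\tfrac12\C F_k\cdot F_k=\tfrac12\C(F_k-S_k)\cdot(F_k-S_k)$, and $\tfrac1{\eps_k}(F_k-S_k)=\psi_{\delta_k}\ast\tfrac1{\eps_k}(L d_{\eps_k}u_k-S_k)\weakto Dv$ in $L^2_\loc$ (mollifying a bounded, weakly convergent sequence with a vanishing even kernel preserves the weak limit, by a weak--strong test-function argument). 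Since $w\mapsto\int_{\Omega'}\tfrac12\C w\cdot w$ is a nonnegative quadratic form, hence weakly lower semicontinuous, dividing by $\eps_k^2$, letting $k\to\infty$ and then $\Omega'\uparrow\Omega$ yields $\int_\Omega\tfrac12\C Dv\cdot Dv\le\liminf\tfrac1{\eps_k^2}E_{\eps_k}$.

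For the upper bound, a standard density/diagonal argument (the target is continuous in the strong $W^{1,2}$ topology and the $\Gamma$-$\limsup$ is lower semicontinuous) reduces the problem to $v\in C^\infty(\overline\Omega;\R^n)$. I would take $u_k(x):=\eps_k v(x)$ on $\Omega\cap\eps_k\calL$, so that $\tfrac1{\eps_k}I_{\eps_k}u_k\to v$ in $W^{1,2}$ and \eqref{eqconvbetaelastic} holds with $S_k=0$. A Taylor expansion gives $(\tau^x_{\eps_k}d_{\eps_k}u_k)(y,h)=v(x+\eps_k y+\eps_k h)-v(x+\eps_k y)=\eps_k\discbeta^{Dv(x)}(y,h)+O(\eps_k^2)$ uniformly on $\calC_\calN$, whence, since $\EC$ is quadratic and by \eqref{eqdefC}, $\EC[\tau^x_{\eps_k}d_{\eps_k}u_k]=\eps_k^2\,\tfrac12\C Dv(x)\cdot Dv(x)\,\calL^n(T_*)+O(\eps_k^3)$. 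Summing $\eps_k^n\EC[\cdot]$ over $x\in\clust_{\eps_k}^\Omega$ produces a Riemann sum with cell volume $\eps_k^n\calL^n(T_*)$ per lattice point, giving $\tfrac1{\eps_k^2}E_{\eps_k}\to\int_\Omega\tfrac12\C Dv\cdot Dv$, i.e.\ the required (in fact exact) limit.

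The step I expect to be the main obstacle is the compactness, and specifically the production of a single sequence $S_k\in\R^{n\times n}_\skw$ valid on the whole exhaustion together with the assembly of a global $W^{1,2}(\Omega)$ limit: Korn's inequality is intrinsically local and returns domain-dependent infinitesimal rotations, so aligning them across nested subdomains---and discarding the boundary layer of width $\sim\eps_k$ where $L d_\eps u$ is set to zero---is the delicate bookkeeping. By contrast, the lower and upper bounds follow almost immediately from Theorem~\ref{theomollifydisccont} and from the elementary Taylor/Riemann-sum estimate above.
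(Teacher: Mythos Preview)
Your proposal is correct and follows essentially the same route as the paper's proof: compactness via Proposition~\ref{prop-Lcsi} and Korn's inequality on nested subdomains with a diagonal argument, the lower bound via Theorem~\ref{theomollifydisccont} and weak lower semicontinuity of the quadratic form, and the upper bound by the sampling $u_\eps=\eps v$ for smooth $v$ followed by a Taylor/Riemann-sum estimate and density. The only stylistic difference is that in the lower bound the paper keeps $\delta$ fixed, sends $\eps\to0$ first, and then $\delta\to0$, whereas you let $\delta_k=\eps_k^{1/2}$; both variants are valid and yield the same conclusion.
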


\begin{proof}
Compactness: Let $\Omega'\subset\subset\Omega$, connected and Lipschitz. Let $v_\eps:= I_\eps u_\eps:\Omega'\to\R^n$ be the piecewise affine interpolation as in Definition~\ref{definterpolation}. By Remark~\ref{remark-Lcsiexact}, we have $Ld_{\eps}u_{\eps} =Dv_{\eps}$ in $\Omega'$. By Proposition~\ref{prop-Lcsi}, we have, for sufficiently small $\eps$,
\begin{equation}
    \int_{\Omega'}|Dv_\eps+Dv_\eps^T|^2 dx
    \le 
    c  E_\eps[d_\eps u_\eps, \Omega]\le c\, C\eps^2.
\end{equation}
Using Korn's inequality, we obtain that there are matrices $S_k$ and a subsequence such that $(Dv_{\eps_k}-S_k)/\eps_k$ converges to $Dv$ weakly in $L^2(\Omega'; \R^{n\times n})$. With a diagonal argument, it is easily shown that the matrices $S_k$ can be chosen so that they do not depend on $\Omega'$, and similarly for the function $v$.

Lower bound: This follows from Theorem~\ref{theomollifydisccont}. As above, fix $\Omega'\subset\subset\Omega$ and let $v_\eps:=I_\eps u_\eps$. For every $\delta\in (0,\frac15\dist(\Omega',\partial\Omega)]$, by Theorem~\ref{theomollifydisccont} applied to $\xi=d_\eps u_\eps-\xi^{S_\eps}$, for $\eps$ sufficiently small, we have
\begin{equation}
    \frac{1}{\eps^2} 
    \int_{\Omega'} 
        \frac12 \C 
        (\psi_\delta\ast Dv_\eps-S_\eps)
        \cdot 
        (\psi_\delta\ast Dv_\eps-S_\eps) 
    dx
    \le 
    \frac{1}{\eps^2}(1+c\frac\eps\delta)
    E_\eps[d_\eps u_\eps,\Omega].
\end{equation}
Taking the limit of $\eps\to0$,
\begin{equation}
    \int_{\Omega'} 
        \frac12 \C 
        (\psi_\delta\ast Dv)
        \cdot 
        (\psi_\delta\ast Dv) 
    dx 
    \le 
    \liminf_{\eps\to0} 
    \frac{1}{\eps^2} E_\eps[d_\eps u_\eps,\Omega].
\end{equation}
Taking the limit $\delta\to0$,
\begin{equation}
    \int_{\Omega'} 
        \frac12 \C Dv\cdot Dv 
    \,dx
    \le 
    \liminf_{\eps\to0}  
    \frac{1}{\eps^2} E_\eps[d_\eps u_\eps,\Omega].
\end{equation}
Since the bound holds for all $\Omega'\subset\subset\Omega$, the proof is concluded.

Upper bound: Assume that $v\in C^2(\overline\Omega;\R^n)$. Define $u_\eps(x):=\eps v(x)$ for all $x\in \Omega\cap\eps\calL$. For every $x\in \clust_\eps^\Omega$ and every $(y,h)\in \calC_\calN$, we have that
\begin{equation}\begin{split}
    (\tau^x_\eps  d_\eps u_\eps)(y,h)
    =
    d_\eps u_\eps(x+\eps y,h)
    = &
    \frac{u_\eps(x+\eps y+\eps h)-u_\eps(x+\eps y)}{\eps}
    \\ = &
    {v(x+\eps y+\eps h)-v(x+\eps y)} ,
\end{split}\end{equation}
so that
\begin{equation}
    |(\tau^x_\eps  d_\eps u_\eps)(y,h)-\eps Dv(x)h|
    \le 
    c \eps^2 \|D^2v\|_{L^\infty(\Omega)} ,
\end{equation}
for all $(y,h)$. As $\EC$ is a quadratic form, for every $\sigma\in (0,1]$,
\begin{equation}\begin{split}
    \EC[\tau^x_\eps d_\eps u_\eps] 
    \le &
    (1+\sigma) \EC[\xi^{\eps Dv(x)}] 
    + 
    \frac{c}{\sigma}\eps^4 \|D^2v\|_{L^\infty(\Omega)}^2,
\end{split}\end{equation}
and, with \eqref{eqdefC}, we obtain
\begin{equation}\begin{split}
    \EC[\tau^x_\eps d_\eps u_\eps] 
    \le &
    (1+\sigma) \eps^2\frac12 \C Dv(x)\cdot Dv(x) \calL^n(T_*)
    +
    C_\sigma \eps^4 \|D^2v\|_{L^\infty(\Omega)}^2.
\end{split}\end{equation}
Summing over all $x\in \clust^\Omega_\eps$, we obtain
\begin{equation*}
\begin{split}
    E_\eps[d_\eps u_\eps,\Omega]
    \le & 
    (1+\sigma) \eps^2 
    \sum_{x\in \clust_\eps^\Omega} 
        \eps^n  \frac12 \C Dv(x)\cdot Dv(x) \calL^n(T_*)
    +
    C_\sigma \eps^4 
    \sum_{x\in \clust_\eps^\Omega} 
        \eps^n \|D^2v\|_{L^\infty(\Omega)}^2.
\end{split}
\end{equation*}
Using that $|Dv(x)-Dv(z)|\le c\eps\|D^2v\|_{L^\infty(\Omega)}$ for all $z\in x+\eps T_*$ and
\begin{equation}
    \C Dv(x)\cdot Dv(x)
    \le
    (1+\sigma)
    \C Dv(z)\cdot Dv(z)
    +
    \frac c\sigma |Dv(x)-Dv(z)|^2,
\end{equation}
this becomes
\begin{equation}\begin{split}
    E_\eps[d_\eps u_\eps,\Omega]
    \le & 
    (1+\sigma)^2 \eps^2  
    \int_\Omega \frac12  \C Dv\cdot Dv \, dx
    +
    C_\sigma' |\Omega| \eps^4 \|D^2v\|_{L^\infty}^2 .
\end{split}\end{equation}
We divide by $\eps^2$, take the limit $\eps\to0$, and recall that $\sigma$ is arbitrary, to conclude the proof of the upper bound for $v\in C^2(\overline\Omega; \R^n)$. By density the general case follows.
\end{proof}

After these prelimiaries, we study the situation with dislocations, and obtain convergence to
\begin{equation}
    E_0[\conteta,b\otimes t \calH^1\LL\gamma,\Omega]
    :=
    \int_\Omega\frac 12\C \conteta\cdot \conteta \,dx 
    + 
    \int_{\gamma\cap\Omega} \psiC^\rel(b,t)d\calH^1.
\end{equation}
\begin{theorem}\label{theomainresult}
Let $\Omega\subseteq\R^3$ be a bounded Lipschitz open set with $\partial\Omega$ connected. Let $\calC$, $\calN$ be as in Definition~\ref{deflattice} such that a cover in the sense of Definition~\ref{def-tetrahedra} exists. Let the cluster energy $E_\calC$ be as in Definition~\ref{defcluster}, which in particular obeys~\eqref{eqasscoerc}, and define $E_\eps$ as in Definition~\ref{defdepsu} and ~\eqref{eqdeftotalenergy}. Given a complete set of slip systems, in the sense of Definition~\ref{defslipsys} and Definition~\ref{defbetapl2}, and $k_\eps$, $\alpha_\eps$ which obey \eqref{eqdefheps}, $m\ge k_*$, let $\Ade$ be as in \eqref{eq-Ade-short}. Then, the following holds:
\begin{enumerate}
\item(Compactness) If $\discbeta_\eps\in\Ade(\Omega)$ and $E_\eps[\discbeta_\eps, \Omega]\le C\eps^2\log\frac1\eps$, then there is a subsequence and a pair $(\conteta,\mu)\in L^{2}(\Omega;\R^{3\times 3})\times \calM_\calB^1(\Omega)$ such that $\curl\conteta=0$ and, for some matrices $S_k\in \R^{3\times 3}_\skw$,
\begin{equation}\label{eqconvbeta}
    \frac{L\discbeta_{\eps_k} -S_k}{\eps_k\log^{1/2}\frac1{\eps_k}} 
    \weakto 
    \conteta \text{ weakly in }L^{p}_\loc(\Omega;\R^{3\times 3})
\end{equation} 
for every $p\in[1,3/2)$, and
\begin{equation}\label{eqconvmu}
    \frac1{\eps_k} \curl L\discbeta_{\eps_k} 
    \weakto 
    \mu \text{ as distributions in $(C_c^\infty(\Omega;\R^{3\times 3}))^*$}.
\end{equation}
\item (Lower bound) For every $(\conteta,\mu)\in L^{2}(\Omega;\R^{3\times 3})\times \calM_\calB^1(\Omega)$ with $\curl\conteta=0$ and every  $\eps_k\to0$, $\discbeta_k\in \Adek(\Omega)$, $S_k\in\R^{3\times 3}_\skw$ converging to $(\conteta,\mu)$ in the sense of \eqref{eqconvbeta} and \eqref{eqconvmu}, 
\begin{equation}
    E_0[\conteta,\mu,\Omega]
    \le 
    \liminf_{k\to\infty} 
    \frac{1}{\eps_k^2\log\frac1{\eps_k}} 
    E_{\eps_k}[\discbeta_k,\Omega].
\end{equation}

\item(Upper bound) For every $(\conteta,\mu)\in L^{2}(\Omega;\R^{3\times 3})\times  \calM_\calB^1(\Omega)$ with $\curl\eta=0$ and every sequence $\eps_k\to0$, there is a sequence $\discbeta_k\in \Adek(\Omega)$ converging to $(\conteta,\mu)$ in the sense of \eqref{eqconvbeta} and \eqref{eqconvmu} with $S_k=0$, such that
\begin{equation}
    \limsup_{k\to\infty} 
    \frac{1}{\eps_k^2\log\frac1{\eps_k}} 
    E_{\eps_k}[\discbeta_k,\Omega]
    \le 
    E_0[\conteta,\mu,\Omega].
\end{equation}
\end{enumerate}
\end{theorem}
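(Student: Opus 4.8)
The plan is to prove the three parts by reducing, wherever possible, to the continuum machinery of Sections~\ref{seclocalcontinuum} and~\ref{sectionextensionrigidity}, and to concentrate the genuinely discrete work in the upper bound.

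\emph{Compactness.} For each $\eps$ I would first invoke Proposition~\ref{propmufrombeta} to produce a measure $\mu_\eps\in\calM_{\eps\calB}^1(\Omega')$ with $(\mu_\eps,L\discbeta_\eps)$ being $m\eps$-compatible on every $\Omega'\subset\subset\Omega$, so that by \eqref{circ-prop} the circulation of $L\discbeta_\eps$ away from $\gamma_\eps$ is governed by $\mu_\eps$. By Proposition~\ref{prop-Lcsi} the bound $E_\eps\le C\eps^2\log\frac1\eps$ yields an $L^2$ control of $L\discbeta_\eps+(L\discbeta_\eps)^T$ on $\Omega'$, hence on $\Omega_{\rho_\eps}(\gamma_\eps)$ with $\rho_\eps=m\eps$; since $m$ is fixed one has $\frac{\ln\rho_\eps}{\ln\eps}\to1$, so the hypotheses of Theorem~\ref{theo-compact-cont} are met and that theorem supplies $\mu$, $\conteta$, the matrices $S_k$, and the weak $L^{3/2}_\loc$ convergence of the \emph{truncated} rescaled strain. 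To remove the truncation $\chi_{\Omega\setminus B_{\rho^*_\eps}(\gamma_\eps)}$ I would use that $|B_{\rho^*_\eps}(\gamma_\eps)|\to0$ together with the uniform $L^{3/2}_\loc$ bound coming from the incompatible-Korn estimate underlying Theorem~\ref{theo-compact-cont}: by H\"older the truncated and untruncated sequences differ by something vanishing strongly in every $L^p_\loc$, $p<\tfrac32$, which gives \eqref{eqconvbeta}; testing $\curl$ against $\varphi\in C_c^\infty(\Omega;\R^{3\times3})$ and using the same vanishing-measure argument identifies $\frac1\eps\curl L\discbeta_{\eps_k}$ with the weak-$*$ limit $\frac1{\eps}\mu_\eps\weakstarto\mu$, yielding \eqref{eqconvmu}. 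Finally $\curl\conteta=0$ since the curl of the rescaled field equals $\frac{\mu_\eps}{\eps\log^{1/2}\frac1\eps}\to0$, and $\conteta\in L^2(\Omega)$ follows from Korn applied to a curl-free field whose symmetric part is bounded in $L^2$.

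\emph{Lower bound.} Here I would use the classical two-scale splitting. Away from the core $\discbeta_\eps$ is exact, so Theorem~\ref{theomollifydisccont}, applied on $\Omega'\setminus B_R(\gamma_\eps)$ with a mollification scale $\delta\gg\eps$, bounds the elastic part of $E_\eps$ from below by $\int\frac12\C F_\delta\cdot F_\delta$; after dividing by $\eps^2\log\frac1\eps$, using \eqref{eqconvbeta} and weak lower semicontinuity of the convex functional $A\mapsto\int\frac12\C A\cdot A$, this contributes $\int_\Omega\frac12\C\conteta\cdot\conteta$ as $R\to0$. For the singular part I would localize around each segment of $\gamma_\eps$: $m\eps$-compatibility fixes the circulation of $L\discbeta_\eps$, so the cell-problem estimate \eqref{eqlemma58}, applied on annular cylinders of radii $\sim\eps$ and $R$, produces a factor $\ln\frac{R}{c\eps}\sim\log\frac1\eps$ times $\psiC$ of the local Burgers vector and tangent; dividing by $\log\frac1\eps$ recovers the full line tension for each fixed $R$, and relaxing through the $\calH^1$-elliptic envelope of \cite{ContiGarroniMassaccesi2015} gives $\int_{\gamma\cap\Omega}\psiC^\rel(b,t)d\calH^1$. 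Because the cross term between the singular and the compatible (gradient) parts is lower order, since $\Div\C\beta_\mu=0$, and because the pure core $\{\dist<c\eps\}$ carries nonnegative energy, letting first $\eps\to0$ and then $R\to0$ makes the two contributions add; this is exactly the continuum $\Gamma$-liminf of \cite{ContiGarroniMarziani}, now transported to the discrete energy through Theorem~\ref{theomollifydisccont}.

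\emph{Upper bound.} This is the part I expect to be the main obstacle, and it requires an explicit construction. Using the relaxation formula for $\psiC^\rel$ and density of dilute polyhedral measures in $\calM_\calB^1(\Omega)$, I would first reduce to $\mu$ supported on finitely many segments with constant Burgers vector and $\conteta$ smooth, recovering the envelope $\psiC^\rel$ by inserting oscillating dislocation microstructure at an intermediate scale, so that it suffices to treat a single straight segment. For such a segment I would take the cell-problem field $\beta_{b,t}$ of Section~\ref{seccellproblem}, superpose the smooth background carrying $\conteta$, and then \emph{discretize}: the completeness of the slip systems (Definition~\ref{defbetapl2}) is precisely what lets me choose integers so that $\discxi_\ell(x,h)\,m_\ell\cdot h\in\Z$ and the discrete plastic strain $\discbetapl$ reproduces the prescribed jump across the slip planes threaded by the segment, after which a displacement $u$ is chosen with $d_\eps u-\discbetapl$ approximating the continuum strain on the bonds. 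The energy estimate then exploits the quadratic structure and the consistency relation \eqref{eqdefC}, $\frac1{\calL^n(T_*)}\EC[\discbeta^A]=\frac12\C A\cdot A$, to match $E_\eps$ with the continuum energy away from the core up to errors $o(\eps^2\log\frac1\eps)$, while the core contributes only $O(\eps^2)$ because the cell field is cut off at radius $\sim\eps$. The delicate points, and the reason this is the hard step, are (i) arranging the plastic discretization so that $\Ce(\discbeta)\subseteq B_{m\eps}(\gamma)$ and $(\gamma,\discbeta)\in\Ade(\Omega)$, and (ii) controlling the discretization error uniformly so that the logarithmically divergent leading term is reproduced exactly while the remainder stays lower order.
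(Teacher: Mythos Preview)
Your overall strategy matches the paper's: compactness via the extension machinery of Section~\ref{sectionextensionrigidity}, lower bound via Theorem~\ref{theomollifydisccont} combined with the cell-problem estimate~\eqref{eqlemma58} on cylinders, and upper bound via density plus explicit discretization exploiting completeness of the slip systems. The paper's proof of Theorem~\ref{theomainresult} simply invokes Propositions~\ref{prop:compact}, \ref{prop:lower}, \ref{prop:upperbound}, which implement exactly these three blocks.

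There is, however, one genuine gap in your compactness argument. To pass from the truncated convergence of Theorem~\ref{theo-compact-cont} to the untruncated~\eqref{eqconvbeta}, and likewise to identify $\tfrac1\eps\curl L\discbeta_\eps$ with $\tfrac1\eps\mu_\eps$ distributionally, you need an $L^{3/2}$ bound on $L\discbeta_\eps$ over the \emph{full} set $\Omega'$, including the core region $B_{\rho^*_\eps}(\gamma_\eps)$. The rigidity estimate underlying Theorem~\ref{theo-compact-cont} is applied to the \emph{extended} field $\hat\beta_\eps$, which differs from $L\discbeta_\eps$ precisely inside the core, so that bound says nothing about $L\discbeta_\eps$ there. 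The paper closes this by applying Proposition~\ref{proprigiditycras} a second time, now directly to $L\discbeta_\eps$, using Lemma~\ref{lemmacoremollif} to control $|\curl L\discbeta_\eps|(\Omega')\le c\,m\,\calH^1(\gamma_\eps)^{1/2}E_\eps^{1/2}$; together with Remark~\ref{rem-dilute-gamma} this yields $\|L\discbeta_\eps-\hat S_k\|_{L^{3/2}(\Omega')}\le c\,\eps^q$ for every $q<1$ (see \eqref{eq-Lxi-compact1}--\eqref{eq-Lxi-compact2}), after which your H\"older step and the $L^1$ comparison in~\eqref{eq-conv-L1} go through. Without this ingredient both the removal of the truncation and the identification~\eqref{eqconvmu} are incomplete.

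Two minor remarks. In the lower bound your cross-term comment is unnecessary: since you split the \emph{domain} into $\Omega\setminus B_R(\gamma_\eps)$ and $B_R(\gamma_\eps)$, the contributions are additive and no orthogonality via $\Div\C\beta_\mu=0$ is used (the paper does not use it either). In the upper bound you cannot literally reduce to a single segment, because a single segment does not carry a divergence-free measure; the paper treats a general polygonal $\mu$ at once, building slip surfaces $\Sigma_i^j=\gamma_i+[0,\infty)\hat b_j$ for all segments and all generators $\hat b_j$ of $\calB$ simultaneously.
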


\begin{proof}
The theorem follows from Proposition~\ref{prop:compact}, Proposition~\ref{prop:lower} and Proposition~\ref{prop:upperbound} below.
\end{proof}

\subsection{Compactness}\label{seccomp0}
We next prove compactness of sequences with bounded energy. The symmetric part of $L\discbeta_\eps$ is controlled by the energy via~\eqref{eqLbetasymEcint}, whereas for $\curl L\discbeta_\eps$ we only have a weaker control via~\eqref{eqcurlLbetatetr}. In order to use the rigidity result for incompatible fields in Proposition~\ref{proprigiditycras}, it is necessary to extend $L\discbeta_\eps$ to the core region, starting from the curl-free values of $L\discbeta_\eps$ outside of the core, obtaining a field $\beta_\eps$, with $\curl\beta_\eps$ concentrated on lines. The extension (Proposition~\ref{propextendcylincircles}) and the rigidity estimate (Proposition~\ref{proprigiditycras}) hold in $L^{3/2}$, whence an $L^{3/2}$ estimate for $L\discbeta_\eps$ follows outside of the core. To obtain convergence in distributions, it remains to exclude the core region. This is the step from \eqref{eqpropcompt3} to \eqref{eqpropcompt4}, done in~\eqref{eq-Lxi-compact2}, and leads to weak convergence of the strains in $L^p_\loc$ for $p$ strictly less than $3/2$.

\begin{proposition}\label{prop:compact}
\sloppypar Under the assumptions of Theorem~\ref{theomainresult},  let $(\gamma_\eps,\discbeta_\eps)\in \Ade(\Omega,k_\eps,\alpha_\eps,m)$ be such that $E_\eps[\discbeta_\eps,\Omega]\le E \eps^2\log\frac1\eps$  for some $E\in\R$. Then there are $\rho^*_\eps\to 0$ with
\begin{equation}\label{eqrhostarepseps}
    \lim_{\eps\to0}\frac{\ln \rho^*_\eps}{\ln\eps}=1
\end{equation}
and a sequence $\eps_k\to0$ such that:
\begin{enumerate}
\item
There is $\mu\in \calM^1_\calB(\Omega)$ satisfying
\begin{equation}\label{eqpropcompt2}
    \frac1{\eps_k}  \curl L\discbeta_{\eps_k} 
    \weakto 
    \mu \text{ as distributions in $(C_c^\infty(\Omega;\R^{3\times 3}))^*$}.
\end{equation}
\item There are measures $\mu_{\eps_k}\in \calM^1_{\eps_k\calB}(\Omega)$ satisfying
\begin{equation}\label{eqpropcompt1b}
	\frac1{\eps_k}  \mu_{\eps_k}
    \weakto 
    \mu \text{ weakly as measures in $(C_c^0(\Omega;\R^{3\times 3}))^*$},
\end{equation}
for every $\Omega'\subset\subset\Omega$ and $k$ large enough the pairs $(\mu_{\eps_k},L\xi_{\eps_k})$ are $\rho_{\eps_k}^*$\!-compatible in $\Omega'$ and $\Omega'\cap \supp\mu_{\eps_k}\subseteq\gamma_{\eps_k}$.

\item There are $S_k\in \R^{3\times 3}_\skw$ and $\conteta\in L^{\frac32}_{\rm loc}(\Omega;\R^{3\times 3})$ with $\curl\conteta=0$ satisfying
\begin{equation}\label{eqpropcompt3}
    \frac{L\discbeta_{\eps_k}-S_k}
    {\eps_k(\log {\frac1{\eps_k}})^{1/2}}
    \chi_{\Omega\setminus B_{\rho^*_{\eps_k}}(\gamma_{\eps_k})}
    \weakto 
    \conteta \text{ weakly in }
    L^{\frac32}_\loc(\Omega;\R^{3\times 3})
\end{equation}
and, for any $p\in[1,\frac32)$,
\begin{equation}\label{eqpropcompt4}
    \frac{L\discbeta_{\eps_k}-S_k}
    {\eps_k(\log {\frac1{\eps_k}})^{1/2}}
    \weakto 
    \conteta \text{ weakly in } L^{p}_\loc(\Omega;\R^{3\times 3}).
\end{equation}
\end{enumerate}
\end{proposition}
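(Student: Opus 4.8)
The plan is to reduce everything to the continuum compactness Theorem~\ref{theo-compact-cont}, fed with the interpolated strain $\beta_\eps:=L\discbeta_\eps$ and the dislocation measure produced by Proposition~\ref{propmufrombeta}, and then to carry out by hand the two passages the theorem does not deliver directly: the removal of the characteristic function (\eqref{eqpropcompt3}$\Rightarrow$\eqref{eqpropcompt4}) and the identification of the distributional limit of $\frac1\eps\curl L\discbeta_\eps$ with $\mu$ (\eqref{eqpropcompt2}). First I fix a Lipschitz set $\Omega'\subset\subset\Omega$ with connected boundary and check the hypotheses of Theorem~\ref{theo-compact-cont} on $\Omega'$. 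By Proposition~\ref{prop-Lcsi}, \eqref{eqLbetasymEcint} and the energy bound, $\int_{\Omega'}|L\discbeta_\eps+(L\discbeta_\eps)^T|^2\,dx\le cE_\eps[\discbeta_\eps,\Omega]\le cE\,\eps^2\log\tfrac1\eps$, so the symmetrized field satisfies the required $\eps^2\log\tfrac1\eps$ bound. Proposition~\ref{propmufrombeta} furnishes $\mu_\eps\in\calM_\calB^1(\Omega')$ with $\supp\mu_\eps\subseteq\gamma_\eps$ (of the form $\eps\theta_\eps\otimes t_\eps\calH^1\LL\gamma_\eps$) such that $(\mu_\eps,L\discbeta_\eps)$ is $m\eps$-compatible in $\Omega'$; with $\rho_\eps:=m\eps$ one has $\frac{\ln(m\eps)}{\ln\eps}\to1$.

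Applying Theorem~\ref{theo-compact-cont} with $\Omega$ replaced by $\Omega'$, and diagonalizing over an exhaustion of $\Omega$ by such sets, yields a subsequence $\eps_k$, a measure $\mu\in\calM_\calB^1(\Omega)$, radii $\rho^*_{\eps_k}\to0$ obeying \eqref{eqrhostarepseps}, skew matrices $S_k$, and a field $\conteta\in L^{3/2}_\loc$ for which \eqref{eqweakconvmuepsmu0} is exactly \eqref{eqpropcompt1b} and \eqref{eqetaepstheoremlin2} is exactly \eqref{eqpropcompt3} (recall $\beta_\eps=L\discbeta_\eps$). This settles part (ii): the compatibility of $(\mu_{\eps_k},L\discbeta_{\eps_k})$ is inherited because $\rho^*_\eps\ge m\eps$, so $m\eps$-compatibility implies $\rho^*_\eps$-compatibility. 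Moreover $\curl\conteta=0$: denoting by $\tilde\beta_\eps$ the extension of Proposition~\ref{propextendcylincircles} with $\curl\tilde\beta_\eps=\mu_\eps$, the rescaled fields in \eqref{eqpropcompt3} have curl $\tfrac{1}{\log^{1/2}(1/\eps)}\cdot\tfrac1\eps\mu_\eps\weakstarto0$, and the curl is distributionally continuous.

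The two remaining statements both rest on one $L^{3/2}$ bound for $L\discbeta_\eps$ valid up to and including the core; \emph{this estimate is the crux of the proof}. I apply the rigidity estimate for almost-compatible fields (Proposition~\ref{proprigiditycras}) to $L\discbeta_\eps$ on $\Omega'$: there is $S\in\Rskw$ with
\[
\|L\discbeta_\eps-S\|_{L^{3/2}(\Omega')}\le c\Big(\|L\discbeta_\eps+(L\discbeta_\eps)^T\|_{L^{3/2}(\Omega')}+|\curl L\discbeta_\eps|(\Omega')\Big).
\]
The first term is $\le c\,\eps\log^{1/2}\tfrac1\eps$ by Hölder and \eqref{eqLbetasymEcint}; the second is $\le c(\calH^1(\gamma_\eps))^{1/2}\eps\log^{1/2}\tfrac1\eps$ by Lemma~\ref{lemmacoremollif} and the energy bound. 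Comparing $S$ with $S_k$ on the bulk $\Omega'\setminus B_{\rho^*_\eps}(\gamma_\eps)$, where $L\discbeta_\eps=\tilde\beta_\eps$ and both are already controlled (so $|S-S_k|\to0$), gives
\[
\|L\discbeta_\eps-S_k\|_{L^{3/2}(\Omega')}\le c\big(1+(\calH^1(\gamma_\eps))^{1/2}\big)\,\eps\log^{1/2}\tfrac1\eps .
\]
By Remark~\ref{rem-dilute-gamma} and \eqref{eq-H1-gamma}, $\calH^1(\gamma_\eps)=\eps^{-o(1)}$, so the prefactor is sub-polynomial in $1/\eps$; and since $\rho^*_\eps=\eps^{1+o(1)}$ one has $|B_{\rho^*_\eps}(\gamma_\eps)|\le c(\rho^*_\eps)^2\calH^1(\gamma_\eps)=\eps^{2-o(1)}\to0$. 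The delicate point is precisely that the length $\calH^1(\gamma_\eps)$ is only sub-polynomially bounded, so every estimate below must produce a genuine \emph{polynomial} gain in $\eps$ to absorb it.

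Finally I conclude both statements by Hölder on the small core. For \eqref{eqpropcompt4} it suffices, by \eqref{eqpropcompt3}, to show $\frac{L\discbeta_\eps-S_k}{\eps\log^{1/2}(1/\eps)}\chi_{B_{\rho^*_\eps}(\gamma_\eps)}\to0$ in $L^p_\loc$ for $p<3/2$; Hölder on $B_{\rho^*_\eps}(\gamma_\eps)$ with the bound above gives
\[
\frac{\|L\discbeta_\eps-S_k\|_{L^{p}(B_{\rho^*_\eps}(\gamma_\eps))}}{\eps\log^{1/2}(1/\eps)}\le c\,\eps^{-o(1)}|B_{\rho^*_\eps}(\gamma_\eps)|^{\frac1p-\frac23}\le c\,\eps^{2(\frac1p-\frac23)-o(1)}\to0,
\]
since $\frac1p-\frac23>0$ for $p<3/2$. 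For \eqref{eqpropcompt2} I combine $\frac1\eps\mu_{\eps}\weakstarto\mu$ with $\curl L\discbeta_\eps-\mu_\eps=\curl w_\eps$, where $w_\eps:=L\discbeta_\eps-\tilde\beta_\eps$ vanishes outside $B_{\rho^*_\eps}(\gamma_\eps)$ by Proposition~\ref{propextendcylincircles}; for $\varphi\in C_c^\infty(\Omega';\R^{3\times 3})$, using $\|w_\eps\|_{L^{3/2}(\Omega')}\le c\,\eps^{-o(1)}\eps\log^{1/2}\tfrac1\eps$ (previous paragraph and Theorem~\ref{theo-compact-cont}) and $|B_{\rho^*_\eps}(\gamma_\eps)|^{1/3}=\eps^{2/3-o(1)}$,
\[
\Big|\tfrac1\eps\langle\curl w_\eps,\varphi\rangle\Big|\le\tfrac1\eps\|w_\eps\|_{L^1(B_{\rho^*_\eps}(\gamma_\eps))}\|D\varphi\|_{L^\infty}\le c\,\eps^{-o(1)}\log^{1/2}\tfrac1\eps\cdot\eps^{2/3-o(1)}\to0.
\]
Hence $\frac1\eps\curl L\discbeta_{\eps_k}\weakto\mu$, which is \eqref{eqpropcompt2}, and the proof is complete.
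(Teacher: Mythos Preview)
Your argument is correct and follows the same route as the paper's proof: reduce to the continuum compactness (the paper unpacks Proposition~\ref{propextendcylincircles} plus the \cite{ContiGarroniMarziani} compactness directly, whereas you invoke the wrapper Theorem~\ref{theo-compact-cont}), then apply Proposition~\ref{proprigiditycras} to $L\xi_\eps$ itself combined with Lemma~\ref{lemmacoremollif} to control the core, and finish with H\"older on $B_{\rho^*_\eps}(\gamma_\eps)$ exactly as in \eqref{eq-Lxi-compact2}--\eqref{eq-conv-L1}.

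Two small imprecisions are worth tightening. First, ``$|S-S_k|\to0$'' is not what you need and not what you use: comparing the two $L^{3/2}$ bounds on the bulk (where $L\xi_\eps=\tilde\beta_\eps$) actually gives $|S-S_k|\le c\,\eps^{1-o(1)}$, which is the quantitative input feeding into your displayed estimate. Second, your justification of $\curl\eta=0$ is not quite right as written: the fields in \eqref{eqpropcompt3} carry the characteristic function $\chi_{\Omega\setminus B_{\rho^*_\eps}(\gamma_\eps)}$, whose curl picks up a surface term on $\partial B_{\rho^*_\eps}(\gamma_\eps)$ and is \emph{not} $\frac{1}{\eps\log^{1/2}(1/\eps)}\mu_\eps$. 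The clean argument (and what the paper does) is to observe that the rescaled extensions $(\tilde\beta_\eps-S_k)/(\eps\log^{1/2}\tfrac1\eps)$ are bounded in $L^{3/2}(\Omega')$, agree with the fields of \eqref{eqpropcompt3} outside a vanishing set, hence share the weak limit $\eta$; their curl is exactly $\frac{1}{\log^{1/2}(1/\eps)}\cdot\frac1\eps\mu_\eps\weakstarto0$, whence $\curl\eta=0$.
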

\begin{proof}
From Proposition \ref{prop-Lcsi}, we have that
\begin{equation}\label{eqbdcpen}
    \limsup_{\eps\to0}\frac{1}{\eps^2\ln\frac1\eps} 
    \int_{\Omega_{d_{\calC}\eps}} |L\xi_\eps+(L\xi_\eps)^T|^2 dx 
    \leq 
    c\,E<\infty,
\end{equation} 
and from Lemma~\ref{lemmaelemenmtarycontinuouscirc}\ref{lemmaelemenmtarycontinuouscirccurl} and $\Ce(\xi_\eps)\subseteq B_{m\eps}(\gamma_\eps)$ we know that $\Curl L\xi_\eps=0$ in $\Omega_{2k_*\eps}\setminus \overline B_{m\eps}(\gamma_\eps)$. From Proposition~\ref{propextendcylincircles} with $q=\frac32$ and $\rho_\eps:=2m\eps\ge 2k_*\eps$, we obtain that for $\eps$ sufficiently small, setting $r_\eps:=2k_\eps\to 0$ and $\rho_\eps^*:=12\rho_\eps/\alpha_\eps\to 0$, there is an extension $\hat\beta_\eps\in L^{\frac32}(\Omega_{r_\eps};\R^{3\times 3})$ with $\hat\beta_\eps=L\xi_\eps$ on $\Omega_{r_\eps}\setminus B_{\rho_\eps^*}(\gamma_\eps)$, $\curl\hat\beta_\eps=0$ on $\Omega_{r_\eps} \setminus \gamma_\eps$,
\begin{equation}\label{eq-estimate-compact-ext}
    \int_{\Omega_{r_\eps}} 
        |\hat\contbeta_\eps+\hat \contbeta_\eps^T|^{\frac32} 
    dx 
    <
    c 
    \int_{\Omega_{\rho_\eps}(\gamma_\eps)} 
        |L\xi_\eps+(L\xi_\eps)^T|^{\frac32}
    dx
\end{equation}
and
\begin{equation}\label{eqenergycompt}
    \int_{\Omega_{r_\eps}\setminus B_{\rho_\eps^*}(\supp\mu_\eps)}
        |\hat\contbeta_\eps+\hat \contbeta_\eps^T|^2 
    dx 
    <
    c 
    \int_{\Omega_{\rho_\eps}(\gamma_\eps)} 
        |L\xi_\eps+(L\xi_\eps)^T|^2
    dx ,
\end{equation}
where $\mu_\eps:=\curl\hat \beta_\eps\in\calM^1(\Omega_{r_\eps})$. By the assumption on $\alpha_\eps$, we see that \eqref{eqrhostarepseps} holds. We  observe that $\supp\mu_\eps\subseteq\gamma_\eps$ and that $\mu_\eps$ and $L\xi_\eps$ are $\rho_\eps^*$-compatible in $\Omega_{r_\eps}$. By Lemma \ref{lemmaelemenmtarycontinuouscirc}\ref{lemmaelementarycirc}, we have that $\mu_\eps\in \calM^1_{\eps\calB}(\Omega_{r_\eps})$. By \eqref{eqenergycompt} and \eqref{eqbdcpen} we have
\begin{equation}
    \frac{1}{\eps^2\ln \frac{1}{\eps}}
    \int_{\Omega_{r_\eps}\setminus B_{\rho_\eps^*}(\supp\mu_\eps)} 
        | \hat\contbeta_\eps+\hat \contbeta_\eps^T|^2 
    dx 
    \le 
        c\,E,
\end{equation}
and using Proposition~5.2 of \cite{ContiGarroniMarziani} (which gives the compactness for the continuum problem with core regularization, and is a variant of Proposition~6.6 of \cite{ContiGarroniOrtiz2015})
on any connected Lipschitz set $\Omega'\subset\subset\Omega$,
and then taking a further diagonal subsequence, we obtain that there is a subsequence such that
\begin{equation}\label{eqpropcompt1}
    \frac1{\eps_k}  \mu_{\eps_k}
    \weakto 
    \mu \text{ weakly as measures in $(C_c^0(\Omega;\R^{3\times 3}))^*$}
\end{equation}
for some $\mu\in \calM^1_\calB(\Omega)$. This proves~\eqref{eqpropcompt1b}.

We fix a connected Lipschitz set $\Omega'\subset\subset\Omega$ and apply the rigidity estimate in Proposition \ref{proprigiditycras} to the field $\hat\beta_{\eps_k}$, for $k$ sufficiently large. Using~\eqref{eqbdcpen} and~\eqref{eq-estimate-compact-ext}, we have that there exists a sequence of skew symmetric matrices $S_k$ such that
\begin{equation}\label{eq-compact-beta-hat}
\begin{split}
    \|L\discbeta_{\eps_k}-S_{k}\|_{L^{3/2}(\Omega'\setminus B_{\rho^*_{\eps_k}}(\gamma_{\eps_k})) }
    \leq &
    \| \hat\beta_{\eps_k}- S_{k}\|_{L^{3/2}(\Omega') }
    \\ \leq & 
    c E \eps_k\sqrt{\log\frac{1}{\eps_k}}
    +
    c|\mu_{\eps_k}|(\Omega'),
\end{split}
\end{equation}
where $c$ may depend on $\Omega'$. Recalling \eqref{eqpropcompt1}, this implies that the sequences $\frac{L\discbeta_{\eps_k}-S_k} {\eps_k(\log {\frac1{\eps_k}})^{1/2}}\chi_{\Omega'\setminus B_{\rho^*_{\eps_k}}(\gamma_{\eps_k})}$ and $\frac{\hat\beta_{\eps_k}-S_k} {\eps_k(\log {\frac1{\eps_k}})^{1/2}}$ are bounded in $L^\frac32(\Omega';\R^{3\times 3})$, and they agree outside $B_{\rho^*_{\eps_k}}(\gamma_{\eps_k})$. Note that, for every $s\in(0,1)$,
\begin{equation}\label{eq-rho-star}
 	\rho_\eps^*=\frac{24m\eps}{\alpha_\eps} \leq c \eps^s
\end{equation}
for $\eps$ small enough. From~\eqref{eq-rho-star} and Remark~\ref{rem-dilute-gamma}, we have that
\begin{equation}\label{eq-limit-gamma-eps}
    |B_{\rho^*_{\eps_k}}(\gamma_{\eps_k})|
    \leq 
    (\rho_{\eps_k}^*)^2\calH^1(\gamma_{\eps_k})
    \le 
    c\eps_k^{2s}\calH^1(\gamma_{\eps_k})\to 0
    \hskip3mm\text{ for any $s\in (0,1)$}.
\end{equation}
Therefore, we can extract a subsequence so that they both converge locally weakly to the same $\eta\in L^{\frac{3}{2}}_\loc(\Omega;\R^{3\times 3})$. This proves~\eqref{eqpropcompt3} and, by linearity,
\begin{equation}
    \curl \frac1{\eps_k(\ln \frac1{\eps_k})^{1/2}} \hat\contbeta_{\eps_k} 
    \weakto
    \curl \conteta \text{ distributionally}.
\end{equation}
Since $\mu_{\eps_k}=\frac1{\eps_k}\Curl\hat\contbeta_{\eps_k} \weakto\mu$ we obtain $\curl\conteta=0$.

In order to obtain \eqref{eqpropcompt4}, we directly apply Proposition \ref{proprigiditycras} to the field $L\xi_{\eps_k}$ in $\Omega'$. Using again Proposition~\ref{prop-Lcsi} and Lemma~\ref{lemmacoremollif} for $\curl L\xi_\eps$, we find that, for any open set $\Omega''$ with $\Omega'\subset\subset\Omega''\subset\subset\Omega$,
\begin{equation}\label{eq-Lxi-compact1}
\begin{split}
    \|L\discbeta_{\eps_k}-\hat S_{k}\|_{L^{3/2}( \Omega') }
    \le & 
    c E_{\eps_k}^{1/2}[\discbeta_{\eps_k},\Omega] 
    + 
    c|\curl L\discbeta_{\eps_k}|(\Omega')
    \\ \le & 
    c E_{\eps_k}^{1/2}[\discbeta_{\eps_k},\Omega] 
    +
    c m (\calH^1(\gamma_{\eps_k}\cap \Omega''))^{1/2}  E_{\eps_k}^{1/2}[\discbeta_{\eps_k},\Omega] ,
\end{split} 
\end{equation}
for some skew symmetric matrices $\hat S_k$ and $k$ sufficiently large. Therefore, using again Remark~\ref{rem-dilute-gamma} to estimate the length of $\gamma_{\eps_k}$,
\begin{equation}\label{eq-Lxi-compact1b}
\begin{split}
    \limsup_k 
    \frac{1}{\eps_k^q}
    \|L\discbeta_{\eps_k}-\hat S_{k}\|_{L^{3/2}( \Omega') }
    <
    \infty \text{ for every $q\in (0,1)$.}
\end{split}
\end{equation}
Combining this with \eqref{eq-compact-beta-hat}, we infer that for every such $q$ the sequence $|S_k-\hat S_k|/\eps_k^q$ is bounded and, therefore, \eqref{eq-Lxi-compact1b} also holds with $\hat S_k=S_k$.

By H\"older's inequality, for every $p\in [1,\frac32)$ we have
\begin{equation}\label{eq-Lxi-compact2}
\begin{split}
    \|L\discbeta_{\eps_k}-S_{k}\|_{L^{p}( \Omega'\cap B_{\rho^*_{\eps_k}}(\gamma_{\eps_k})) }
    \le  
    |B_{\rho^*_{\eps_k}}(\gamma_{\eps_k})|^{\frac1p-\frac{2}3}	\|L\discbeta_{\eps_k}- S_{k}\|_{L^{3/2}( \Omega') }.
\end{split} 
\end{equation}
Similarly to~\eqref{eq-limit-gamma-eps} we obtain $|B_{\rho^*_{\eps_k}} (\gamma_{\eps_k})|/\eps^{1-q}\to 0$ for every $q\in(0,1)$, and, with~\eqref{eq-Lxi-compact1b}, we deduce that
\begin{equation}\label{eqLxiS}
    \lim_{k\to \infty}
    \frac{1}{\eps_k(\log {\frac1{\eps_k}})^{1/2}}	\|L\discbeta_{\eps_k}-S_{k}\|_{L^{p}( \Omega'\cap B_{\rho^*_{\eps_k}}(\gamma_{\eps_k})) } 
    =
    0,
\end{equation}
which, together with~\eqref{eqpropcompt3}, concludes the proof of \eqref{eqpropcompt4}.

In order to show \eqref{eqpropcompt2} it is therefore enough to show that $\frac1{\eps_k}(\hat\contbeta_{\eps_k}-L\xi_{\eps_k})$ converges strongly to zero in $L^1_\loc(\Omega)$. We have,
\begin{equation}\label{eq-conv-L1}
\begin{split}
    \frac{1}{\eps_k}\|\hat\contbeta_{\eps_k}-L\xi_{\eps_k}\|_{L^1(\Omega')}
    & =	
    \frac{1}{\eps_k}
    \|\hat\contbeta_{\eps_k}-L\xi_{\eps_k}\|_{L^1(\Omega'\cap B_{\rho^*_{\eps_k}}(\gamma_{\eps_k}))}
    \\ & \leq 
    \frac{1}{\eps_k} 
    |B_{\rho^*_{\eps_k}}(\gamma_{\eps_k}))|^\frac13 \|\hat\contbeta_{\eps_k}-L\xi_{\eps_k}\|_{L^\frac32(\Omega')}.
\end{split}
\end{equation} Recalling \eqref{eq-compact-beta-hat}, ~\eqref{eqLxiS} and
$|B_{\rho^*_{\eps_k}} (\gamma_{\eps_k})|/\eps^{q}\to 0$ for every $q\in(0,1)$, concludes the proof.
\end{proof}

\subsection{Lower bound}\label{seccomp}

We now show the lower bound in the $\Gamma$-convergence result.

\begin{proposition}\label{prop:lower}
Under the assumptions of Theorem~\ref{theomainresult}, let $\discbeta_\eps\in \Ade(\Omega)$ be such that there is a measure $\mu\in \calM^1_\calB(\Omega)$ which satisfies
\begin{equation}\label{eqpropcompt2b}
    \frac1{\eps}  \curl L\discbeta_{\eps} 
    \weakto 
    \mu \text{ as distributions in $(C_c^\infty(\Omega;\R^{3\times 3}))^*$}
\end{equation}
and  
$S_\eps\in \R^{3\times 3}_\skw$, $\conteta\in L^{1}_\loc(\Omega;\R^{3\times 3})$ with $\curl\conteta=0$, such that
\begin{equation}\label{eqpropcompt3b}
    \frac{L\discbeta_{\eps}-S_\eps}
    {\eps(\log {\frac1{\eps}})^{1/2}}
    \weakto 
    \conteta \text{ weakly in } L^{1}_\loc(\Omega;\R^{3\times 3}).
\end{equation}
Then, $\conteta\in L^2(\Omega;\R^{3\times 3})$ and 
\begin{equation}\label{eqlowerboundprop}
    E_0[\mu,\eta,\Omega] 
    \le 
    \liminf_{\eps\to 0} 
    \frac{1}{\eps^2\ln \frac1{\eps}} 
    E_{\eps}[\discbeta_\eps,\Omega].
\end{equation}
The same holds if all objects are only defined along a sequence $\eps_k\to0$.
\end{proposition}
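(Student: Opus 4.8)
The plan is to reduce the discrete energy to the elastic energy of a continuum field that is compatible with a dislocation measure away from the core, and then to invoke the continuum line-tension lower bound of \cite{ContiGarroniMarziani}. Fix connected Lipschitz sets $\Omega'\subset\subset\Omega''\subset\subset\Omega$. By \eqref{eqdefheps} the scale $\delta_\eps:=\eps/\alpha_\eps$ obeys $\delta_\eps\to0$, $\eps/\delta_\eps=\alpha_\eps\to0$ and $\ln\delta_\eps/\ln\eps\to1$; set $t_\eps:=m\eps+5\delta_\eps$, so that also $\ln t_\eps/\ln\eps\to1$. Let $\mu_\eps\in\calM^1_{\eps\calB}(\Omega')$ be the compatible measure associated with $\discbeta_\eps$ by Proposition~\ref{propmufrombeta}; combining the hypothesis \eqref{eqpropcompt2b} with the $L^1_\loc$-closeness of the extension and $L\discbeta_\eps$ established in the proof of Proposition~\ref{prop:compact} gives $\frac1\eps\mu_\eps\weakto\mu$.

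I then bridge from the discrete to the continuum setting. Since $\Ce(\discbeta_\eps)\subseteq B_{m\eps}(\gamma_\eps)$, the open set $\omega:=\Omega''\setminus\overline{B_{m\eps}(\gamma_\eps)}$ has $\Ce(\discbeta_\eps,\omega)=\emptyset$, so Theorem~\ref{theomollifydisccont} applies on $\omega$ at scale $\delta_\eps$. Writing $\contbeta_\eps:=\psi_{\delta_\eps}\ast L\discbeta_\eps$, we obtain for $\eps$ small
\begin{equation*}
    \int_{\Omega'\setminus B_{t_\eps}(\gamma_\eps)}
        \tfrac12\,\C\contbeta_\eps\cdot\contbeta_\eps\,dx
    \le
    (1+c\,\alpha_\eps)\,E_\eps[\discbeta_\eps,\Omega].
\end{equation*}
The decisive point is the choice of scale: taking $\delta_\eps$ only marginally above $\eps$ keeps the compatibility radius $t_\eps$ at $\eps^{1+o(1)}$, so that the entire logarithmic self-energy stored between the lattice scale and the macroscopic scale is retained by $\contbeta_\eps$; a coarser mollification would smear out part of the core annulus and forfeit a corresponding fraction of the line-tension energy.

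Next I verify that $\contbeta_\eps$ carries the correct compatibility and convergence. Mollification commutes with $\curl$, so $\contbeta_\eps$ is curl-free on $\Omega'\setminus B_{t_\eps}(\gamma_\eps)$, and its circulation around any loop in this set coincides with that of $L\discbeta_\eps$, since for $|z|\le\delta_\eps/2$ the translated loop encircles the same portion of $\gamma_\eps$. Applying Proposition~\ref{propextendcylincircles} to $\contbeta_\eps$ with $q=3/2$ (admissible for small $\eps$ by \eqref{eqdefheps}) and using its uniqueness statement, the curl of the resulting extension equals $\mu_\eps$, so that $(\mu_\eps,\contbeta_\eps)$ is $t_\eps$-compatible in $\Omega'$ in the sense of Definition~\ref{defbetamucompatible}. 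Because $\delta_\eps\to0$, mollification preserves weak limits, so \eqref{eqpropcompt3b} yields $(\contbeta_\eps-S_\eps)/(\eps(\ln\tfrac1\eps)^{1/2})\weakto\conteta$, while $|B_{t_\eps}(\gamma_\eps)|\to0$ renders the excised tube immaterial for this limit.

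Finally, the triple $(\mu_\eps,\contbeta_\eps)$ satisfies the hypotheses of the continuum $\Gamma$-liminf inequality of \cite{ContiGarroniMarziani}, whose line-tension density is precisely the $\calH^1$-elliptic envelope $\psiC^\rel$ of Section~\ref{seccellproblem}, built from the cell-problem bound \eqref{eqlemma58}. That result gives
\begin{equation*}
    E_0[\mu,\conteta,\Omega']
    \le
    \liminf_{\eps\to0}\frac{1}{\eps^2\ln\frac1\eps}
    \int_{\Omega'\setminus B_{t_\eps}(\gamma_\eps)}\tfrac12\,\C\contbeta_\eps\cdot\contbeta_\eps\,dx
    \le
    \liminf_{\eps\to0}\frac{1}{\eps^2\ln\frac1\eps}E_\eps[\discbeta_\eps,\Omega],
\end{equation*}
and letting $\Omega'\uparrow\Omega$ proves \eqref{eqlowerboundprop}. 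The integrability $\conteta\in L^2(\Omega)$ is obtained separately: \eqref{eqLbetasymEcint} bounds $\|L\discbeta_\eps+(L\discbeta_\eps)^T\|_{L^2(\Omega')}^2$ by $c\,E_\eps[\discbeta_\eps,\Omega]$, so the symmetric part of $\conteta$ lies in $L^2$ by weak lower semicontinuity, and since $\curl\conteta=0$, Korn's inequality upgrades this to $\conteta\in L^2(\Omega)$. I expect the main obstacle to be exactly this discrete-to-continuum reduction at the correct scale: reconciling $\eps/\delta_\eps\to0$ (needed for Theorem~\ref{theomollifydisccont} to be asymptotically sharp) with $\ln\delta_\eps/\ln\eps\to1$ (needed so that no logarithmic line-tension energy is lost), which is made possible only by the slow decay of $\alpha_\eps$ encoded in \eqref{eqdefheps}.
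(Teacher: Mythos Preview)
Your approach differs from the paper's: you attempt to reduce to the continuum $\Gamma$-liminf of \cite{ContiGarroniMarziani}, while the paper does a direct local estimate, splitting the domain into a far-field region (where weak convergence and lower semicontinuity give the elastic term) and core cylinders around segments of $\gamma_\eps$ (where the cell-problem bound \eqref{eqlemma58} gives the line-tension term).

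There is a genuine gap in your reduction, and it is precisely the one the paper flags. Your application of Theorem~\ref{theomollifydisccont} yields
\[
    \int_{\Omega'\setminus B_{t_\eps}(\gamma_\eps)}\tfrac12\,\C\contbeta_\eps\cdot\contbeta_\eps\,dx
    \le (1+o(1))\,E_\eps[\discbeta_\eps,\Omega],
\]
but the continuum lower bound of \cite{ContiGarroniMarziani} is phrased in terms of the energy on $\Omega'\setminus B_\rho(\supp\mu_\eps)$, not $\Omega'\setminus B_{t_\eps}(\gamma_\eps)$. Since $\supp\mu_\eps$ may be a \emph{strict} subset of $\gamma_\eps$ (some segments can carry zero Burgers vector), the set $\Omega'\setminus B_{t_\eps}(\gamma_\eps)$ is in general strictly smaller than $\Omega'\setminus B_{t_\eps}(\supp\mu_\eps)$, so your energy bound is on too small a domain to feed into the continuum result. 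Your invocation of Proposition~\ref{propextendcylincircles} only establishes $t_\eps$-compatibility of $(\mu_\eps,\contbeta_\eps)$; it does not extend the energy control into the tubes around $\gamma_\eps\setminus\supp\mu_\eps$.

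The paper notes that this difficulty \emph{can} be overcome via the extension results, and indeed \eqref{eq-ext-gamma0} in Proposition~\ref{propextendcylincircles} is the missing ingredient: it gives an $L^2$ bound for the extended field on $\Omega_{r}\setminus B_{\rho_*}(\supp\hat\mu)$, covering the extra region. To repair your argument you would need to replace $\contbeta_\eps$ by its extension $\hat\contbeta_\eps$ on the tubes where $\mu_\eps$ vanishes, check that the additional energy thus introduced is controlled by $E_\eps[\discbeta_\eps,\Omega]$ via \eqref{eq-ext-gamma0} and \eqref{eqLbetasymEcint}, and only then apply the continuum liminf on $\Omega'\setminus B_{t_\eps}(\supp\mu_\eps)$. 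The paper instead bypasses this by splitting at the intermediate scale $R_\eps=(\alpha_\eps k_\eps)^2$ and estimating the core contribution segment by segment using \eqref{eqlemma58}, which is more self-contained.
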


\begin{proof}
Without loss of generality we can assume that, up to a subsequence (not relabeled), 
\begin{equation}
    \lim_{\eps\to 0} 
    \frac{1}{\eps^2\ln \frac1{\eps}}
    E_\eps[\discbeta_\eps, \Omega]
    =
    \liminf_{\eps\to 0} 
    \frac{1}{\eps^2\ln \frac1{\eps}}
    E_\eps[\discbeta_\eps, \Omega]
    <
    \infty.
\end{equation}
Let $\gamma_\eps$ be curves such that $(\gamma_\eps,\discbeta_\eps)\in \Ade(\Omega,k_\eps,\alpha_\eps,m)$. From Proposition~\ref{prop:compact}, we obtain measures $\mu_\eps$ which converge weakly$^*$ to $\mu$ in the sense that
\begin{equation}\label{eqweakconvmuepsmu}
    \frac1\eps\mu_\eps
    \weakstarto
    \mu \hskip5mm
    \text{ weakly as measures in $(C_c^0(\Omega;\R^{3\times 3}))^*$}.
\end{equation}
Fix connected Lipschitz sets $\Omega'\subset\subset\Omega''\subset\subset\Omega$. Since $\supp\mu_\eps\cap\Omega''\subseteq\gamma_\eps$, which is $(k_\eps, \alpha_\eps)$-dilute, we may assume that
\begin{equation}\label{eq-misura-diluita-lower}
    \mu_{\eps}\LL\Omega''
    = 
    \sum_i b^i_\eps\otimes t_\eps^i 
    {\cal H}^1\LL(\gamma^i_\eps\cap\Omega''),
\end{equation}
where $b_\eps^i\in \eps\calB$ and $\gamma^i_\eps\subseteq\Omega$ are finitely many segments with $\calH^1(\gamma_\eps^i)\in[k_\eps,2k_\eps)$.

Let $\rho_\eps^*$ be as in \eqref{eq-rho-star} and define, for $\eps$ sufficiently small,
\begin{equation}\label{eqtildebetapsiastLbeta}
    \contbeta_\eps := 
    \psi_{\rho_\eps^*}\ast L\discbeta_\eps 
    \hskip1cm \text{ in } \Omega'',
\end{equation}
where $\psi_\delta\in C^\infty_c(B_{\delta/2})$ is the mollifier introduced in~\eqref{eqpsideltamoll}. We use Theorem \ref{theomollifydisccont} with $\delta= \rho^*_\eps/10$, $\omega=\Omega\setminus B_{m\eps}(\gamma_\eps)$ and $\omega'=\Omega''\setminus B_{\rho_\eps^*}(\gamma_\eps)$. Since $\Ce(\xi_\eps)\subseteq B_{m\eps}(\gamma_\eps)$ and $m\eps\le\rho_\eps^*/2$, for $\eps$ sufficiently small we obtain
\begin{equation}\label{eqestbetaepsM2}
    \int_{\Omega''\setminus B_{\rho_\eps^*}(\gamma_\eps)} 
        \frac12 \C \contbeta_\eps \cdot \contbeta_\eps\, 
    dx 
    \le 
    (1+c\frac{\eps}{\rho^*_\eps}) 
    E_\eps[\discbeta_\eps, \Omega].
\end{equation}

We remark that we cannot conclude using the $\Gamma$-convergence results for continuum fields from \cite{ContiGarroniOrtiz2015,ContiGarroniMarziani}. Indeed, the above formula only estimates the energy of $\beta_\eps$ away from a neighbourhood of $\gamma_\eps$. In contrast, the continuum results do not contain $\gamma_\eps$ explicitly, and assume compatibility of $\beta_\eps$ away from $\supp\mu_\eps$. We have shown above that $\mu_\eps$ is supported in a neighbourhood of $\gamma_\eps$ and it can be redefined to be supported on a subset of $\gamma_\eps$, but this could be a strict subset (cf. Proposition~\ref{propextendcylincircles} and comments below). This difficulty can be overcome by using the extension results. However, it is more transparent to use a direct approach and do a local estimate using diluteness.

From \eqref{eqpropcompt3b}, by the properties of mollifiers we obtain 
\begin{equation}\label{eqweakbetaconvmuepsmu}
    \frac{\beta_{\eps}-S_\eps}
    {\eps(\log {\frac1{\eps}})^{1/2}}
    \weakto 
    \conteta \text{ weakly in } L^{1}(\Omega';\R^{3\times 3})
\end{equation}
and by equiintegrability we obtain, setting $R_\eps:=(\alpha_\eps k_\eps)^2$, 
\begin{equation*}
\begin{split}
    \frac{\contbeta_{\eps}-S_{\eps}}{\eps(\log\frac1{{\eps}})^{1/2}} (1-\chi_{B_{R_\eps}(\gamma_\eps)})
    \weakto 
    \conteta \text{ weakly in } L^{1}(\Omega';\R^{3\times 3}).
\end{split}
\end{equation*}
Since $\C S_\eps=0$,
\begin{equation}\label{eqlbelast}
\begin{split}
    \int_{\Omega'} \frac12  \C\conteta \cdot \conteta\,  dx
    \le &
    \liminf_{\eps\to0} 
    \int_{\Omega'\setminus B_{R_\eps}(\gamma_\eps)}
        \frac{\C 
    	(\contbeta_{\eps}-S_{\eps})\cdot (\contbeta_{\eps}-S_{\eps})}
        {2{\eps^2}\log\frac1{{\eps}}} 
    dx
    \\ = & 
    \liminf_{\eps\to0} 
    \frac{1}{\eps^2\log\frac1{{\eps}}}
    \int_{\Omega''\setminus B_{R_\eps}(\gamma_\eps)} 
        \frac12 \C \contbeta_\eps \cdot \contbeta_\eps 
    dx,
\end{split}
\end{equation}
which will then be estimated with \eqref{eqestbetaepsM2}.

The core part is treated as in \cite[Prop.~6.6]{ContiGarroniOrtiz2015}. We denote by $I_\eps(\Omega')$ the set of indices $i$ in \eqref{eq-misura-diluita-lower} such that the segment $\gamma_\eps^i$ intersects $\Omega'$. If $\eps$ is small enough, we have that $B_{R_\eps}(\gamma_\eps^i)\subseteq\Omega''$ for all $i\in I_\eps(\Omega')$. We choose $\lambda_\eps:=\alpha_{\eps} k_{\eps}$ and let $\Gamma_\eps^i\subseteq\R$ be a segment of length $\calH^1(\gamma_\eps^i) - 2\lambda_\eps$, $A_\eps^i$ an affine isometry that maps $\Gamma_\eps^ie_3$ into $\gamma_\eps^i$ and the midpoint of $\Gamma_\eps^ie_3$ to the midpoint of $\gamma_\eps^i$. We define the cylinders
\begin{equation*}
    \cyl^i_\eps
    :=
    A_\eps^i(B'_{R_\eps}\times \Gamma_\eps^i)
    \text{ and }
    \hat \cyl^i_\eps:=A_\eps^i(B'_{\rho^*_\eps}\times \Gamma_\eps^i) \,.
\end{equation*}
By diluteness, the sets $\{\cyl_{\eps}^i\}_i$ are pairwise disjoint and $\cyl^i_\eps\cap \gamma^j_\eps=\emptyset$ for all $j\ne i$. Fix $M>1$, chosen below. For sufficiently small $\eps$, we have $\calH^1(\Gamma^i_\eps)\ge MR_\eps$ for all $i$. By Lemma~\ref{lemmaextendbetacyl}, for every $i$ there is an extension $\beta^i\in L^1(T^i_\eps;\R^{3\times 3})$ of $\beta_\eps|_{T^i_\eps\setminus \hat T^i_\eps}$ with $\curl\beta^i=b^i_\eps\otimes t^i_\eps \calH^1\LL A^i_\eps (\Gamma^i_\eps e_3)$. Using \eqref{eqlemma58} in each of them,
\begin{equation*}
\begin{split}
    &	
    \int_{B_{R_\eps}(\gamma_\eps)\cap\Omega''} 
        \frac12 \C\contbeta_{\eps}\cdot\contbeta_{\eps} 
    dx
    \\ & \geq 
    \sum_{i\in I_\eps(\Omega')}
    \calH^1(\Gamma_\eps^i) 
    \ln\frac{R_\eps}{\rho^*_\eps}
    \left(
        1-\frac{c}{M} 
        -  
        \omega_{M}
        \left(\frac{\rho^*_\eps}{R_\eps}\right)
    \right) \psi_\C(b_\eps^i, t_\eps^i)\,.
\end{split}
\end{equation*}
Recalling that $\calH^1(\Gamma_\eps^i)\ge (1-2\alpha_{\eps}) \calH^1(\gamma_\eps^i)$, we have\begin{equation*}
\begin{split}
    & 	
    \int_{B_{R_\eps}(\gamma_\eps)\cap\Omega''} 
    \frac12 \C\contbeta_{\eps}\cdot\contbeta_{\eps} dx
    \\ & \ge  
    \ln\frac{R_\eps}{\rho^*_\eps} 
    \sum_i 
    \calH^1(\gamma_\eps^i) (1-2\alpha_{\eps}) 
    \left( 
        1-\frac{c}{M}
        -  
        \omega_{M}\left(\frac{\rho^*_\eps}{R_\eps}\right)
    \right) 
    \psi_\C(b_\eps^i,  t_\eps^i)  
    \\ & \ge 
    \ln\frac{R_\eps}{\rho^*_\eps}
    \left(
        1-\frac{c}{M} 
        -  
        \omega_{M}\left(\frac{\rho^*_\eps}{R_\eps}\right)
    \right) 
    (1-2\alpha_{\eps})
    F_0[\mu_{\eps},\Omega'] ,
\end{split}
\end{equation*}
where
\begin{equation}
    F_0[b\otimes t\calH^1\LL\gamma,\Omega]
    :=
    \int_{\gamma\cap\Omega} \psi_\C(b,t)d\calH^1.
\end{equation}
Combining with \eqref{eqlbelast}, taking the limit as $\eps\to0$ and using that $\psiC(\cdot, t)$ is positively 2-homogeneous, \eqref{eqdefheps} and~\eqref{eqrhostarepseps}, gives
\begin{alignat*}1
 	\liminf_{\eps\to0}
 	\frac{1}{\eps^2\ln\frac1\eps}
 	\int_{\Omega''} 
 	\frac12\C\contbeta_{\eps}\cdot \contbeta_{\eps} dx    
 	\ge
 	(1-\frac{c}{M})\liminf_{\eps\to0} F_0[\frac1\eps\mu_{\eps},\Omega']
 	+ 
 \int_{\Omega'} \frac12 \C\conteta \cdot\conteta dx.
\end{alignat*}
Using that $\int_{\gamma\cap\Omega} \psi_\C^\rel(b,t)d\calH^1$ is the lower semicontinuous envelope of $F_0$, the convergence in \eqref{eqweakconvmuepsmu} and that $M$ is arbitrary, the lower bound is proven.

Since $\curl\eta=0$, from $\eta+\eta^T\in L^2(\Omega;\R^{3\times 3})$ and Korn's inequality we obtain $\eta\in L^2(\Omega;\R^{3\times 3})$.
\end{proof}
 
\subsection{Discrete upper bound}
\label{secdiscreteupperbound}

The upper bound is based on discretization of the strain field associated to a polyhedral distribution of dislocations $\mu$. As shown in \cite[Theorem~4.1]{ContiGarroniOrtiz2015}, given a bounded measure  $\mu\in\calM(\R^3;\R^{3\times 3})$ with $\Div\mu=0$, the solution $\contbeta\in L^{3/2}(\R^3;\R^{3\times 3})$ of the problem
\begin{equation}\label{eqdivbmur3}
    \Div \C\contbeta=0 \text{ and } \Curl \contbeta=\mu
\end{equation}
(denoted by $\beta_\mu$ elsewhere) is unique, satisfies
\begin{equation*}
    \|\contbeta\|_{L^{3/2}(\R^3)} \le c |\mu|(\R^3)\,,
\end{equation*}
and has the integral representation 
\begin{equation}\label{eqbetaNmu}
    \contbeta_{ij}(x)
    =
    \sum_{k,l=1}^3\int_{\R^3} N_{ijkl}(x-y) d\mu_{kl}(y)
\end{equation}
(for $x\not\in\supp\mu$).
Here, $N\in C^\infty(\R^3\setminus\{0\};\R^{3\times 3\times 3\times 3})$ depends only on $\C$ and is positively $(-2)$-homogeneous. If $\mu=\sum_i b_i \otimes t_i\calH^1\LL\gamma_i$ for finitely many compact segments $\gamma_i$, then from this representation shows that $\beta\in C^\infty(\R^3\setminus \cup_i \gamma_i;\R^{3\times 3})$ and
\begin{equation}\label{eqDcontbetadist1}
    |\contbeta(x)|\le c \sum_i \frac{|b_i|}{\dist(x, \gamma_i)}.
\end{equation}
Moreover (see \cite[Prop.~6.1]{ContiGarroniMarziani}), if $\Omega\subseteq\R^3$ is an open set with $\mu(\partial\Omega)=0$, then
\begin{equation}\label{eqpwcontlim}
    \lim_{\eps\to0} \frac{1}{\ln \frac1\eps}
    \int_{\Omega\setminus B_\eps(\gamma)} 
        \frac12 \C \contbeta\cdot  \contbeta 
    \, dx  
    = 
    \int_{\Omega\cap\gamma}  \psiC(b,t) d\calH^1.
\end{equation} 
For the discretization procedure we need a decay estimate for the gradient of $\beta$, which is derived from this representation in the following Lemma.
\begin{lemma}
If $\mu=\sum_i b_i \otimes t_i\calH^1\LL\gamma_i$ for finitely many segments $\gamma_i$, then the function $\beta$ in \eqref{eqbetaNmu} satisfies
\begin{equation}\label{eqDcontbetadist2}
    |D\contbeta(x)|\le c \sum_i \frac{|b_i|}{\dist^2(x, \gamma_i)}\,.
\end{equation}
\end{lemma}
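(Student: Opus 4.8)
The plan is to differentiate the representation formula \eqref{eqbetaNmu} directly and exploit the homogeneity of the kernel. Since $\beta\in C^\infty(\R^3\setminus\cup_i\gamma_i;\R^{3\times3})$ and each $\gamma_i$ is a compact segment, for fixed $x\notin\cup_i\gamma_i$ the map $y\mapsto N(x-y)$ is smooth in a neighbourhood of $\supp\mu$, so one may differentiate under the integral sign to obtain $D\beta(x)=\sum_i\int_{\gamma_i} DN(x-y)[b_i\otimes t_i]\,d\calH^1(y)$. Because $N$ is positively $(-2)$-homogeneous and smooth on $\R^3\setminus\{0\}$, its gradient $DN$ is positively $(-3)$-homogeneous and smooth there; hence $|DN(z)|\le c_0|z|^{-3}$ for all $z\ne0$, with $c_0:=\max_{|z|=1}|DN(z)|$. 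Since $|b_i\otimes t_i|=|b_i|$ (as $t_i\in S^2$), this yields the pointwise bound $|D\beta(x)|\le c\sum_i|b_i|\int_{\gamma_i}|x-y|^{-3}\,d\calH^1(y)$.

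It then remains to estimate, for each segment, the scalar line integral $\int_{\gamma_i}|x-y|^{-3}\,d\calH^1(y)$ by $c/\dist^2(x,\gamma_i)$. I would parametrise $\gamma_i$ by arclength, $y=y_i+s t_i$ with $s$ ranging over a finite interval, and write $|x-y|^2=a^2+(s-s_*)^2$, where $a:=\dist(x,L_i)$ for $L_i$ the line containing $\gamma_i$ and $s_*$ the parameter of the orthogonal projection of $x$ onto $L_i$. Extending the integration to all of $\R$ and using $\int_\R(a^2+u^2)^{-3/2}\,du=2/a^2$ already gives the bound $2/\dist^2(x,L_i)$ in the case where the projection falls inside $\gamma_i$, in which case $\dist(x,L_i)=\dist(x,\gamma_i)$. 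When the projection falls outside $\gamma_i$, the distance to $\gamma_i$ is attained at an endpoint and $\dist^2(x,\gamma_i)=a^2+s_*^2$; integrating $(a^2+u^2)^{-3/2}$ only over the half-line $u\ge|s_*|$ and rationalising $\sqrt{a^2+s_*^2}-|s_*|=a^2/(\sqrt{a^2+s_*^2}+|s_*|)$ bounds the integral by $1/(a^2+s_*^2)=1/\dist^2(x,\gamma_i)$. In either case the line integral is at most $2/\dist^2(x,\gamma_i)$, and summing over $i$ yields \eqref{eqDcontbetadist2}.

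The argument is essentially routine once differentiation under the integral is justified; the only point requiring a little care is the second, geometric step, where the naive pointwise bound $|x-y|\ge\dist(x,\gamma_i)$ is too crude, since it would produce the non-scale-invariant factor $\calH^1(\gamma_i)/\dist^3(x,\gamma_i)$ rather than the correct $(-2)$ power. One must instead integrate the kernel along the segment and use the case distinction above to recover the sharp decay. No analytic input beyond the $(-3)$-homogeneity of $DN$ and the smoothness of $\beta$ off $\supp\mu$, both already established, is needed.
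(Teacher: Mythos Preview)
Your proof is correct and follows essentially the same route as the paper: differentiate \eqref{eqbetaNmu} under the integral sign, use the $(-3)$-homogeneity of $DN$ to get $|D\beta(x)|\le c\sum_i|b_i|\int_{\gamma_i}|x-y|^{-3}\,d\calH^1(y)$, and then bound the line integral by $c/\dist^2(x,\gamma_i)$. The only cosmetic difference is in this last step: the paper observes in one line that, if $s$ denotes arclength measured from the nearest point of $\gamma_i$ to $x$, then $|x-y|^2\ge \dist^2(x,\gamma_i)+s^2$ (this holds both when the projection falls inside $\gamma_i$ and when it falls at an endpoint, since then the cross term has a favourable sign), whence $\int_{\gamma_i}|x-y|^{-3}\,d\calH^1\le\int_\R(\dist^2(x,\gamma_i)+s^2)^{-3/2}\,ds$; your explicit case distinction and rationalisation arrive at the same bound.
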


\begin{proof}
We observe that
\begin{equation}
    D\beta(x)
    =
    \sum_i \int_{\gamma_i} DN(x-y)b_i\otimes t_i d\calH^1(y)
\end{equation}
and that $DN({z})=|z|^{-3}DN(z/|z|)$, so that
\begin{equation}
    |D\beta|(x)
    \le 
    \|DN\|_{L^\infty(\partial B_1)}
    \sum_i \int_{\gamma_i} \frac{|b_i|}{|x-y|^3} d\calH^1(y).
\end{equation}
We estimate each term in the sum by
\begin{equation*}
\begin{split}
    \int_{\gamma_i} \frac{|b_i|}{|x-y|^3} d\calH^1(y)
    & \le 
    \int_{\R} \frac{|b_i|} {(\dist^2(x,\gamma_i)+s^2)^{3/2}} ds
    \\ & \le 
    \frac{|b_i|}{\dist^2(x,\gamma_i)} \int_\R \frac{1}{(1+s^2)^{3/2}} ds ,
\end{split}
\end{equation*}
and obtain \eqref{eqDcontbetadist2}.
\end{proof}

\begin{proposition}\label{prop:upperbound}
Under the assumptions of Theorem~\ref{theomainresult}, let $\mu=b\otimes t \calH^1\LL\gamma\in \calM_\calB^1(\Omega)$, $\conteta\in L^{2}(\Omega;\R^{3\times 3})$ with $\curl\conteta=0$. Then, for every sequence $\eps_k\to0$ there is a sequence $\discbeta_{\eps_k}\in \Ade(\Omega)$ converging to $(\conteta,\mu)$ in the sense of \eqref{eqconvbeta} and \eqref{eqconvmu} with $S_k=0$, such that
\begin{equation}\label{equbpropeq1}
    \limsup_{k\to\infty} 
    \frac{1}{\eps_k^2\log\frac1{\eps_k}} 
    E_{\eps_k}[\discbeta_{\eps_k},\Omega]
    \le 
    \int_\Omega\frac 12\C \conteta\cdot \conteta dx 
    + 
    \int_{\gamma\cap\Omega} \psiC^\rel(b,t)d\calH^1.
\end{equation}
\end{proposition}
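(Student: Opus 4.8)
The plan is to build a recovery sequence by discretizing an explicit continuum competitor that superposes the elastic field $\conteta$ and the singular dislocation field $\contbeta_\mu$ of \eqref{eqdivbmur3}. I would first carry out two reductions. By the definition of $\psiC^\rel$ as the $\calH^1$-elliptic envelope of $\psiC$ (Section~\ref{seccellproblem}), a standard diagonal argument allows to replace $\psiC^\rel$ by $\psiC$ on the right-hand side, at the cost of replacing $\mu$ by a sequence of polyhedral measures $\mu^\nu=b^\nu\otimes t^\nu\calH^1\LL\gamma^\nu$, with $\gamma^\nu$ a finite union of segments, that weak$^*$ converge to $\mu$ and whose unrelaxed line-tension energy converges to $\int_{\gamma\cap\Omega}\psiC^\rel(b,t)d\calH^1$; the dilute polyhedral structure can be arranged so that each $\gamma^\nu$ is $(k_\eps,\alpha_\eps)$-dilute for $\eps$ small. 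Since $\curl\conteta=0$ and $\Omega$ is connected, I would also write $\conteta=\nabla v$ and approximate $v$ by smooth maps, using continuity of the purely elastic functional. It therefore suffices to construct a recovery sequence for smooth $\conteta$ and a fixed dilute polyhedral $\mu$, with $\psiC$ in place of $\psiC^\rel$.

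For such data I would take the continuum field $\contbeta_\eps:=\eps(\log\frac1\eps)^{1/2}\conteta+\eps\,\contbeta_\mu$, where $\contbeta_\mu$ solves \eqref{eqdivbmur3} and enjoys the decay \eqref{eqDcontbetadist1}--\eqref{eqDcontbetadist2}. Note $\frac1\eps\curl\contbeta_\eps=\mu$ and $\frac{\contbeta_\eps}{\eps(\log1/\eps)^{1/2}}=\conteta+(\log\frac1\eps)^{-1/2}\contbeta_\mu$, which converges to $\conteta$ in $L^p_\loc$ for $p<\tfrac32$ since $\contbeta_\mu\in L^{3/2}$; this will yield \eqref{eqconvbeta} and \eqref{eqconvmu} once the discretization error is shown to be negligible. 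I would fix a Lipschitz cut surface $\Sigma$ with $\partial\Sigma=\gamma$, define a discrete displacement $u_\eps$ on $\eps\calL\cap(\Omega\setminus\Sigma)$ by integrating $\contbeta_\eps$ along lattice paths (single-valued on the cut domain), and define a discrete plastic strain $\discbetapl$ supported on the bonds crossing $\Sigma$ that carries the Burgers content $\eps b$. The completeness of the slip systems (Definition~\ref{defbetapl2}) is exactly what permits to write $b$ as slip on the available systems so that the quantization $\discxi_\ell(x,h)(m_\ell\cdot h)\in\Z$ holds and $\discbetapl(x,h)\in\calB$ for every bond direction $h$. Setting $\discbeta_\eps:=d_\eps u_\eps-\discbetapl$ produces an admissible strain whose circulation around any loop encircling $\gamma$ equals $\eps b$, so that $\Ce(\discbeta_\eps)\subseteq B_{m\eps}(\gamma)$ and $(\gamma,\discbeta_\eps)\in\Ade(\Omega,k_\eps,\alpha_\eps,m)$.

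For the energy I would split $\Omega$ into the far region $\Omega\setminus B_{c\eps}(\gamma)$ and the core $B_{c\eps}(\gamma)$. Away from $\Sigma$ the plastic jump cancels the multivaluedness of $u_\eps$, so $L\discbeta_\eps$ approximates $\contbeta_\eps$ pointwise, and on the far region a Taylor expansion of the quadratic cluster energy---exactly as in the upper bound of Theorem~\ref{theomainresultelastic}, using \eqref{eqdefC} and the gradient decay \eqref{eqDcontbetadist2} to control the discretization error---gives $E_\eps[\discbeta_\eps,\Omega\setminus B_{c\eps}(\gamma)]\le\int_{\Omega\setminus B_{c\eps}(\gamma)}\frac12\C\contbeta_\eps\cdot\contbeta_\eps\,dx+o(\eps^2\log\frac1\eps)$. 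The crucial decoupling of the two energy contributions comes from the orthogonality $\int_{\Omega\setminus B_{c\eps}(\gamma)}\C\conteta\cdot\contbeta_\mu\,dx=O(1)$, which follows by integration by parts from $\Div\C\contbeta_\mu=0$ and $\conteta=\nabla v$ (only boundary terms on $\partial\Omega$ and on $\partial B_{c\eps}(\gamma)$ survive, the latter being $O(1)$ by \eqref{eqDcontbetadist1}); hence the cross term is $O(\eps^2(\log\frac1\eps)^{1/2})$ and disappears after dividing by $\eps^2\log\frac1\eps$. The pure elastic term contributes $\int_\Omega\frac12\C\conteta\cdot\conteta\,dx$, while the singular term gives $\int_{\gamma\cap\Omega}\psiC(b,t)d\calH^1$ by \eqref{eqpwcontlim}. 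On the core, $|L\discbeta_\eps|=O(1)$ on a set of volume $O(\eps^2\calH^1(\gamma))$, so its energy is $O(\eps^2\calH^1(\gamma))=o(\eps^2\log\frac1\eps)$ by the diluteness bound of Remark~\ref{rem-dilute-gamma}. Combining the pieces and finally undoing the two reductions yields \eqref{equbpropeq1}.

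I expect the main obstacle to be the explicit discretization of the plastic slip near the core: one must realize the Burgers vector $b$ exactly as an admissible quantized plastic strain on the bonds crossing a single cut surface, compatible with \emph{every} bond direction $h\in\calN$ and with $\Ce(\discbeta_\eps)$ confined to $B_{m\eps}(\gamma)$. This is precisely the point where the completeness hypothesis of Definition~\ref{defbetapl2} is used, and keeping the construction consistent across the faces of the simplicial cover, at the junctions of the polyhedral segments, and across the cut is the delicate bookkeeping on which the rest of the argument rests.
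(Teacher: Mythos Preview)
Your overall strategy---reduce to polyhedral $\mu$ and the unrelaxed $\psiC$, superpose $\conteta$ with the singular field $\contbeta_\mu$, discretize, and estimate cluster energies by Taylor expansion as in Theorem~\ref{theomainresultelastic}---is the paper's strategy, and your handling of the cross term $\int\C\conteta\cdot\contbeta_\mu$ via $\Div\C\contbeta_\mu=0$ is correct (the paper defers the $\conteta$-part to the references it cites).

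There is, however, a genuine gap exactly where you flag it. For a bond $(x,x+\eps h)$ crossing your cut surface $\Sigma$ you must realize the jump $b$ in the form of Definition~\ref{defbetapl}, which amounts to $b\in\Span_\Z\{\burgersbi\ell:\,m_\ell\cdot h\ne0\}$. Completeness (Definition~\ref{defbetapl2}) guarantees this only when $h$ is not parallel to any of the generators $\hat b_j$: if $h\parallel\hat b_j$ then every slip-plane normal $m$ with Burgers vector $\hat b_j$ satisfies $m\cdot h=0$, and the $\hat b_j$-component of $b$ cannot be produced. A generic Lipschitz surface with $\partial\Sigma=\gamma$ will be crossed by bonds in such directions, so a single cut surface does not suffice.

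The paper's fix is to decompose $\mu=\sum_j\hat b_j\otimes\mu_j$ and use, for each segment $\gamma_i$ and each generator $\hat b_j$, a separate half-plane $\Sigma_i^j:=\gamma_i+[0,\infty)\hat b_j$ that \emph{contains} the direction $\hat b_j$. Any bond crossing $\Sigma_i^j$ non-tangentially is then automatically not parallel to $\hat b_j$, and completeness yields an admissible $\discbetapl$ (formulas \eqref{eq-def-xipl-upper}--\eqref{eq-xipl-upper}); one first rotates $\mu$ slightly so that no $\gamma_i$ is parallel to any $\hat b_j$. The displacement is taken as the pointwise value of the $SBV$ function $u$ with $Du=\contbeta+\contbetapl$, mollified only in $B_{2\eps}(\gamma)$, which is more direct than path-integration and makes the uniform bound $|d_\eps u_\eps|\le C_\mu$ in the core transparent. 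With this Burgers-aligned multi-surface construction in place, the remainder of your outline goes through essentially unchanged.
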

\begin{proof}
By a density argument based on the results from \cite{ContiGarroniMassaccesi2015}, as discussed, e.~g., in \cite{ContiGarroniOrtiz2015} or \cite{ContiGarroniMarziani}, it suffices to construct a sequence such that \eqref{equbpropeq1} holds with $\psi_\C^\rel$ replaced by $\psi_\C$, and $\mu\in\calM^1_\calB(\R^3)$ supported on a polygonal with $|\mu|(\partial\Omega)=0$. We write $\mu=\sum_i \theta_i\otimes t_i\calH^1\LL\gamma_i$. To simplify notation we write $\eps$ for $\eps_k$.

\emph{Step 1: Continuum construction.}
Let $\contbeta\in L^{3/2}(\R^3;\R^{3\times 3})$ be the solution to \eqref{eqdivbmur3}. By \eqref{eqpwcontlim} it satisfies 
\begin{equation}\label{eq-upper-bound1}
    \lim_{\eps\to0} \frac{1}{\ln 1/\eps}
    \int_{\Omega'\setminus B_{\eps}(\gamma)} 
        \frac12\C\contbeta\cdot\contbeta 
    \, dx 
    = 
    \int_{\Omega'\cap\gamma}  \psiC(b,t) d\calH^1
\end{equation} 
for any open set $\Omega'$ with $\Omega\subset\subset\Omega'$ and $|\mu|(\partial\Omega')=0$.

For notational simplicity we work in the case of $\calB$ three-dimensional; the cases $\dim_\Z\calB\in\{1,2\}$ can be treated in the same way.

Since the set of slip systems is complete (Definition~\ref{defbetapl2}), we can select Burgers vectors $\hat b_1$, $\hat b_2$ and $\hat b_{3}$ such that $\calB=\Span_\Z\{\hat b_1, \hat b_2, \hat b_{3}\}$ and such that, for every $j$, there are $m_j'$, $m_j''$ linearly independent, with $m_j'\cdot \hat b_j=m_j''\cdot \hat b_j=0$ and all pairs $(\hat b_j, m_j')$, $(\hat b_j, m_j'')$ are slip systems. For every segment $i$, there are uniquely determined coefficients $c_i^j\in\Z$ such that $\theta_i=\sum_j c_i^j \hat b_j$. Without loss of generality, we can assume that no segment $\gamma_i$ is parallel to one of the three vectors $\hat b_j$ (otherwise it suffices to slightly rotate $\mu$).

We define the three vector measures $\mu_j:=\sum_i c_i^j t_i\calH^1 \LL\gamma_i$ and observe that $\mu=\sum_{j=1}^3 \hat b_j\otimes \mu_j$ and $\Div \mu_j=0$ for all $j$. For every segment $\gamma_i$, we consider the surface $\Sigma_i^j:=\gamma_i + [0,\infty) \hat b_j$ and define the measures
\begin{equation}\label{eqdefhatbetaj}
    \hat\contbeta_j
    :=
    -\sum_i c_i^j \nu_i^j \calH^2 \LL \Sigma_i^j ,
\end{equation}
where $\nu_i^j:=\frac{t_i\wedge \hat b_j}{|t_i\wedge \hat b_j|}$, see Figure \ref{figureslipsurface}. For later reference, we remark that there is $C_\mu>0$ such that
\begin{equation}\label{eqcijbounded}
    |c_i^j| \le C_\mu \text{ for all $i$, $j$.}
\end{equation}
\begin{figure}\label{fig:slip-surface}
\centering
{\footnotesize \def\svgwidth{300pt}
    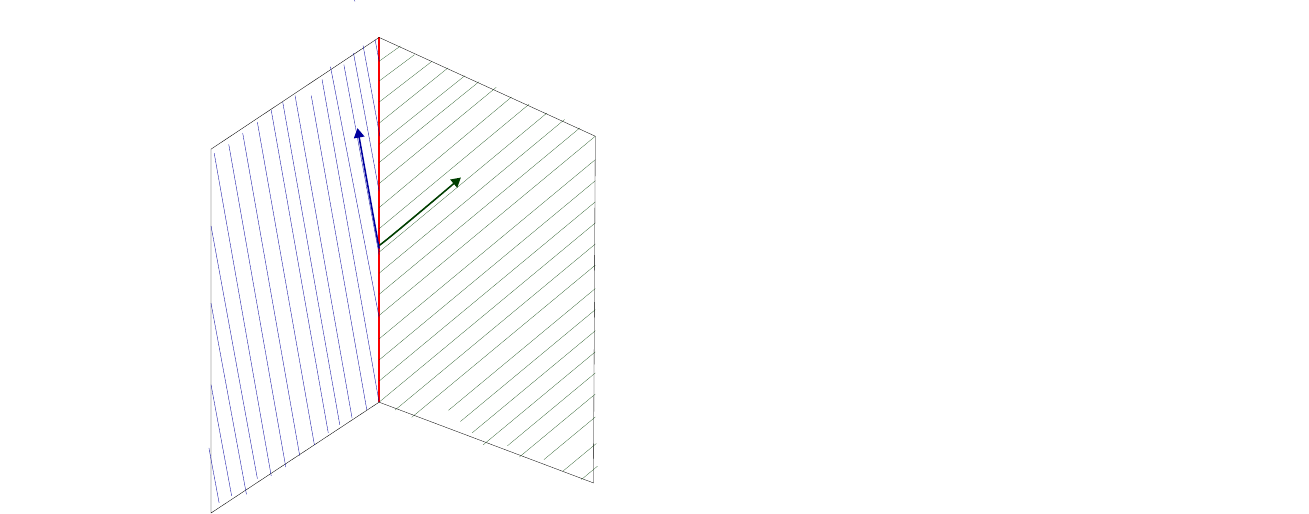}
\caption{Sketch of the construction of the slip surfaces $\Sigma_i^j$ from the segments $\gamma_i$.}
\label{figureslipsurface}
\end{figure}%
For every $i$, we denote by $a_i$ and $d_i$ the end points of the segment $\gamma_i$. The boundary of $\Sigma_i^j$ is given by $\gamma_i\cup (a_i+[0,\infty)\hat b_j)\cup (d_i+[0,\infty)\hat b_j)$ and, therefore,
\begin{equation*}\label{eq-curlbeta}
    \Curl(\nu_i^j\calH^2\LL \Sigma_i^j)
    = 
    t_i\calH^1\LL\gamma_i 
    + 
    \hat b_j \calH^1\LL (d_i+[0,\infty)\hat b_j)
    -
    \hat b_j \calH^1\LL (a_i+[0,\infty)\hat b_j).
\end{equation*}
Using that $\Div\mu_j=0$ in $\R^3$, it is readily verified that
\begin{equation}
    \curl \hat\contbeta_j
    = 
    -\mu_j \hskip1cm \text{ distributionally in $\R^3$}.
\end{equation}
We define
\begin{equation}\label{eqdefcontbetapl}
    \contbetapl:=\sum_j \hat b_j \otimes \hat\contbeta_j ,
\end{equation}
and observe that $\curl\contbetapl=-\mu$. As $\beta+\contbetapl$ is a curl-free Radon measure, there exists $u\in SBV_\loc(\R^3;\R^3)$ such that $Du=\contbeta+ \contbetapl$.  We let $\Sigma:=\cup_i\cup_j\Sigma_i^j$, so that $J_u\subseteq\Sigma$.
 
\emph{Step 2: Discretization of the displacement.}
To simplify notation, we assume that $\eps\calL \cap \Sigma=\emptyset$. Otherwise, we select $y\in B_\eps$ such that $(y+\eps\calL )\cap \Sigma=\emptyset$, and replace $u$ by $u(\cdot - y)$, and correspondingly for $\contbeta$ and $\contbetapl$. Since we are dealing with countably many values of $\eps$, we can choose a common value of $y$ which does not depend on $\eps$.

We define
\begin{equation}
    u_\eps(x)
    :=
    \begin{cases}
        \eps u(x), & 
        \text{ for } x\in \eps \calL \setminus B_{2\eps}(\gamma),\\
        \eps (u\ast \psi_\eps)(x), & 
        \text{ for } x\in \eps \calL \cap B_{2\eps}(\gamma),
    \end{cases}
\end{equation}
where, as usual, $\psi_\eps\in C^\infty_c(B_\eps)$ is a mollifier. We claim that there is a constant $C_\mu$, depending on $\mu$ but not on $\eps$, such that
\begin{equation}\label{eqdepsuepsn}
    |d_\eps u_\eps|(x,h)
    \le 
    C_\mu \text{ for all } x\in \eps\calL, h\in\calN.
\end{equation}
In the proof of \eqref{eqdepsuepsn}, we denote by $c>0$ a generic constant that depends on $\mu$ and may vary from line to line. We first observe that for any $z\in\R^3$, 
\begin{equation}\label{eqDubepseps2}
    |Du|(B_\eps(z))\le c\eps^2.
\end{equation}
To see this, we treat separately the elastic and the plastic part of $Du=\contbeta+\contbetapl$. As for the elastic part, from \eqref{eqDcontbetadist1} we have $|\contbeta(x)|\le c/\dist(x,\gamma)$, with $\gamma$ contained in a finite union of straight lines. For each straight line, an explicit integration (which can be performed assuming that the line contains $z$) gives the required estimate. For the plastic part we use that $|\contbetapl|\le c \calH^2\LL \Sigma$, where $\Sigma$ is a subset of a finite union of planes, so that $\calH^2(B_\eps\cap\Sigma) \le \pi N \eps^2$, where $N$ is the number of planes. This proves \eqref{eqDubepseps2}

In addition, \eqref{eqDubepseps2} implies the pointwise bound
\begin{equation}
    |D(u\ast \psi_\eps)|(x)
    \le 
    \int_{B_\eps(x)} \psi_\eps(x-y) d|Du|(y)  
    \le 
    c \eps^{-3} |Du|(B_\eps(x)) \le \frac c\eps,
\end{equation}
so that, for every $x\in\R^3$ and $h\in \calN$, we have
\begin{equation}\label{equasbvarphieps}
    |(u\ast\psi_\eps)(x)-(u\ast \psi_\eps)(x+\eps h)| 
    \le 
    \eps |h| \|D(u\ast \psi_\eps)\|_{L^\infty} \le c.
\end{equation}

In order to relate $u\ast \psi_\eps$ with $u$, we observe that, if $x\not\in B_{2\eps}(\gamma)$, then $\curl\contbeta=\curl\contbetapl=0$ in $B_\eps(x)$, so that two functions $u^{\rm el}$, $u^{\rm pl}\in SBV(B_\eps(x);\R^3)$ exist with $Du^{\rm el}=\contbeta$, $Du^{\rm pl}=\contbetapl$. Furthermore, again from \eqref{eqDcontbetadist1} we have $|\contbeta(z)|\le c/\eps$ for every $z\in B_\eps(x)$, so that $|u^{\rm el}(z)-u^{\rm el}(x)|\le c$ for all $z\in B_\eps(x)$.
By~\eqref{eqdefhatbetaj} and~\eqref{eqcijbounded}, $|u^{\rm pl}(z)-u^{\rm pl}(x)|\le c$ is also obtained for all $z\in B_\eps(x)$. Therefore, $|u(z)-u(x)| \leq c$ for all $z\in B_\eps(x)$. By convexity of the norm, we conclude
\begin{equation}
    |u-(u\ast\psi_\eps)|(x)
    \le 
    c \text{ for all } x\not\in B_{2\eps}(\gamma).
\end{equation}
Recalling the definition of $u_\eps$, this leads to
\begin{equation}\label{eqdistuuepsmoll}
    |u_\eps-\eps (u\ast\psi_\eps)|(x)
    \le 
    c\eps  \text{ for all } x\in \eps\calL.
\end{equation}
Fix now $x\in\eps\calL$ and $y=x+\eps h$, with $h\in\calN$. Using \eqref{equasbvarphieps} and \eqref{eqdistuuepsmoll},
\begin{equation}
\begin{split}
    |d_\eps u_\eps|(x,h)
    = &
    \frac{|u_\eps(x+\eps h)-u_\eps(x)|}{\eps}
    \\ \le &
    |(u\ast \psi_\eps)(x+\eps h)-(u\ast\psi_\eps)(x)| 
    +
    \frac{2\sup_{z\in\eps\calL} 
    |u_\eps-\eps u\ast \psi_\eps|(z)}{\eps}
    \\ \le & 
    c.
\end{split}
\end{equation}
This concludes the proof of \eqref{eqdepsuepsn}.
 
\emph{Step 3. Discretization of the plastic strain.}

Next, we construct $\discbetapl_\eps$. For any edge $(x,h)\in\bonds_\eps^\Omega$, we let 
\begin{equation}\label{eq-def-xipl-upper}
    \discbetapl_\eps(x,h)
    :=
    \sum_{j=1}^{n'} \hat b_j q^\eps_j(x,h),
\end{equation} 
with
\begin{equation}\label{eq-xipl-upper}
    q^\eps_j (x,h)
    := 
    \sum_{i: [x,x+\eps h] \cap\Sigma_i^j\ne\emptyset} 
    c_i^j \text{sign}(h\cdot \nu_i^j)\in \Z,
\end{equation}
where $\text{sign}(t)=0$ if $t=0$, so that bonds which are parallel to $\Sigma_i^j$ are not included in the sum. In order to show that this an admissible plastic slip in the sense of Definition \ref{defbetapl}, we need to verify that, for every $x$ and $h$, there are $\zeta^\eps_\ell(x,h)\in\R$ such that
\begin{equation}
    \discbetapl_\eps(x,h)
    =
    \sum_{\ell=1}^{N_s} 
    \discxi^\eps_\ell(x,h)\burgersbi{\ell} m_\ell\cdot h.
\end{equation}
and $\zeta^\eps_\ell(x,h) m_\ell\cdot h\in \Z$ for all $\ell$, $x$ and $h$. Fix $j\in \{1,\dots, n'\}$. We remark that the bonds $h$ which are included in the sum \eqref{eq-xipl-upper} are not parallel to $\hat b_j$ and, therefore, they have a nonzero projection on the space $\hat b_j^\perp$. In turn, the two vectors $m_j'$ and $m_j''$ are linearly independent and orthogonal to $\hat b_j$. Hence, they span the two-dimensional space $\hat b_j^\perp$ and there is one of them which is not orthogonal to $h$. This implies that there is $\ell\in\{1,\dots, N_s\}$ such that $b_\ell=\hat b_j$ and $m_\ell\cdot h\ne 0$. We define 
\begin{equation}
    \discxi^\eps_{\ell,j}(x,h)
    := 
    \frac{  q^\eps_j(x,h)}{m_\ell\cdot h},
\end{equation}
the other components being zero, and, finally, $\discxi^\eps_\ell:=\sum_j \discxi^\eps_{\ell,j}$. Therefore, $\discbetapl_\eps$ is a discrete plastic strain. 

We define $\discbetael_\eps:=d_\eps u_\eps-\discbetapl_\eps$ and observe that
$(\gamma,\discbetael_\eps)\in \Ade(\Omega, k_\eps, \alpha_\eps. m)$ and, for $(x,h)\in \bonds^\Omega_\eps$ with $\dist(x,\gamma)\ge 2\eps$ and $\dist(x+\eps h,\gamma)\ge 2\eps$, recalling \eqref{eqdefhatbetaj} and \eqref{eqdefcontbetapl},
\begin{equation}\label{eqxielepsbetaint}
\begin{split}
    \discbetael_\eps(x,h)
    = &
    d_\eps u_\eps(x,h)-\discbetapl_\eps(x,h)
    \\ = &
    u(x+\eps h)-u(x) - \sum_j\hat b_j q_j^\eps(x,h)
    \\ = &
    \int_0^\eps  \nabla u(x+th)h dt
    =
    \int_0^\eps  \contbeta(x+th)h dt.
\end{split}
\end{equation}

\emph{Step 4: Estimate of the elastic energy.} 
Choose $\sigma\in(0,1)$ and consider a cluster $x+\eps \calC$, $x\in \clust_\eps^\Omega$. We define a representative strain in this cluster by $F_x := \eps(\psi_\eps\ast \contbeta)(x)$. Since $E_\calC$ is a quadratic form, we have
\begin{equation}\label{eqxielclust}
    E_\calC[\tau^x_\eps\discbetael_\eps] 
    \le 
    (1+\sigma) E_\calC[\discbeta^{F_x}] 
    + 
    c_\sigma E_\calC[\tau^x_\eps\discbetael_\eps-\discbeta^{F_x}],
\end{equation}
where we recall that the map $\discbeta^{F_x}:\calC_\calN\to\R^3$ is defined from the matrix $F_x$ as $\discbeta^{F_x}(x,h)=F_xh$.
 
The first term is controlled recalling the definition of the elastic coefficients $\C$ and Poincar\'e's inequality, with the result,
\begin{equation}\label{eq-estimate-elastic-upper}
\begin{split}
    \eps^3 E_\calC[\discbeta^{F_x}] 
    & = 
    \frac12 \calL^3(\eps T_*) F_x\cdot \C F_x
    \\ & \le  
    (1+\sigma)\eps^2\int_{x+\eps T_*} 
    \frac12\contbeta\cdot \C \contbeta dy 
    + 
    c_\sigma \eps^4\int_{B_{d_{T_*}\eps}(x)} |D\contbeta|^2 dy .
\end{split}
\end{equation}
To estimate the second term of \eqref{eqxielclust}, we use \eqref{eqclusterfromabove}, which leads to
\begin{equation}
    E_\calC[\tau^x_\eps\discbetael_\eps-\discbeta^{F_x}]
    \le 
    c \sum_{(y,h)\in \calC_\calN}
    |\discbetael_\eps(x+\eps y,h)-F_xh|^2.
\end{equation}
If $\dist(x,\gamma)\ge 2d_{T_*}\eps$, recalling \eqref{eqxielepsbetaint}, \begin{equation}\begin{split}
    |\discbetael_\eps(x+\eps y,h)-\eps\beta(x)h|
    \le &
    \int_0^\eps |\beta(x+\eps y+t h)-\beta(x)|\,|h|dt
    \\ \le &
    c \eps^2 \|D\contbeta\|_{L^\infty(B_{d_{\calC}\eps}(x))} .
\end{split}
\end{equation}
By the definition of $F_x$,
\begin{equation}
    |F_x-\eps\beta(x)|
    =
    \eps |(\psi_\eps\ast\beta)(x)-\beta(x)|
    \le 
    c \eps^2 \|D\contbeta\|_{L^\infty(B_{\eps}(x))} 
\end{equation}
so that with~\eqref{eqDcontbetadist2}, if $\dist(x,\gamma)\ge 2 d_\calC\eps$,
\begin{equation}
     |\discbetael_\eps(x+\eps y,h)-F_xh|
     \le 
     c \eps^2 \|D\contbeta\|_{L^\infty(B_{d_{\calC}\eps}(x))} 
     \le 
     \frac{c\eps^2}{\dist^2(x,\gamma)} ,
\end{equation}
and \eqref{eqxielclust} becomes
\begin{equation}
    \eps^3  E_\calC[\tau^x_\eps\discbetael_\eps]
    \le 
    (1+\sigma)^2 \eps^2
    \int_{x+\eps T_*} \frac12\contbeta\cdot \C \contbeta dy 
    + 
    c_\sigma \eps^4
    \int_{B_{d_\calC\eps}(x)}\frac{c}{\dist^4(x,\gamma)} dy,
\end{equation}
for all $x$ such that $\dist(x,\gamma)\ge 2 d_\calC\eps$. Summing over all $x$ which are at distance at least $2d_\calC\eps$ from the singularity and recalling that the balls have finite overlap,
\begin{equation}\label{eq-finally-upper-elastic}
\begin{split}
    &
    \sum_{x\in  \clust_\eps^\Omega , \dist(x,\gamma)
    \ge 
    2d_\calC\eps} \eps^3 E_\calC[\tau^x_\eps\discbetael_\eps]
    \le 
    (1+\sigma)^2 \eps^2
    \int_{\Omega'\setminus B_{d_\calC\eps}(\gamma)} 
        \frac12\contbeta\cdot \C \contbeta 
    dy 
    \\ & \qquad\qquad + 
    c_\sigma \eps^4
    \int_{\Omega'\setminus B_{d_\calC\eps}(\gamma)}
        \frac{1}{\dist^4(x,\gamma)}
    dy . 
\end{split}
\end{equation}
The first term is estimated by \eqref{eq-upper-bound1}, the second by a direct integration,
\begin{equation}\label{equbdist4}
    \eps^2
    \int_{\Omega'\setminus B_{d_\calC\eps}(\gamma)} 
        \frac{1}{\dist^4(x,\gamma)}
    dy
    \le 
    C_\mu.
\end{equation}
It remains to estimate the contribution of the core region. By \eqref{eqdepsuepsn} and \eqref{eqcijbounded}, we have that $|\discbetael_\eps|(x,h)\le C_\mu$ everywhere. Therefore,
\begin{equation}
\begin{split}
    \sum_{x\in  \clust_\eps^\Omega , \dist(x,\gamma)< 2d_\calC\eps}
    E_\calC[\tau^x_\eps\discbetael_\eps]
    \le 
    & C_\mu  \#\{x\in\clust_\eps^\Omega , \dist(x,\gamma)< 2d_\calC\eps\}.
\end{split}
\end{equation}
For every segment $\gamma_i$, we have
\begin{equation}
\begin{split}
    \#\{x\in\clust_\eps^\Omega , \dist(x,\gamma_i)< 2d_\calC\eps\}
    \le 
    c +c\frac{\calH^1(\gamma_i)}{\eps} ,
\end{split}
\end{equation}
where the constant only depends on the lattice. Therefore,
\begin{equation}\label{equbthree}
\begin{split}
    \limsup_{\eps\to0}\frac{1}{\eps^2\ln\frac1\eps}
    \sum_{x\in  \clust_\eps^\Omega , \dist(x,\gamma)< 2d_\calC\eps} 
    \eps^3 E_\calC[\tau^x_\eps\discbetael_\eps]
    \le & 
    C_\mu \limsup_{\eps\to0}\frac{1}{\ln\frac1\eps}
    =
    0 .
\end{split}
\end{equation}
Collecting terms, \eqref{eq-upper-bound1}, \eqref{eq-finally-upper-elastic}, \eqref{equbdist4} and \eqref{equbthree}, concludes the proof of the upper bound.
\end{proof}

\appendix
\section{Extension}
\label{secappext}
\subsection{Rigidity}

We first recall a rigidity result from \cite{ContiGarroniRigidity}, which is based on Korn's inequality and on the Bourgain-Brezis critical integrability bound \cite{BourgainBrezis2007}, and extend it to smaller exponents.
\begin{proposition}[From \cite{ContiGarroniRigidity}]\label{proprigiditycras}
Let $\Omega\subseteq\R^3$ be a bounded connected Lipschitz set, $q\in[1,\frac32]$. Then, there is a constant $c=c(\Omega,q)$ such that for any $\contbeta\in L^q(\Omega;\R^{3\times 3})$ there is ${S}\in \R^{3\times 3}_\skw$ such that \begin{equation}
    \|\contbeta-{S}\|_{L^{q}(\Omega)} 
    \le 
    c \|\contbeta+\contbeta^T\|_{L^{q}(\Omega)} 
    +
    c|\curl\contbeta|(\Omega).
\end{equation}
\end{proposition}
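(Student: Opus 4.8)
The plan is to split $\contbeta$ into a part that carries the incompatibility $\curl\contbeta$ and a curl-free remainder to which Korn's inequality can be applied. First I would solve the system $\curl Y=\curl\contbeta$ on $\Omega$ for an auxiliary field $Y$. The rows of the measure $\curl\contbeta$ are divergence-free (since $\Div\curl=0$), and this solenoidal structure is exactly what makes the Bourgain--Brezis critical estimate \cite{BourgainBrezis2007} available: in three dimensions a divergence-free datum that is only a bounded measure admits a solution of the $\curl$ equation in the critical space $L^{3/2}$, with
\[
  \|Y\|_{L^{3/2}(\Omega)}\le c\,|\curl\contbeta|(\Omega).
\]
This is where the exponent $3/2$, and hence the restriction $q\le\tfrac32$, enters. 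I would import this solvability on bounded Lipschitz domains as the key external ingredient; one way to obtain it is to extend $\curl\contbeta$ to a compactly supported solenoidal measure on $\R^3$ of comparable mass and apply the Bourgain--Brezis theorem there, the extension being the first technical point and relying on the Lipschitz regularity of $\partial\Omega$.

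Once $Y$ is in hand, the remainder $w:=\contbeta-Y$ satisfies $\curl w=0$ on $\Omega$, and since $Y\in L^{3/2}(\Omega)\subseteq L^{q}(\Omega)$ for every $q\le\tfrac32$ one also has $w\in L^{q}(\Omega)$. For $q\in(1,\tfrac32]$ I would then invoke Korn's inequality (in the form valid for curl-free $L^{q}$ fields on a bounded connected Lipschitz domain): there is $S\in\Rskw$ with $\|w-S\|_{L^{q}(\Omega)}\le c\,\|w+w^{T}\|_{L^{q}(\Omega)}$. Assembling the pieces, and downgrading the $L^{3/2}$ bound by H\"older's inequality (on the bounded set $\Omega$) to $\|Y\|_{L^{q}(\Omega)}\le c\,\|Y\|_{L^{3/2}(\Omega)}\le c\,|\curl\contbeta|(\Omega)$, I obtain, using $\sym w=\sym\contbeta-\sym Y$,
\[
  \|\contbeta-S\|_{L^{q}}\le\|w-S\|_{L^{q}}+\|Y\|_{L^{q}}
  \le c\,\|\contbeta+\contbeta^{T}\|_{L^{q}}+c\,\|Y\|_{L^{q}}
  \le c\,\|\contbeta+\contbeta^{T}\|_{L^{q}}+c\,|\curl\contbeta|(\Omega),
\]
which is the assertion for $q\in(1,\tfrac32]$.

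The main obstacle is the endpoint $q=1$, which is precisely the content of the ``extension to smaller exponents'' over \cite{ContiGarroniRigidity}: Korn's inequality \emph{fails} in $L^{1}$, so the last step above cannot be used verbatim. The compatible part can no longer be controlled by a classical singular-integral bound, and one must instead exploit a Bourgain--Brezis-type gain adapted to the curl-free field $w$ (equivalently, a Korn--Maxwell--Sobolev estimate that survives down to $q=1$). Concretely, I would work with the axial vector $\mathrm{axl}(\skw\contbeta)$, whose distributional gradient is a first-order linear combination of $\curl\contbeta$ and the derivatives of $\sym\contbeta$, and recover the field itself by inverting the gradient with the critical estimate rather than with the (unbounded in $L^{1}$) Riesz transforms. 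The remaining points, namely the solenoidal extension of the measure and the bookkeeping of constants depending on $\Omega$ and $q$, I expect to be routine once the $L^{3/2}$ solvability of the $\curl$ equation and the appropriate Korn-type inequality are established.
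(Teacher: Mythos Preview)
Your strategy for $q\in(1,\tfrac32]$ --- solve $\curl Y=\curl\contbeta$ via Bourgain--Brezis to obtain $Y\in L^{3/2}$, then apply Korn's inequality to the curl-free remainder $w=\contbeta-Y$ --- is exactly the mechanism behind the result in \cite{ContiGarroniRigidity}, and the paper's proof follows the same splitting. The implementation differs: you apply Korn directly on $\Omega$; the paper instead retraces the argument of \cite{ContiGarroniRigidity}, working first on the unit cube (where $Y$ is produced and then downgraded to $L^q$ by H\"older), then scaling to cubes $Q_r$ picking up the factor $|Q_r|^{\frac1q-\frac1{1^*}}$, and finally passing to a general Lipschitz $\Omega$ by a Whitney-type covering and the weighted estimate $\int_\Omega \dist^q(x,\partial\Omega)|DR|^q\,dx \le c\|\contbeta+\contbeta^T\|_{L^q}^q + c|\curl\contbeta|(\Omega)^q$, where $R$ is the piecewise-constant field of local skew matrices. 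The covering route keeps the argument parallel to the nonlinear geometric-rigidity version and makes the scaling explicit; your direct route is shorter once one quotes Korn on Lipschitz domains as a black box.

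At $q=1$ you correctly identify the obstruction. The paper's sketch, as written, still invokes ``Korn's inequality'' on the cube to conclude the assertion for $\Omega=Q$ and does not treat the $L^1$ endpoint separately; the subsequent covering does not repair this, since the local skew matrices are produced by that cube estimate. In the paper the proposition is in fact applied only with $q=\tfrac32$ (in Proposition~\ref{prop:compact} and in Step~2 of Lemma~\ref{lemmaextendbetaball}), so the endpoint does not affect the main results, but your caution about $q=1$ is warranted and your suggested Korn--Maxwell--Sobolev route is the natural substitute.
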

{The corresponding statement holds in any dimension and in the geometrically nonlinear case, with the same proof.}
\begin{proof}
The important case is the critical exponent $q=\frac32$, which was proven in \cite{ContiGarroniRigidity}. The case $q\in [1,\frac32)$ follows from the same argument, and we briefly mention the changes using the notation of \cite{ContiGarroniRigidity} and referring to formulas in that proof.
{For clarity we work in generic dimension $n$, proving the assertion for $q\le 1^{*}=(n-1)/n$.
One starts with the unit cube; \cite[eq.~(8)]{ContiGarroniRigidity}
holds unchanged and gives by Hölder a bound for $Y$ in $L^q(Q)$, $Q:=(0,1)^n$. Equation
\cite[eq.~(9)]{ContiGarroniRigidity}
becomes
\begin{equation}
 \|Du+Du^T\|_{L^q(Q)}\le \|\beta+\beta^T\|_{L^q(Q)}
 + c \|\curl\beta\|_{L^1(Q)},
\end{equation}
which by Korn's inequality implies the assertion if $\Omega=Q$.

After scaling to $Q_r=(0,r)^n$, this leads to
(cf. \cite[eq.~(11), note that the right-hand side of that equation should contain $\beta$ instead of $Du$)]{ContiGarroniRigidity})
\begin{equation}
 \|\beta-R\|_{L^q(Q_r)}\le \|\beta+\beta^T\|_{L^q(Q_r)}
 + c |Q_r|^{\frac 1q-\frac1{1^{*}}}
 \|\curl\beta\|_{L^1(Q_r)}.
\end{equation}

The next part of the proof, up to (16), is unchanged, replacing $s$ by $q$. Equation (17) becomes
\begin{equation}\begin{split}
\int_\Omega \dist^q(x,\partial\Omega)|DR|^qdx\le& c \sum_j \left[
\|\beta+\beta^T\|^q_{L^q(Q^j)} +
c|Q_j|^{1-\frac{q}{1^*}}
|\curl\beta|(Q_j)^q\right].
                \end{split}
\end{equation}
Since $q\le 1^*$, the factor $|Q_j|^{1-\frac q{1^*}}$ is uniformly bounded and can be dropped. Therefore, as in
\cite[eq.~(17)]{ContiGarroniRigidity},
\begin{equation}\begin{split}
\int_\Omega \dist^q(x,\partial\Omega)|DR|^qdx\le& c
\|\beta+\beta^T\|^q_{L^q(\Omega)} +
c\sum_j
|\curl\beta|(\Omega)^{q-1}
|\curl\beta|(Q_j)\\
\le &c
\|\beta+\beta^T\|^q_{L^q(\Omega)} +c
|\curl\beta|(\Omega)^{q}
.                \end{split}
\end{equation}
This concludes the proof.}
\end{proof}

\subsection{Extension in cylinders}

\begin{proof}[Proof of Lemma \ref{lemmaextendbetacyl}]
\emph{Step 1. We show that, for every $d\in\N$, $\contbeta\in L^p(\omegaout;\R^{d\times 3})$ with $\curl\contbeta=0$ in $\omegaout$, there is $\hat \contbeta\in L^p(\omega;\R^{d\times 3})$ such that $\hat\contbeta=\contbeta$ on $\omegaout$, $\curl\hat\contbeta=0$ on $\omega\setminus(\{0\}\times (0,\ell))$, and
\begin{equation}\label{eqlemmaextendbetacyl}
    \|\hat\contbeta\|_{L^p(\omegain)}
    \le 
    C \|\contbeta\|_{L^p(\omegaout)}.
\end{equation}
}
By scaling, we can assume $\rho=1$. Since the curl is computed row-wise, it suffices to consider the case that $\contbeta$ is a vector, i.~e., $d=1$, and we can identify column vectors with row vectors. For $\lambda\in (\frac32,2)$ chosen below, we set $\theta(t):=\lambda+(1-\lambda)t$. We observe that $\theta$ is a bijective map from $[0,1]$ onto $[1,\lambda]$ and that $\theta(1)=1$. We define $\varphi:\omegain\setminus(\{0\}\times (0,\ell))\to\omegaout$ as
\begin{equation}
    \varphi(x):=\frac{x'}{|x'|}\theta\big(|x'|\big)+ x_3e_3,
\end{equation}
where $x':=(x_1,x_2,0)$ denotes the projection of $x$ onto the plane spanned by $e_1$ and $e_2$. We compute
\begin{equation}
    D\varphi(x)
    =
    \Big(\Id_2-\frac{x'\otimes x'}{|x'|^2}\Big) 
    \frac{1}{|x'|} \theta\big(|x'|\big) 
    + 
    \frac{x'\otimes x'}{|x'|^2} 
    \theta'\big(|x'|\big)+ e_3\otimes e_3,
\end{equation}
where $\Id_2:=e_1\otimes e_1+e_2\otimes e_2{\in\R^{3\times 3}}$ is the identity matrix in the first $2\times 2$ block. In particular, $D\varphi(x)$ is symmetric and has eigenvalues 
\begin{equation}\label{eqeigenvaldvarphi}
    \frac{\lambda}{|x'|}+1-\lambda,\hskip2mm  {1-\lambda},\hskip2mm 1.
\end{equation}
We define
\begin{equation}
    \hat\contbeta(x)
    :=
    \begin{cases}
        \contbeta(x), & \text{ if } x\in\omegaout,\\
        D\varphi^T(x)\contbeta(\varphi(x)), & \text{ if } x\in\omegain.
    \end{cases}
\end{equation}
In $\omegain$, we compute
\begin{equation}
    D\hat\contbeta 
    = 
    D\varphi^T D\contbeta\circ\varphi\, D\varphi 
    +
    \contbeta\circ\varphi\cdot D^2\varphi,
\end{equation}
which componentwise reads
\begin{equation}
    \partial_j \hat\contbeta_i 
    = 
    \partial_j((\partial_i\varphi_h) (\contbeta_h\circ\varphi))
    =
    (\partial_i\varphi_h) 
    (\partial_k\contbeta_h) \circ\varphi \partial_j\varphi_k
    +
    (\partial_i\partial_j \varphi_h)(\contbeta_h\circ\varphi).
\end{equation}
Since $\curl\contbeta=0$ on $\omegaout$, we have that $D\contbeta$ is a symmetric matrix. Hence, $D\hat\contbeta$ is symmetric and $\curl\hat\contbeta=0$ in $\omegain\setminus(\{0\}\times\R)$. Let $x \in \partial\omegain \cap \partial\omegaout$ and let $\tau$ be a vector tangential to $\partial\omegain \cap \partial\omegaout$ in $x$. Since $\varphi(x)=x$ on this set, $D\varphi(x)\tau = \tau$. Therefore, the traces satisify
\begin{equation}
     (D\varphi(x)^T\contbeta(\varphi(x)))\cdot\tau 
     =
    \contbeta(x)\cdot D\varphi(x)\tau=\contbeta(x)\cdot\tau.
\end{equation}
This proves that $\curl\hat\contbeta=0$ in $\omega\setminus(\{0\}\times\R)$.
	
It remains to estimate the norm. By \eqref{eqeigenvaldvarphi} {and $\lambda\in(\frac32,2)$}, we obtain
\begin{equation}
	|D\varphi|^p(x)\le \frac{C}{|x'|^{p-1}} |\det D\varphi|{(x)}.
\end{equation}
{From the definition of $\theta$ we obtain} $|x'| = (\lambda-\theta(|x'|))/(\lambda-1)$. We compute
\begin{equation}\label{eqhatbetaomegain}
\begin{split}
    \int_\omegain |\hat\contbeta|^p dx 
    & \le  
    \int_\omegain |D\varphi|^p |\contbeta\circ\varphi|^p dx
    \\ & \le 
    \int_\omegain 
        \frac{C}{|x'|^{p-1}}
        |\det D\varphi(x)|\,  |\contbeta(\varphi(x))|^p 
    dx 
    \\ & = 
    \int_\omegain 
        \frac{C (\lambda-1 )^{p-1}}{|\lambda-\theta(|x'|)|^{p-1}}
        |\det D\varphi(x)|\,  |\contbeta(\varphi(x))|^p 
    dx
    \\ & = 
    \int_\omegaout 
        \frac{C(\lambda-1 )^{p-1}}{(\lambda-|y'|)^{p-1}}  
        |\contbeta(y)|^p \chi_{\{|y'|<\lambda\}}
    dy \,,
\end{split}
\end{equation}
where $\chi_{\{|y'|<\lambda\}}=1$ if $|y'|:=|(y_1,y_2)|<\lambda$ and $0$ otherwise. We set,
\begin{equation}
    f(\lambda)
    :=
    \int_\omegaout 
        \frac{|\contbeta(y)|^p}{(\lambda-|y'|)^{p-1}}  
        \chi_{\{|y'|<\lambda\}} 
    dy
\end{equation}
and average over all possible choices of $\lambda$. Recalling that $p\in[1,2)$,
\begin{equation}
\begin{split}
    \int_{1}^2 f(\lambda)d\lambda
    = &
    \int_\omegaout \int_{|y'|}^2 
        \frac{ |\contbeta(y)|^p}{(\lambda-|y'|)^{p-1}}
    d\lambda \, dy 
    \\ = &
    \frac{1}{2-p}
    \int_\omegaout  
        |\contbeta(y)|^p  (2-|y'|)^{2-p}
    dy
    \\ \le &
    c_* \int_\omegaout |\contbeta|^p dy.
\end{split}
\end{equation}
Therefore, there is $\lambda\in(\frac32,2)$ such that $f(\lambda)\le 2 c_* \|\contbeta\|_{L^p(\omegaout)}^p$. Inserting in \eqref{eqhatbetaomegain} concludes the proof of \eqref{eqlemmaextendbetacyl}.

\emph{Step 2. Proof of \eqref{eqlemmaextendbetacyllinelast}.}
By scaling, we can work with $\rho=1$ and $\ell\ge1$. Let $N:=2\lfloor 2\ell\rfloor$, $z_j:=j(\ell-\frac12)/N$, so that $0=z_0<z_1<\dots <z_N=\ell-\frac12$ and $z_{i+1}-z_i=(\ell-\frac12)/(2\lfloor 2\ell\rfloor)\in [\frac18,\frac14)$. Consider the sets $\omega_i := B_2'\times (z_i, z_i+\frac12)$, $\omegaouti{i} := (B_2'\setminus B_1')\times (z_i, z_i+\frac12)$, $\omegaini{i} := B_1'\times (z_i, z_i+\frac12)$.
 
By Korn's inequality, for every $i$ there is $R_i\in\R^{3\times 3}_\skw$ such that
\begin{equation}\label{eqkornonecyl}
    \|\contbeta-R_i\|_{L^p(\omegaouti{i})}
    \le 
    C \|\contbeta+\contbeta^T\|_{L^p(\omegaouti{i})}.
\end{equation}
Since $z_{i+1}\le z_i+\frac14$, we obtain $\calL^3(\omegaouti{i} \cap \omegaouti{i+1})\ge \frac34\pi$ and, therefore,
\begin{equation}\label{eqkornnextonecyl}
    |R_i-R_{i+1}|
    \le 
    C \|\contbeta+\contbeta^T\|_{L^p(\omegaouti{i}\cup\omegaouti{i+1})}.
\end{equation}
We define the affine isometries $u_i:\R^3\to\R^3$ as
\begin{equation}
    u_i(x):=R_ix+d_i ,
\end{equation}
where $d_0=0$ and $d_{i+1}:=d_i+R_ie_3z_i-R_{i+1}e_3z_i$, so that
\begin{equation}\label{equiukw1p}
    \|u_i-u_{i+1}\|_{W^{1,p}(\omega_i\cup\omega_{i+1})}
    \le 
    c |R_i-R_{i+1}|.
\end{equation}
We fix a partition of unity $\theta_0, \dots, \theta_N\in C^\infty(\R;[0,1])$, with $\sum_i \theta_i(t)=1$ for all $t\in\R$ and $\supp\theta_0 \subseteq(-\infty,1/2)$, $\supp\theta_N\subseteq(z_N,\infty)$, $\supp\theta_i\subseteq(z_i, z_i+\frac12)$ for $1\le i\le N-1$, and $|\theta_i'|(t)\le C$ for all $i\in\{0, \dots, N\}$ and all $t\in\R$. We define $u:\R^3\to\R^{3}$ as
\begin{equation}
    u(x):= \sum_{i=0}^N \theta_i(x_3)u_i(x)
\end{equation}
and observe that
\begin{equation}
    Du(x)-Du_j(x)
    = 
    \sum_{i=0}^N  
    \theta_i(x_3)(Du_i-Du_j)(x)+\theta_i'(x_3)(u_i-u_j)(x)\otimes e_3,
\end{equation}
where, for $x\in\omega_j$, only the terms with $j-4\le i\le j+4$ are relevant. With \eqref{equiukw1p}, \eqref{eqkornnextonecyl} and $Du_j+Du_j^T=0$, we obtain
\begin{equation}\label{eqdudutlpbeta}
    \|Du+Du^T\|_{L^p(\omega)}
    \le 
    c \|\contbeta+\contbeta^T\|_{L^p(\omegaout)},
\end{equation}
and similarly with \eqref{eqkornonecyl}
\begin{equation}\label{eqdudutlpbetaout}
    \|Du-\contbeta\|_{L^p(\omegaout)}
    \le 
    c \|\contbeta+\contbeta^T\|_{L^p(\omegaout)}.
\end{equation}

We apply Step~1 to the function $\contbeta-Du$ and obtain a function $\hat \contbeta\in L^p(\omega;\R^{3\times 3})$ with $\curl\hat\contbeta$ concentrated on $\{0\}\times (0,\ell)$ and 
\begin{equation}
    \|\hat\contbeta\|_{L^p(\omegain)}
    \le 
    c \|\contbeta-Du\|_{L^p(\omegaout)}.
\end{equation}
We set $\tilde\contbeta:=Du+\hat\contbeta$, so that $\curl \tilde \contbeta = \curl\hat\contbeta$ in $\omega$ and $\tilde\contbeta=\contbeta$ in $\omegaout$, and use \eqref{eqdudutlpbeta} and \eqref{eqdudutlpbetaout} to obtain
\begin{equation}
\begin{split}
    \|\tilde\contbeta+\tilde\contbeta^T\|_{L^p(\omegain)}
    \le &
    \|Du+Du^T\|_{L^p(\omegain)} 
    +
    2 \|\hat\contbeta\|_{L^p(\omegain)}
    \\ \le & 
    c\|\contbeta+\contbeta^T\|_{L^p(\omegaout)}
    + 
    c \|\contbeta-Du\|_{L^p(\omegaout)}
    \le 
    c\|\contbeta+\contbeta^T\|_{L^p(\omegaout)}.
\end{split}
\end{equation}

\emph{Step 3. Structure of $\curl\beta$, case $\theta=0$.} The extension provided above, together with  Lemma \ref{lemmacurll1h1}, gives that there exists $\theta\in\R^d$ such that $\Curl\tilde\beta=\theta\otimes e_3\calH^1\LL (\{0\}\times(0,\ell))$. In the case $\theta=0$, this implies that $\tilde\beta$ is a gradient in $\omega$. Therefore, there a function $u\in W^{1,p}(\omegaout;\R^d) $ such that $Du=\beta$. By standard extension argument in Sobolev spaces, an extension $\tilde u$ of $u$ in $W^{1,p}(\omega;\R^d)$ is obtained, and a redefinition $\tilde\beta:=D\tilde u$. Hence, Step 2 is unchanged for every $p\in [1,\infty)$.
\end{proof}

\subsection{Extension in balls}

\begin{proof}[Proof of Lemma \ref{lemmaextendbetaball}]
\emph{Step 1. We show that for every $p\in[1,2]$, $d\in\N$, if $\contbeta\in L^p(\omegaout;\R^{d\times 3})$	with $\curl\contbeta=0$ in $\omegaout$ is given, then, there is $\tilde \contbeta\in L^p(\omega;\R^{d\times 3})$ such that $\tilde\contbeta=\contbeta$ on $\omegaout$, $\curl\tilde\contbeta=0$ on $\omega:=\omegain\cup\omegaout$, and
\begin{equation}\label{eqlemmaextendbetaball1}
    \|\tilde\contbeta\|_{L^p(\omegain)}
    \le 
    2^{1/p} \|\contbeta\|_{L^p(\omegaout)}.
\end{equation}}
The construction is similar that in Step 1 of the proof of Lemma \ref{lemmaextendbetacyl}. Also in this case, we work in the case $\rho=1$ and $d=1$. 	We set $\theta(t):=2-t$ and consider $\varphi:B_1\setminus\{0\}\to B_{2}\setminus B_1$ defined as
\begin{equation}
	\varphi(x):=\frac{x}{|x|}\theta\big(|x|\big).
\end{equation}
We observe that $\varphi((0,1]v_i)=[1, 2)v_i$, so that $\varphi(\omegain\setminus\{0\}) \subseteq\omegaout$. We compute
\begin{equation}
    D\varphi(x)
    =
    \Big(\Id-\frac{x\otimes x}{|x|^2}\Big) \frac{1}{|x|} \theta\big(|x|\big) 
    + 
    \frac{x\otimes x}{|x|^2} \theta'\big(|x|\big).
\end{equation}
In particular, $D\varphi(x)$ is symmetric and has eigenvalues 
\begin{equation}\label{eqeigenvaldvarphiba}
    \frac{2}{|x|}-1,\hskip2mm  \frac{2}{|x|}-1,\hskip2mm -1,
\end{equation}
so that $|\det D\phi|(x)=(2/|x|-1)^2\ge 1$ and 
\begin{equation}\label{eqdphi2detdphi}
    |D\varphi|^2(x)
    =
    2|\det D\varphi|(x)+1\le 3|\det D\varphi|(x) \hskip5mm 
    \text{ for all } x\in \omegain.
\end{equation}
We define
\begin{equation}
    \tilde\contbeta(x)
    :=
    \begin{cases}
        \contbeta(x), & \text{ if } x\in \omegaout,\\
        D\varphi^T(x)\contbeta(\varphi(x)), & \text{ if } x \in \omegain.
    \end{cases}
\end{equation}
By the same computation as in Lemma \ref{lemmaextendbetacyl}, $\curl\tilde\contbeta=0$ in $B_{2}\setminus \cup_i [0,2)v_i$.

We compute as in \eqref{eqhatbetaomegain}, and using \eqref{eqdphi2detdphi}, $p\le 2$ and $|D\varphi|(x)\ge 1$ pointwise,  we have
\begin{equation}
\begin{split}
    \int_\omegain |\tilde\contbeta|^p dx 
    & \le  
    \int_\omegain |D\varphi|^p|\contbeta\circ\varphi|^p dx
    \\ & \le 
    \int_\omegain 3|\det D\varphi(x)|\,  |\contbeta(\varphi(x))|^p dx
    = 
    \int_\omegaout 3 |\contbeta(y)|^p dy ,
\end{split}
\end{equation}
which concludes the proof of \eqref{eqlemmaextendbetaball1}.

\emph{Step 2. Proof of \eqref{eqlemmaextendbetaballsym}.}
By scaling, it suffices to consider the case $\rho=1$. As $p\le 3/2$, by the rigidity estimate of Proposition \ref{proprigiditycras}  there is $S\in\R^{3\times 3}_\skw$ such that
\begin{equation*}
    \|\contbeta-S\|_{L^{p}(\omegaout)}
    \le c 
    \|\contbeta+\contbeta^T\|_{L^{p}(\omegaout)} 
    + 
    c |\curl \contbeta|(B_2\setminus B_1).
\end{equation*}
By Step 1, we can obtain an extension $\tilde\contbeta$ of $\contbeta-S$ such that
\begin{equation*}
    \|\tilde\contbeta\|_{L^p(\omegain)}
    \le
    c\|\contbeta-S\|_{L^{p}(\omegaout)}.
\end{equation*}
The map $\hat\contbeta:=S+\tilde \contbeta$ satisfies $|\hat\contbeta + \hat\contbeta^T| = |\tilde\contbeta+\tilde\contbeta^T|$ pointwise and, therefore,   \eqref{eqlemmaextendbetaballsym}.

The proof of \eqref{eqlemmaextendbetaballsym0} is analogous, using Korn's inequality instead of Proposition~\ref{proprigiditycras}.
  
\emph{Step 3. Structure of $\curl\hat\beta$.} Since $\hat\beta\in L^1(B_2;\R^{3\times 3})$ and $\curl\hat\beta=0$ on $B_2\setminus\cup_i [0,2)v_i$, by Lemma~\ref{lemmacurll1h1} in $(\mathcal D(B_2\setminus \{0\};\R^{3\times 3}))'$ we obtain $\curl \hat \beta|_{B_2\setminus\{0\}}=\sum_i \theta_i\otimes v_i\calH^1\LL(0,2)v_i$. It remains to deal with the origin. Fix a test function $\eta\in C^\infty_c(B_2;\R^{3\times 3})$ and, for any $r\in (0,1/2)$, select $h_r\in C^\infty_c(B_{2r})$ such that $h_r=1$ on $B_r$ and $|Dh_r|\le 2/r$.  We write
\begin{equation}\label{eqhatbetaeta}
\begin{split}
    \int_{B_2} \hat\beta\curl\eta \,dx 
    = &
    \int_{B_2\setminus B_r} \hat\beta\curl((1-h_r)\eta) dx  
    +
    \int_{B_{2r}} \hat\beta\curl(h_r\eta) dx  
    \\ = &
    -\sum_i \theta_i\otimes v_i  
    \int_{(0,2)v_i} (1-h_r)\eta \, d\calH^1  
    +
    \int_{B_{2r}} \hat\beta\curl(h_r\eta) dx 
\end{split}
\end{equation}
and estimate the last term using
\begin{equation}
    \int_{B_{2r}} |\hat\beta| \, |\curl(h_r\eta)| dx  
    \le 
    \left(\frac{c}{r}\|\eta\|_{L^\infty}
    +
    {c}\|D\eta\|_{L^\infty}\right)
    \int_{B_{2r}} |\hat\beta| dx  
    \le 
    C_\eta\|\hat\beta\|_{L^{3/2}(B_{2r})},
\end{equation}
where $C_\eta$ depends on $\eta$ but not on $r$. Taking the limit $r\to0$ in \eqref{eqhatbetaeta} leads to
\begin{equation}
\begin{split}
    \int_{B_2} \hat\beta\curl\eta \,dx 
    = &
    -\sum_i \theta_i\otimes v_i  \int_{(0,2)v_i} \eta \, d\calH^1  
\end{split}
\end{equation}
for every $\eta\in C^\infty_c(B_2;\R^{3\times 3})$, as desired.
\end{proof}

\section{Notation}
\parindent0pt

\emph{Discrete structure}

$\calL$ is a lattice, see Definition~\ref{deflattice};

$\calN\subseteq\calL$ is a set of bonds, see Definition~\ref{deflattice};

$\calC\subseteq\calL$ is a cluster, see Definition~\ref{deflattice};

$a_1,\dots, a_n$ are a basis for $\calL$, see Definition~\ref{def-tetrahedra} and Definition~\ref{deflattice};

$T\subset\R^n$ is a lattice simplex, see Definition \ref{def-tetrahedra};

$T_*:=\cup_i T_i$ is a unit cell of $\calL$, see Definition \ref{def-tetrahedra};

$T_i$ are the elementary tetrahedra defined in Definition \ref{def-tetrahedra};

$\vertici(T)$ is the set of vertices of the simplex $T$, see Definition~\ref{defdepsu};

$\edges(T)$ is the set of edges of the simplex $T$, see Definition~\ref{defdepsu};

$\calC_\calN$ set of bonds inside a cluster; see Definition~\ref{defcluster};

$D_\calC$ set of deformations on $\calC_\calN$; see Definition~\ref{defcluster};

$\bonds_\eps^\Omega$ is the set of bonds in $\Omega$, see Definition~\ref{defdepsu};

$\tetr_\eps^\Omega$ is the set of lattice simplices in $\Omega$, see Definition~\ref{defdepsu};

$ \clust_\eps^\Omega $ are the centers of the clusters contained in $\Omega$, see~\eqref{eqdefclustepsomega};

$(\burgersbi{l},m_l)$, $l=1,\dots, N_s$, are slip systems, see Definition~\ref{defslipsys};

complete set of slip systems, see Definition~\ref{defbetapl2};

$\calL^*$ denotes the dual lattice, see Definition~\ref{defslipsys};

$\calB\subseteq\calL$ is the lattice of possible  Burgers vectors, see Definition~\ref{defslipsys}.

\medskip\emph{Elastic kinematics}

$u:\eps\calL\cap\Omega\to\R^n$ is the displacement field, see Definition~\ref{eqdefduxi} and Definition~\ref{defdepsu};

$du:\bonds_1^\Omega\to\R^n$ is a discrete deformation gradient, see Definition~\ref{eqdefduxi};

$d_\eps u:\bonds_\eps^\Omega\to\R^n$ is a scaled discrete displacement gradient, see Definition~\ref{defdepsu} and Remark~\ref{remdepsucont}; 

$\xi$ admissible deformation, see Definition~\ref{eqdefduxi}, \eqref{eqbetacompatible1} and \eqref{eqbetacompatible};

$\xi^F$ admissible deformation corresponding to $F\in\R^{n\times n}$, see Definition~\ref{defcluster};

$L\discbeta:\Omega\to\R^{n\times n}$ interpolation of discrete deformation,
see Definition~\ref{defLbeta};

$\conteta$ is the  continuum curl-free strain.

\medskip\emph{Plastic kinematics}

$\discxi$ are the discrete slip parameters;

$\discbetapl:\bonds_\eps^\Omega\to\eps\calB$ is a discrete plastic strain, 
see Definition~\ref{defbetapl};

$P=(x_0, x_1, \dots, x_K)$ $\eps$-discrete path, see Definition~\ref{definitioncurlfree};

$P$ closed and/or elementary path, see Definition~\ref{definitioncurlfree};

$\oplus$ concatenation of paths, see Definition~\ref{definitioncurlfree};

$\circu(\xi,P)$  over $P$, see Definition~\ref{definitioncurlfree};

$\discbeta:\bonds_\eps^\omega\to\R^n$ exact, see Definition~\ref{definitioncurlfree};

$(\mu,\beta)$ $\rho$-compatible, see Definition~\ref{defbetamucompatible};

$\Ce(\discbeta,\Omega)$ or $\Ce(\discbeta)$  is the core region, see Definition~\ref{definitioncore}.

\medskip
\emph{Dilute dislocations}

$\Ade(\Omega)$ and $\Ade(\Omega,k_\eps,\alpha_\eps,m)$ see Definition \ref{def-Ade} and \eqref{eq-Ade-short};

$P(\Omega, k,\alpha)$ set of $(k,\alpha)$ dilute curves in $\Omega$, see Definition \ref{defdiluteness-curve};

$\calM^1(\Omega)$ and $\calM_{\calL'}^1(\Omega)$ are divergence-free matrix-valued measures and their $\calL'$-valued, counterpart, see text before Definition~\ref{defM1Lcont}; 

$\calM_{\calL'}^1(\Omega,k,\alpha)$ dilute dislocation measures, see Definition~\ref{defM1Lcont}.

\medskip
\emph{Energies}

$E_\calC$ energy on a cluster, see Definition~\ref{defcluster};

$E^0_\calC$ canonical lower bound for $E_\calC$,  see Definition~\ref{defcluster};

$\C$ matrix of elastic constants, see Definition~\ref{defcluster} and~\eqref{eqpropC};

$E_\eps[\discbeta_\eps, \Omega]$ total energy in $\Omega$, see~\eqref{eqdeftaux};
 
$\tau^x_\eps$ translation operator, see~\eqref{eqdeftaux}.

\medskip\emph{Interpolation and extension}
 
$I_\eps u$ is  the piecewise affine interpolation, see Definition~\ref{definterpolation};

$J_\eps v$ is  the piecewise constant interpolation, see Definition~\ref{definterpolation};

$\psi_\delta$ and $\psi_\delta^\eps$, continuous and discrete mollifiers, see Equation~\ref{eqpsideltamoll} and Equation~\ref{eqdefpsideltaeps};

$E\mu$ and $\beta_\mu$, extension of a dislocation measure and corresponding strain field, see Equation~\eqref{eqdefbetamu}.

\medskip\emph{Sets}

$\Omega_r:=\{x\in\Omega: \dist(x,\partial\Omega)>r\}$;

$\Omega_r(\gamma):=\{x\in\Omega: \dist(x,\gamma\cup \partial\Omega)>r\}$;

$B_r(E):=(E)_r:=\{x: \dist(x,E)<r\}$.

\medskip\emph{Constants}

$\ccorerad$ factor for the reduction to elementary paths, Lemma~\ref{lemmaccorerad}, $k_*\ge d_\calC$;

$c_{ext}$ is the constant in the extension Proposition \ref{propextendcylincircles};

$d_{T_*}$ is twice the diameter of $T_*$, see \eqref{eq-def-dT}, $d_{T_*}\le d_\calC$;

$d_\calC$ is twice the diameter of $\calC$, see \eqref{eq-def-dC}, $d_{T_*}\le d_\calC\le k_*$;

$m$ is the radius of the core in Definition~\ref{def-Ade}, $m\ge k_*$.

\medskip\emph{Other}

$Df$ denotes the weak gradient of a function $f\in W^{1,1}_\loc$ and the distributional derivative of a function $f\in BV_\loc$.

\section*{Acknowledgements}

This work was partially funded by the Deutsche Forschungsgemeinschaft (DFG, German Research Foundation) {\sl via} project 211504053 - SFB 1060, and project 390685813, GZ 2047/1.

\addcontentsline{toc}{section}{References}

\bibliographystyle{alpha-noname}
\bibliography{cogaor}

\end{document}